\newcommand{\Wip}{\mathrm{A}_+^1}
\newcommand{\vanish}[1]{\relax}
\newcommand{\N}{\mathbb{N}}
\newcommand{\R}{\mathbb{R}}
\newcommand{\C}{\mathbb{C}}
\newcommand{\eM}{\mathrm{M}}
\newcommand{\Lap}{\mathcal{L}}
\newcommand{\Sum}[2][\relax]{%
 \ifx#1\relax \sideset{}{_{#2}}\sum
 \else \sideset{}{^{#1}_{#2}}\sum
 \fi}
\newcommand{\abs}[1]{\left| #1 \right|}
\renewcommand{\abs}[1]{\left\vert#1\right\vert}
\newcommand{\Ce}{\mathrm{C}}
\newcommand{\tpfeil}{\longmapsto}
\DeclareMathOperator{\dom}{dom}
\DeclareMathOperator{\ran}{ran}
\newcommand{\cls}[1]{\overline{#1}}
\DeclareMathOperator{\Lin}{\mathcal{L}}
\newcommand{\norm}[2][\relax]{%
   %\ensuremath{\left\Vert{#2}\right\Vert_{#1}}}
   \ifx#1\relax \ensuremath{\left\Vert#2\right\Vert}
   \else \ensuremath{\left\Vert#2\right\Vert_{#1}}
   \fi}
\newcommand{\sprod}[2]{\ensuremath{%
  \setbox0=\hbox{\ensuremath{#2}}
  \dimen@\ht0
  \advance\dimen@ by \dp0
  \left(\left.#1\rule[-\dp0]{0pt}{\dimen@}\,\right|#2\hspace{1pt}\right)}}
\newcounter{aufzi}
\newcounter{aufzii}
\newcounter{aufziii}
 \newtheorem{thm}{Theorem}[section]
 \newtheorem{cor}[thm]{Corollary}
 \newtheorem{lemma}[thm]{Lemma}
 \newtheorem{prop}[thm]{Proposition}
 \theoremstyle{definition}
 \theoremstyle{remark}
 \newtheorem{rem}[thm]{Remark}
\newtheorem{example}[thm]{Example}
\newtheorem{remark}[thm]{Remark}
\numberwithin{equation}{section}
\numberwithin{equation}{section} \numberwithin{theorem}{section}
\begin{document}

\title[
An approach to approximation theory of semigroups
 ]
{
A general approach to approximation theory of operator semigroups
}

\author{Alexander Gomilko}
\address{Faculty of Mathematics and Computer Science\\
Nicolas Copernicus University\\
ul. Chopina 12/18\\
87-100 Toru\'n, Poland
}

\email{alex@gomilko.com}

  %%optional:  \curaddr{current address}%
   %%optional:  \urladdr{website address}%

%----------Author 2

\author{Sylwia Kosowicz}
\address{Faculty of Mathematics and Computer Science\\
Nicolas Copernicus University\\
ul. Chopina 12/18\\
87-100 Toru\'n, Poland
}

\email{sylwiak@mat.umk.pl}

%----------Author 3
\author{Yuri Tomilov}
\address{
Institute of Mathematics\\
Polish Academy of Sciences\\
\'Sniadeckich 8\\
00-956 Warsaw, Poland
}

\email{ytomilov@impan.pl}

\thanks{This work was completed with the support of the NCN grant
 2014/13/B/ST1/03153.
 It was also partially supported by the  grant 346300 for IMPAN from the Simons Foundation and the matching 2015-2019 Polish MNiSW fund.}

%----------classification, keywords, date
\subjclass{Primary 47A60, 65J08, 47D03; Secondary 46N40, 65M12}

\keywords{functional calculus, approximation rates, $C_0$-semigroups, completely monotone functions}

\date{\today}

\begin{abstract}
By extending  ideas from \cite{GTJFA}, we develop a general, functional calculus approach to approximation
of
%bounded
$C_0$-semigroups
on Banach spaces
by  bounded completely monotone functions of their generators.
The approach comprises  most of well-known approximation formulas,
yields optimal convergence rates, and sometimes even leads to sharp constants.
In an important particular case when semigroups are holomorphic, we are able
to significantly improve our results for general semigroups.
Moreover, we present several second order approximation formulas with rates,
which in such a general form appear in the literature for the first time.

\end{abstract}

\maketitle

\section{Introduction}
The approximation formulas for  $C_0$-semigroups
play an important role in the study of PDEs and their numerical analysis. They are also
indispensable in probability theory and in the theory of approximation of functions,
%They are also very
and helpful in the study of semigroups themselves.
Basic results on semigroup approximation
%with different accents
can be found
 e.g. in \cite[Chapters III.4, III.5]{EngNag}.
While some  earlier developments are thoroughly described in \cite{Butzer} and \cite{Kupcov},
it seems there's no a modern and comprehensive account on approximation of $C_0$-semigroups and related matters.
Some aspects of applications of semigroup approximations, although different from the setting of this paper,
 are presented in \cite{Altomare1} and \cite{Altomare}.

For quite a long time, the approximation theory of $C_0$-semigroups existed
as a number of distant formulas, and each of the formulas required a separate theory. A  typical illustration  of that phenomena
 is the following famous Yosida's formula with rates, a subject of several papers
 (see e.g. \cite{GTJFA} for a relevant discussion).
%It was shown, in particular,
Recall that if  $-A$ is the generator of a bounded $C_0$-semigroup
$(e^{-tA})_{t \ge 0}$ on a Banach space $X,$ and  $\alpha \in
(0,2],$ then for all $x\in \dom(A^\alpha),\quad t>0,$ and $n \in
\mathbb N,$
\[
\|e^{-n t A(n+A)^{-1}}x - e^{-tA}x\| \le 16 M
\left(\frac{t}{n}\right)^{\alpha/2} \|A^\alpha x\|,
\]
where $M:=\sup_{t \ge0} \|e^{-tA}\|.$ Moreover, if $(e^{-tA})_{t \ge 0}$ is holomorphic and sectorially bounded,  then
for all $\alpha \in [0,1],$ $x\in \dom(A^\alpha),\quad t>0,$ and $n \in
\mathbb N,$
\[
\|e^{-n t A(n+A)^{-1}}x - e^{-tA}x\| \le C
(nt^{1-\alpha})^{-1} \|A^\alpha x\|,
\]
where  $C >0$ is an absolute constant.
Replacing Yosida's approximating family $(e^{-n t A(n+A)^{-1}})_{n \ge 1}$ by another family of functions of $A,$ e.g. by Euler's approximation $((1+ tA/n)^{-n})_{n \ge 1}$,
one can usually get a statement of a similar nature. However technical details are often annoyingly different,
  and it is natural to ask whether there is a way
 to unify all of those separate considerations.
One of the aims of this paper is to show that the functional calculi theory offers
a general and fruitful point of view at semigroup approximations.

 A  functional calculus approach to approximation of $C_0$-semigroups,
 with an emphasis on rational approximations, has been initiated
 in the ground-laying papers  \cite{BT79} and
\cite{HK79}, and developed further in many subsequent articles.
For comparatively recent contributions to this area, based on functional calculus ideology,
 see  e.g. \cite{EgRo},  \cite{Hassan}, \cite{Flory}, \cite{Grimm}, \cite{JaNe12}, \cite{Kovacs07} and \cite{Ne13}.
 %, where again
 %mostly rational approximations have been treated.
% On the other hand, classical formulas like Euler's formula or Yosida's formula
%Still a number of crucial issues, including e.g. optimal convergence rates,
%were left open. So case studies of classical formulas, e.g. Euler's formula, continued until very recent times.
A fuller discussion with some other relevant references can be found in \cite{GTJFA}.
%\vspace{0,5cm}
%https://www.youtube.com/watch?v=PGNiXGX2nLU
%https://www.youtube.com/watch?v=l48hwpdOisw
%https://mathoverflow.net/questions/273725/contraction-semigroup-on-hilbert-space
%\vspace{0,5cm}

%It is instructive to note that
At the same time, using ideas from probability
theory and probabilistic representation formulas, a unifying approach to  approximation theorems  for $C_0$-semigroups
 was proposed  by Butzer and Hahn \cite{Butzer} and  Chung \cite{Chung}.
Somewhat closed (but avoiding an explicit probability language) studies of Dunford-Segal (or Hille) and Yosida's formulas
were done by Ditzian, \cite{D69}-\cite{D71}.
The latter papers contained also a thorough discussion of convergence rates.
A comprehensive probabilistic approach has been further developed
 in a series of papers by Pfeifer, \cite{Pf84a}-\cite{Pf93}, see also \cite{Fang}.
Pfeifer combined a number of well-known approximation formulas in terms of several probabilistic ones
and derived  the corresponding approximation  rates (optimal according to \cite{D71} and \cite{Fang}).
More comments on his approach will be given below.

In this paper, we take a different route, and
elaborate further the functional calculus techniques for approximation
of $C_0$-semigroups on Banach spaces.
Being unaware of Pfeifer's work, in \cite{GTJFA},
the first and the third author put
several well-known approximation formulas for $C_0$-semigroups
into the framework  of exponentials of (negative) Bernstein functions and the Hille-Phillips functional calculus.
Recall that by Bernstein's theorem a nonnegative  function $\varphi$ on $[0,\infty)$ is  Bernstein if and only if there exists a weak star continuous convolution semigroup of Borel measures $(\mu_t)_{t \ge 0}$
on $[0,\infty)$ such that
\begin{equation}\label{bernst}
e^{-t\varphi(z)}=\int_{0}^{\infty}e^{-sz}\, \mu_t(ds), \qquad t \ge 0.
\end{equation}
 for all $z \ge 0.$ It is well-known that if $\varphi$ is Bernstein, then $\varphi$ extends analytically to the (open) right half-pane $\mathbb C_+$ and continuously to $\overline{\mathbb C}_+,$ so that \eqref{bernst} holds in fact for all $z \in \overline{\mathbb C}_+.$
 with ${\rm Re} \, z\ge 0.$ One of the basic observations
in  \cite{GTJFA} is that many approximation formulas for $C_0$-semigroups
follow from either of  asymptotic relations
\begin{align}
e^{-tz} - e^{-n\varphi(tz/n)}\to 0,&\qquad n \to \infty, \label{AsB}\\
e^{-tz} - e^{-nt\varphi(z/n)} \to 0,& \qquad n\to \infty, \label{AsB1}
\end{align}
for all $t>0$ and ${\rm Re}\,z \ge  0$,
with $\varphi$ being  a  Bernstein function such that
\[
\varphi(0)=0,\qquad \varphi'(0)=1,\qquad|\varphi''(0)|<\infty.
\]
In particular, Euler's approximation
corresponds to
$\varphi(z)=\log(1+z)$ in \eqref{AsB}, while  Yosida's formula arises from \eqref{AsB1}
with $\varphi(z)=z/(z+1)$.
Along this way, apart from proposing a general view on semigroup approximation,
we equipped the approximation formulas with optimal  convergence rates depending on a ``generating function'' $\varphi,$
and covered in this way a number of known partial cases.
For more details of these developments we refer to \cite{GTJFA}.

Note however that (\ref{AsB}) and (\ref{AsB1}) can be put into a yet more general
and apparently more natural form
\begin{equation}\label{AsB0}
g^n_t (tz/n)-e^{-tz}\to 0,\qquad n\to \infty,
\end{equation}
where $g_t$ is defined by either $g_t=e^{-\varphi}$
or $g_t= e^{-t \varphi(\cdot/t)}, t>0.$
Moreover, observe that
 each of the functions $e^{-t\varphi}, t>0,$ is  bounded and completely monotone, that is
 a Laplace transform  of a positive bounded Borel measure $\mu_t$ on $[0,\infty).$
Thus it is natural to study \eqref{AsB0} for a family $(g_t)_{t > 0}$
of  bounded completely monotone functions  satisfying
\begin{equation}\label{gnorm}
g_t(0)=1,\qquad g_t'(0)=-1,\qquad g_t''(0)<\infty, \qquad t > 0,
\end{equation}
and not necessarily having a semigroup property with respect to $t.$
(The assumption $t \in (0,\infty)$ is not important and can be replaced by e.g. $t \in (0,a], a >0.$)
The formula \eqref{AsB0}
%considered for the \emph{whole} class of bounded completely monotone functions $g$
is a starting point of the paper.
It can also be viewed as a kind of so-called Chernoff's product formula frequently used in the study of $C_0$-semigroups,  e.g.
in \cite{Chernoff}, \cite{Chernoff1}, \cite{Chorin},  and \cite{Pf93}.
Note that there is a vast literature on Chernoff type approximation formulas and their rates,
see e.g. \cite{CaZa01}, \cite{CZ}, and \cite{Zagrebnov}. Unfortunately, our methodology does not apply in this setting in view of its  ``noncommutative'' nature.

To justify the framework of this paper, observe that the class of bounded completely monotone functions is considerably larger than
the class $\mathcal E=\{e^{-\varphi}: \varphi \,\, \text{is Bernstein}\},$ basic for \cite{GTJFA}.
For example, it is clear that if (a continuous extension of) a bounded completely monotone function $g$ has zeros on the imaginary axis, then
 $g \not \in \mathcal E.$
 %cannot be represented as an exponent of a negative of Bernstein function.
 Another less evident condition for $g \not \in \mathcal E$ is
 that $g$ is a Laplace transform of a bounded positive measure on $[0,\infty)$ with compact support,
  % Then
 %$g$ cannot be represented as an exponent of Bernstein function as well,
 see e.g \cite[Lemma 7.5 and Corollary 24.4]{Sato}.
In general, measures $\mu$ giving rise to functions from $\mathcal E$ are called \emph{infinitely divisible} since
according to \eqref{bernst} for any such $\mu$ there exists a weak star continuous convolution semigroup
of Borel measures $(\mu_t)_{t \ge 0}$ satisfying $\mu_1=\mu,$ so that $\mu=(\mu_{1/n})^n$ for every $n \in \N.$
For more properties of this class as well as for more examples of measures that are not infinitely-divisible we refer to
\cite[Chapter 5]{SchilSonVon2010} and \cite[Chapters 2,5 and 6]{Sato}.

Thus, relying on the functional calculi ideas
from \cite{GTJFA} and the
approximation relation (\ref{AsB0}),
we develop further a comprehensive approach to the approximation theory
for bounded $C_0$-semigroups on Banach spaces. The approach allows one to unify
most of known approximation formulas and
equip them with optimal convergence rates, thus putting many results
from \cite{GTJFA} into their final form. Moreover, it leads to
 second order approximation formulas with optimal rates, and  approximation results
for holomorphic semigroups with sharp constants.
The heart matter of our considerations is positivity of measures
involved in various estimates, so the class of bounded completely monotone functions (or, equivalently, of Laplace transforms of positive bounded
Borel measures) seems to provide natural limits for the techniques of this paper (and also of \cite{GTJFA}).

Positivity aspects were also crucial in Pfeifer's studies mentioned above.
To give a flavor of his results we need to digress into the probabilistic terminology.
 Let $(N(\tau))_{\tau \ge 0}$ be a family of non-negative integer-valued
random variables with $E(N(\tau)) = \tau \xi$ for some $\xi > 0$ such that
$\sigma^2(N(\tau)) = o(\tau^2)$ for $\tau \to \infty$, where $E$ and $\sigma^2$ stand for expectation and variance, respectively.
%Then $(l/\tau) N(\tau)$ converges in probability to $\xi$ as
%$\tau \to \infty.$
If  $Y$ be a non-negative random variable with $E(Y) = \gamma$ such
that $\sigma^2(Y)$ exists, then, by \cite{Pf85}, for any bounded $C_0$-semigroup $(e^{-tA})_{t \ge 0}$ on $X$ and any $x \in X,$
\begin{equation}\label{P1}
e^{-tA} x = \lim_{\tau \to \infty} \psi_{N(\tau)} (E[e^{-YA/\tau}]) x
\end{equation}
in the strong sense, where $\psi_{N(\tau)}=E(t^N(\tau)), t\ge 0,$ and $t=\xi \gamma.$  Moreover, for $x \in {\rm dom}(A)$  the rate of convergence
in \eqref{P1}
can be estimated in terms of $\sigma (Y), \sigma (N(\tau))$ and $\|A x\|,$ and the same is true for
$x \in {\rm dom}(A^2)$ modulo a replacement of $\|A x\|$ by $\|A^2 x\|.$
Similarly, if $N$ is a non-negative integer-valued random variable
with $E(N) = \xi$ and $Y \ge 0$ is a real-valued random variable with $E(Y) = \gamma,$
then for $t=\xi \gamma$ one has
\begin{equation}\label{P2}
e^{-tA}x=\lim_{n \to \infty} \{\psi_N (E[e^{-AY/n}])\}^nx,
\end{equation}
again with the convergence rate estimates.
Specifying  $N$ in \eqref{P1} and \eqref{P2} (and similar asymptotic relations in \cite{Pf85}), one gets a number of approximation formulas with rates
that appear to be sharp by \cite{Fang}. For instance, choosing $N$ to be a binomial distribution in \eqref{P2}, one gets
so-called Kendall's approximation formula:
\[
\left \|\left((1-t)I + t e^{-A/n}\right)^n x - e^{-tA}x \right\|\le \frac{t(1-t)M}{2n} \|A^2 x\|
\]
for all $x   \in {\rm dom}(A^2), t \in (0,1),$ and integers $n >(t(1-t))^{-1}-6.$
For more details and explanations on probabilistic methodology we refer
to Pfeifer's papers and also to \cite{Butzer} and \cite{Chung}.
It is curious to note that in  \cite{Pf93}  Pfeifer gives an operator-probabilistic counterpart of the ``product formula''
\eqref{AsB0}. However he did not develop that result any further.
All positivity aspects are somewhat hidden in his arguments,
%in the approach based on limit theorems from probability theory,
but they become transparent when
the expectations and other relevant quantities are written explicitly in terms of the distribution functions.

While the probabilistic approach
%appeared to be quite fruitful and
allowed one to
encompass a number of approximation formulas similarly to our studies,
it has also certain shortcomings when compared to functional calculi machinery.
In particular, it does not take into account the specifics of holomorphic semigroups.
%and moreover  obtaining of higher order approximations formulas seems to be out of reach.
%is by no means straightforward.
Moreover, given a bounded complete function $g,$ functional calculus methods lead to explicit approximation formulas with convergence rates in terms of $g$ and its  derivatives at zero.
The probabilistic arguments are  somewhat less transparent since
they depend on a choice of auxiliary parameters (e.g. the distribution of $N$ as above),
thus requiring certain invention.
Finally,
 our framework includes the approximations $(g^n_t(\cdot/n))_{n \ge 1}$ when $g_t''(0)$ may not be finite, and the latter falls outside the scope
of limit theorems  in \cite{Pf85}  depending on  finite variances and thus assuming implicitly  $g_t''(0)< \infty.$
One should note that in fact one can often read off  approximations such as \eqref{AsB0} from probabilistic formulas similar to
\eqref{P1} and \eqref{P2}.
However that fact apparently escaped the attention of experts,
and the emphasis has been put on purely probabilistic techniques.
Overall, from a practical point of view, the two approaches can probably be considered as complementary rather than
covering each other.

Let us now review our main results. First we present  first order approximation estimates with rates,
comprising a number of known approximation schemes for semigroups into a single formula
with an optimal convergence rate. It is remarkable that the estimates are given in explicit a priori terms.
 The next result is a generalization of
\cite[Theorem 1.3]{GTJFA} to the setting of bounded completely monotone functions.

\begin{thm}\label{1Th0I}
Let $-A$ be the generator of a bounded $C_0$-semigroup $(e^{-tA})_{t\ge 0}$ on $X$, and let
 $(g_t)_{t >0}$ be a family of completely monotone functions such that for every $t> 0,$
\begin{equation}\label{ggg}
g_t(0)=1,\qquad g_t'(0)=-1, \quad \text{and} \quad g_t''(0)<\infty.
\end{equation}
Let  $M:=\sup\limits_{t\ge 0} \|e^{-tA}\|$.
Then for all  $n \in \mathbb{N}$ and $t>0,$
\begin{equation*}\label{RowA00I}
\| g_t^n(tA/n) x-e^{-tA}x\| \le M
\frac{(g_t''(0)-1 )}{2}\frac{t^2}{n} \|A^2 x\|,\qquad
x\in \dom(A^2),
\end{equation*}
\begin{equation*}\label{RowA001I}
\|g_t^n(tA/n) x-e^{-tA}x\| \le M
(g_t''(0)-1 )^{1/2}\frac{t}{\sqrt{n}} \|A x\|,\qquad
x\in \dom(A),
\end{equation*}
and, if $\alpha \in (0,2),$
%for all  $n \in \mathbb{N},$ $t>0,$ and $\alpha \in (0,2),$
\begin{equation*}\label{RowA011I}
\| g_t^n(tA/n) x-e^{-tA}x\| \le 4M \left((g_t''(0)-1)
\frac{t^2}{n}\right)^\frac{\alpha}{2} \|A^\alpha x\|,\qquad
x\in \dom(A^\alpha).
\end{equation*}
\end{thm}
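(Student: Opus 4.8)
The plan is to realize the approximant $g_t^n(tA/n)$ as a Bochner integral of the semigroup against an explicit probability measure, and then to exploit the cancellation of its low-order moments. Since each $g_t$ is bounded and completely monotone with $g_t(0)=1$, Bernstein's theorem furnishes a probability measure $\mu_t$ on $[0,\infty)$ with $g_t(z)=\int_0^\infty e^{-zs}\,\mu_t(ds)$ for $\re z\ge0$; differentiating at $0$ and invoking \eqref{ggg} gives $\int_0^\infty s\,\mu_t(ds)=1$ and $\int_0^\infty s^2\,\mu_t(ds)=g_t''(0)$, so $\mu_t$ has mean $1$ and variance $g_t''(0)-1\ge0$. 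Rescaling $s\mapsto ts/n$ and taking the $n$-fold convolution, the multiplicativity of the Hille--Phillips calculus yields $g_t^n(tA/n)=\int_0^\infty e^{-\tau A}\,\lambda_n(d\tau)$, where $\lambda_n$ is a probability measure on $[0,\infty)$ with mean $t$ and variance $\frac{t^2}{n}(g_t''(0)-1)$. Hence, writing $\omega_n:=\lambda_n-\delta_t$,
\[
g_t^n(tA/n)x-e^{-tA}x=\int_0^\infty e^{-\tau A}x\,\omega_n(d\tau),
\]
where the signed measure $\omega_n$ satisfies $\omega_n([0,\infty))=0$ and $\int_0^\infty(\tau-t)\,\omega_n(d\tau)=0$, while $\int_0^\infty(\tau-t)^2\,\lambda_n(d\tau)=\frac{t^2}{n}(g_t''(0)-1)$.

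The heart of the argument is to Taylor-expand the orbit $\tau\mapsto e^{-\tau A}x$ \emph{about the mean} $\tau=t$, so that the vanishing zeroth and first moments of $\omega_n$ annihilate the non-integral terms. For $x\in\dom(A^2)$ I would use
\[
e^{-\tau A}x=e^{-tA}x-(\tau-t)Ae^{-tA}x+\int_t^\tau(\tau-r)e^{-rA}A^2x\,dr,
\]
integrate against $\omega_n$, and discard the first two summands by the moment conditions. Bounding the remainder in norm by $M\|A^2x\|\,(\tau-t)^2/2$, the decisive point is that the weight $(\tau-t)^2$ vanishes at the atom of $\delta_t$, so $\int(\tau-t)^2\,d|\omega_n|\le\int(\tau-t)^2\,d\lambda_n=\frac{t^2}{n}(g_t''(0)-1)$; this keeps the variance intact with no spurious $t^2$ term and produces exactly the constant $\frac{M}{2}(g_t''(0)-1)\frac{t^2}{n}$. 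The $\dom(A)$ estimate is entirely parallel, using the first-order expansion $e^{-\tau A}x-e^{-tA}x=-\int_t^\tau e^{-rA}Ax\,dr$, bounding the remainder by $M\|Ax\|\,|\tau-t|$, noting $\int|\tau-t|\,d|\omega_n|\le\int|\tau-t|\,d\lambda_n$, and applying Cauchy--Schwarz, $\int|\tau-t|\,d\lambda_n\le\big(\int(\tau-t)^2\,d\lambda_n\big)^{1/2}=\frac{t}{\sqrt n}(g_t''(0)-1)^{1/2}$, for the probability measure $\lambda_n$.

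For the fractional range $\alpha\in(0,2)$ I would interpolate. Writing $\delta:=\frac{t^2}{n}(g_t''(0)-1)$ and $T:=g_t^n(tA/n)-e^{-tA}$, the two integer estimates read $\|Tx\|\le\tfrac{M}{2}\delta\,\|A^2x\|$ and $\|Tx\|\le M\delta^{1/2}\|Ax\|$. For $\alpha\in[1,2)$ these two homogeneous bounds combine through the moment inequality for fractional powers of the generator of a bounded semigroup, giving $\|Tx\|\le CM\delta^{\alpha/2}\|A^\alpha x\|$. For $\alpha\in(0,1)$ I would instead return to the representation $Tx=\int_0^\infty(e^{-\tau A}-e^{-tA})x\,\omega_n(d\tau)$ and use the fractional modulus-of-continuity estimate $\|(e^{-\tau A}-e^{-tA})x\|\le C_\alpha M\,|\tau-t|^\alpha\|A^\alpha x\|$ together with Jensen's inequality $\int|\tau-t|^\alpha\,d\lambda_n\le\big(\int(\tau-t)^2\,d\lambda_n\big)^{\alpha/2}=\delta^{\alpha/2}$, valid since $\alpha/2<1$ and $\lambda_n$ is a probability measure. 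Tracking the interpolation constants across both subranges yields the uniform bound $4M\delta^{\alpha/2}$.

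The main obstacle I anticipate is the bookkeeping of the \emph{signed} measure $\omega_n$: a naive estimate through $|\omega_n|$ replaces the variance by $\frac{t^2}{n}(g_t''(0)-1)+2t^2$ and destroys the sharp constant. Expanding about the mean is precisely what converts the total-variation integral into the genuine variance and salvages the optimal factors; the remaining technical points are verifying the moment identities for the convolution $\lambda_n$ from $g_t'(0),g_t''(0)$ and justifying the Fubini interchange between $\omega_n$ and the Bochner remainder integrals. In the fractional case the only delicate issue is securing the homogeneous seminorm $\|A^\alpha x\|$ (rather than the full graph norm) with a single factor $M$, which is exactly what the standard fractional-power and real-interpolation inequalities for generators of bounded semigroups provide.
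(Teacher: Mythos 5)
Your proposal is correct in outline but takes a genuinely different route from the paper. The paper never works with the representing measure of $g_t^n(t\cdot/n)$ directly: it estimates $\Wip(\C_+)$-norms of the difference functions $\Delta_\alpha^g(z)=(g(z)-e^{-z})/z^\alpha$ at the function level --- Lemma \ref{T1Dop} gives $\|\Delta_2^g\|_{\Wip(\C_+)}=(g''(0)-1)/2$ via an explicit nonnegative density $G$, Proposition \ref{CMcor} gives $\|\Delta_1^g\|_{\Wip(\C_+)}\le (g''(0)-1)^{1/2}$ by Cauchy--Schwarz, and Corollary \ref{CintC} interpolates in $\Wip(\C_+)$ via Lemma \ref{C+global} to cover all $\alpha\in[0,2]$ with constant $4$ --- and then applies the HP-calculus once (through a regularization $A+\delta$, $\delta\to 0+$, in Theorems \ref{Psimple1} and \ref{1Th}), paying a single factor of $M$, before scaling $A\mapsto tA/n$ with $g_n''(0)-1=(g''(0)-1)/n$ in Corollary \ref{1Th0}. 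Your argument instead realizes the approximant through the probability measure $\lambda_n$, Taylor-expands the orbit about the mean, and uses the vanishing moments of $\omega_n=\lambda_n-\delta_t$ plus Cauchy--Schwarz/Jensen. This is in substance the probabilistic (Pfeifer-style) argument that the paper itself acknowledges as parallel (\cite{Pf85}); notably, it is also exactly the mechanism the paper deploys at second order, where the proof of Theorem \ref{1Th+} expands $e^{-sA}-e^{-A}$ about $s=1$ against the density $G$. Your function-level bookkeeping and the paper's coincide ($\Delta_2^g=\Lap G$ with $\int_0^\infty G(s)\,ds=(g''(0)-1)/2$ is precisely your variance computation), and for the two integer cases your proof is complete and delivers the sharp constants; the paper's $\Wip$-route has the advantage of producing operator-free norm estimates reusable later (e.g.\ in the holomorphic section), while yours is more elementary for the integer cases.

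There is, however, a genuine flaw in the range $\alpha\in[1,2)$: ``the moment inequality'' cannot do what you ask of it. From your two endpoint bounds you get $\|Tx\|\le a^{2-\alpha}b^{\alpha-1}\|Ax\|^{2-\alpha}\|A^2x\|^{\alpha-1}$, and to conclude you would need $\|Ax\|^{2-\alpha}\|A^2x\|^{\alpha-1}\lesssim \|A^\alpha x\|$, which is the \emph{reverse} of the moment inequality and is false in general ($\|A^2x\|$ is not controlled by $\|A^\alpha x\|$ for $\alpha<2$). The standard repair at the operator level --- splitting $x=(1+\epsilon A)^{-1}x+\epsilon A(1+\epsilon A)^{-1}x$ with $\epsilon=\delta^{1/2}$ --- does work, but the bounds $\|A^{2-\alpha}(1+\epsilon A)^{-1}\|\le CM\epsilon^{\alpha-2}$ cost a second factor of $M$, yielding $CM^2\delta^{\alpha/2}$ rather than your claimed $4M\delta^{\alpha/2}$; this is exactly the loss the paper avoids by interpolating in $\Wip(\C_+)$, where no factor of $M$ is incurred. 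The clean fix stays inside your own scheme: for $\alpha=1+\beta$, $\beta\in(0,1)$, use the first-order expansion $e^{-\tau A}x-e^{-tA}x+(\tau-t)Ae^{-tA}x=-\int_t^\tau (e^{-rA}-e^{-tA})Ax\,dr$, estimate the integrand by $4M|r-t|^\beta\|A^{1+\beta}x\|$ (this fractional modulus bound, which you also need with an $M$-homogeneous constant for $\alpha\in(0,1)$, follows from Lemma \ref{C+global} applied to $F(z)=e^{-\tau z}-e^{-tz}$, since $\|F\|_{\Wip(\C_+)}\le 2$ and $\|z^{-1}F\|_{\Wip(\C_+)}=|\tau-t|$, giving constant $2^{1+\beta}2^{1-\beta}=4$), then cancel the zeroth and first moments of $\omega_n$ as before and apply Jensen, $\int|\tau-t|^{1+\beta}\,d\lambda_n\le\delta^{(1+\beta)/2}$; this yields $\frac{4M}{1+\beta}\,\delta^{\alpha/2}\|A^\alpha x\|\le 4M\delta^{\alpha/2}\|A^\alpha x\|$, recovering the theorem's constant across the whole range.
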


Moreover, using the framework of completely monotone functions (and, implicitly, positivity of the corresponding measures),
we are able to obtain second order approximation formulas, again with optimal rates.
For this kind of results higher smoothness of the generating functions $g_t$ is required.

\begin{thm}\label{1Th+CI}
Let $-A$ be the generator of a bounded $C_0$-semigroup $(e^{-tA})_{t\ge 0}$ on $X$, and let
$(g_t)_{t > 0}$ be family of completely monotone functions such that, in addition to \eqref{ggg},
one has
\begin{equation*}
|g_t'''(0)|<\infty \qquad \text{and} \qquad g_t''''(0) <\infty,
\end{equation*}
for every $t > 0.$ Let  $M:=\sup\limits_{t\ge 0} \|e^{-tA}\|.$
Then for all  $t>0$, $n\in \N,$ and $x\in \dom(A^3),$
\begin{align*}\label{RowA0++I}
\|  g_t^n(tA/n)x-e^{-tA}x-(2n)^{-1}(g_t''(0)-1)& t^2e^{-tA}A^2x\|\\
\le& M C(g_t) t^3 n^{-3/2}\|A^3x\|,
\end{align*}
where
\[
C(g_t)=\left(\frac{(g_t''(0)-1)(g_t''''(0)-1)}{2}\right)^{1/2}.
\]
Moreover,
%if $g\in \mathcal{B}_4$, then
for all   $x\in \dom(A^4),$ $t>0$ and $n\in N,$
\begin{align*}
\|  g_t^n(tA/n) x-e^{-tA}x-(2n)^{-1}&{(g_t''(0)-1)} t^2e^{-tA}A^2x\|\\
\le&
M C_1(g_t)n^{-2}t^3(\|A^3x\|
+t\|A^4x\|),
\end{align*}
where
\[
C_1(g_t)=g_t''''(0)-1.
\]
\end{thm}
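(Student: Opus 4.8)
The plan is to move everything into the Hille--Phillips calculus and to exploit that each $g_t$ is the Laplace transform of a positive measure. Writing $g_t(z)=\int_0^\infty e^{-sz}\,\mu_t(ds)$, conditions \eqref{ggg} say that $\mu_t$ is a probability measure on $[0,\infty)$ with mean $1$ and finite second moment $g_t''(0)$; the hypotheses $|g_t'''(0)|<\infty$ and $g_t''''(0)<\infty$ add finite third and fourth moments. Rescaling by $s\mapsto ts/n$ and using that products of Laplace transforms correspond to convolutions gives
\[
g_t^n(tA/n)=\int_0^\infty e^{-uA}\,\nu_n(du),
\]
where $\nu_n$ is the $n$-fold convolution of the pushforward of $\mu_t$ under $s\mapsto ts/n$. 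Thus $\nu_n$ is again a probability measure, which already yields the crucial uniform bound $\|g_t^n(tA/n)\|\le M$ (rather than $M^n$), and a direct computation gives $\int u\,\nu_n(du)=t$ and $\Var(\nu_n)=n^{-1}t^2(g_t''(0)-1)$. Since $e^{-tA}=\int_0^\infty e^{-uA}\,\delta_t(du)$, the entire estimate reduces to controlling $\int_0^\infty e^{-uA}x\,(\nu_n-\delta_t)(du)$ by the central moments of $\nu_n$.

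First I would Taylor expand $u\mapsto e^{-uA}x$ about $u=t$ with integral remainder, using $\tfrac{d^j}{du^j}e^{-uA}x=(-A)^je^{-uA}x$. Integrating against the signed measure $\nu_n-\delta_t$, whose zeroth and first moments vanish, kills the constant and linear terms, while the quadratic term reproduces \emph{exactly} the subtracted correction $(2n)^{-1}(g_t''(0)-1)t^2e^{-tA}A^2x$, precisely because $\int(u-t)^2\,\nu_n(du)=\Var(\nu_n)$. For $x\in\dom(A^3)$ I would stop the expansion at second order and bound the remainder by $\tfrac{M}{6}\|A^3x\|\int_0^\infty|u-t|^3\,\nu_n(du)$, then estimate the third absolute central moment by Cauchy--Schwarz,
\[
\int_0^\infty|u-t|^3\,\nu_n(du)\le\bigl(\Var\nu_n\bigr)^{1/2}\Bigl(\int_0^\infty(u-t)^4\,\nu_n(du)\Bigr)^{1/2}.
\]
For $x\in\dom(A^4)$ I would expand to third order instead: the cubic term survives as $-\tfrac16e^{-tA}A^3x\int(u-t)^3\,\nu_n(du)$ and the remainder is bounded by $\tfrac{M}{24}\|A^4x\|\int(u-t)^4\,\nu_n(du)$.

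The heart of the matter, and the step I expect to be the main obstacle, is to evaluate and bound the third and fourth central moments of the convolution $\nu_n$ \emph{solely} in terms of $g_t''(0)$ and $g_t''''(0)$. By additivity of cumulants over the $n$ independent factors one gets $\int(u-t)^3\,\nu_n=n^{-2}t^3\kappa_3$, and for the fourth moment the exact identity $\int(u-t)^4\,\nu_n=n^{-3}t^4\kappa_4+3n^{-2}(1-n^{-1})t^4(g_t''(0)-1)^2$, where $\kappa_3,\kappa_4$ denote the third and fourth central moments of the base measure $\mu_t$. Inserting these into the two bounds above produces the rates $n^{-3/2}$ and $n^{-2}$ at once. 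The delicacy is that $\kappa_3,\kappa_4$ involve the odd quantity $g_t'''(0)$, which the final constants $C(g_t)$ and $C_1(g_t)$ do not; this is removed by elementary inequalities that hold precisely because $\mu_t\ge0$ has mean $1$, namely $(g_t''(0)-1)^2\le g_t''''(0)-1$, $\kappa_4\le g_t''''(0)-1$, and $|\kappa_3|\le g_t''''(0)-1$ (the latter two via $\int s^3\mu_t(ds)\ge(g_t''(0))^2$ together with Cauchy--Schwarz). These give $\int(u-t)^4\,\nu_n\le3n^{-2}t^4(g_t''''(0)-1)$, and combining with $\Var(\nu_n)$ yields exactly the structure $\bigl((g_t''(0)-1)(g_t''''(0)-1)\bigr)^{1/2}$ and $g_t''''(0)-1$ of the stated constants. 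Positivity of $\mu_t$ is indispensable throughout: it provides the bound $M$ in place of $M^n$, justifies passing to absolute moments, and underlies the moment inequalities that eliminate the third-derivative dependence.
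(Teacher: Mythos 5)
Your proposal is correct, and the constants you obtain are in fact slightly better than the stated ones: your first bound comes out with factor $\tfrac{\sqrt{3}}{6}\bigl((g_t''(0)-1)(g_t''''(0)-1)\bigr)^{1/2}\le C(g_t)$, and in the second bound the coefficients $\tfrac16$ and $\tfrac18$ sit comfortably below $C_1(g_t)$. However, your route is organized differently from the paper's. The paper never forms the convolution power $\nu_n$ or invokes moment additivity: it first proves the unscaled estimates for a single $g\in\mathcal{B}_4$ (Theorem \ref{1Th+}) via the kernel representation $g(z)-e^{-z}=z^2\int_0^\infty e^{-zs}G(s)\,ds$ of Lemma \ref{T1Dop}, expanding $e^{-sA}A^2x$ about $s=1$ under the positive kernel $G(s)\,ds$ and controlling the error through the functionals $a[g]=\int G$, $b[g]=\int(1-s)G$, $d_0[g]=\int(1-s)^2G$ with Cauchy--Schwarz; the $n$-dependence then enters purely by scaling $g\mapsto g^n(\cdot/n)$, with $b[g_n]$, $d_0[g_n]$ computed by brute-force differentiation of $g^n(\cdot/n)$ at $0$ (Proposition \ref{simpleN}, Corollary \ref{Corgn}) and the third derivative eliminated via the log-convexity inequalities $[g^{(k+1)}]^2\le g^{(k)}g^{(k+2)}$ for completely monotone functions. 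Your argument is essentially the probabilistic one that the paper attributes to Pfeifer and presents as complementary: the two are tightly linked, since $G$ is exactly the doubly integrated bridge of $\nu-\delta_1$, so $a[g]=\sigma^2/2$, $b[g]=-\kappa_3/6$, $d_0[g]=\kappa_4/12$ in your notation, the scaling identities $b[g_n]=b[g]/n^2$ and the formula \eqref{CalA3} are your central-moment additivity identities in disguise, and your Lyapunov/Cauchy--Schwarz inequalities for $\mu_t$ (which I checked: $(g_t''(0)-1)^2\le\kappa_4\le g_t''''(0)-1$ via $m_3\ge m_2^2$, and $|\kappa_3|\le\sigma\kappa_4^{1/2}\le g_t''''(0)-1$ via $m_2\le m_4$) play the role of the paper's derivative inequalities. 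What your route buys is a shorter, self-contained derivation that bypasses Lemma \ref{T1Dop} and the Appendix computations, at the small cost of being careful that the fourth central moment is not literally cumulant-additive --- you correctly wrote the mixed identity with the $3n(n-1)\mu_2^2$ cross term, so there is no gap; what the paper's route buys is the kernel $G$ itself, which it reuses throughout (notably for the holomorphic-semigroup estimates of Section \ref{holomorph}, where the functionals $c_\alpha$ and $d_1$ are built from the same $G$).
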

Theorems \ref{1Th0I} and \ref{1Th+CI} should be compared to Theorems $4.1$ and $4.4$ in \cite{Pf85} where similar results were obtained
using  a different, probabilistic language.

It is of interest for applications to obtain sharper versions of the above results if the semigroup
$(e^{-tA})_{t\ge 0}$ is holomorphic. In fact, it is this particular class of semigroups that attracted  most of attention
in the literature on semigroup approximation. By adjusting our functional calculus arguments accordingly,
we obtain analogues of Theorems \ref{1Th0I} and \ref{1Th+CI} for sectorially bounded holomorphic semigroups with improved rates (and constants)
in this particular case.
%In particular, we obtain the versions of
%the first order approximation and the second order approximation formulas.
Here we formulate only one of these results and refer to Section \ref{holomorph} for other related statements.

\begin{thm}\label{tD3corI}
Let
$(g_t)_{t > 0}$ be as in Theorem \ref{1Th0I},
 let $-A$ be the generator of a sectorially bounded holomorphic $C_0$-semigroup $(e^{-tA})_{t \ge 0}$ on $X$,
and let for every $\beta \ge 0,$ $$M_\beta:=\sup_{t > 0} \|t^\beta A^\beta e^{-tA}\|.$$
Then for all $n\in \N$, and $t>0,$
\begin{equation*}\label{BB10ycI}
\|g_t^n(tA/n)x-e^{-tA}x\|
\le (2M_0+3M_1/2)\frac{(g_t''(0)-1)}{n}t\|Ax\|, \qquad x\in \dom(A).
\end{equation*}
Moreover, for all $n\in \N$ and $t>0,$
\begin{equation*}\label{BB30ycI}
\|g_t^n(tA/n)-e^{-tA}\|\le K \frac{(g_t''(0)-1)}{n},
\end{equation*}
and, if $\alpha \in (0,1),$
\[
\|g_t^n(tA/n)x-e^{-t A}x\|
\le 4K
\frac{(g_t''(0)-1)}{n}t^\alpha\|A^\alpha x\|, \quad x\in \dom(A^\alpha),
\]
where $K=3M_0+3M_1+M_2/2.$
\end{thm}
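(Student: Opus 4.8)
The plan is to exploit complete monotonicity to collapse the operator estimate to a scalar estimate against a single subordinating measure, and then to let holomorphic smoothing absorb the spurious powers of $A$. Since $g_t$ is completely monotone with representing measure $\mu_t$ satisfying $\int_0^\infty\mu_t(ds)=1$, $\int_0^\infty s\,\mu_t(ds)=1$ and $\int_0^\infty s^2\,\mu_t(ds)=g_t''(0)$, the function $z\mapsto g_t^n(tz/n)$ is again completely monotone, and its representing measure $\lambda_n$ is the $n$-fold convolution of the law of $(t/n)S$, $S\sim\mu_t$. Hence $\lambda_n$ is a probability measure on $[0,\infty)$ with mean $t$ and variance $\Var(\lambda_n)=t^2(g_t''(0)-1)/n$. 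By the Hille--Phillips calculus $g_t^n(tA/n)=\int_0^\infty e^{-sA}\,\lambda_n(ds)$, so that
\[
g_t^n(tA/n)-e^{-tA}=\int_0^\infty\left(e^{-sA}-e^{-tA}\right)\lambda_n(ds).
\]
Everything will be read off from this identity using only the two moment conditions on $\lambda_n$ together with the holomorphic smoothing bounds $\|A^\beta e^{-\tau A}\|\le M_\beta\tau^{-\beta}$.

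For the operator-norm estimate I would Taylor-expand $e^{-sA}$ to second order about $s=t$. The zeroth-order term integrates back to $e^{-tA}$ and cancels, while the first-order term $-(s-t)Ae^{-tA}$ integrates to zero because $\lambda_n$ has mean $t$; what survives is
\[
g_t^n(tA/n)-e^{-tA}=\int_0^\infty\int_t^s(s-\tau)\,A^2e^{-\tau A}\,d\tau\,\lambda_n(ds).
\]
On the bulk region $s\approx t$ one has $\tau\ge t/2$, so $\|A^2e^{-\tau A}\|\le M_2\tau^{-2}$ is comparable to $M_2t^{-2}$ and the inner integral has size $(s-t)^2/(2t^2)$; integrating against $\lambda_n$ replaces $(s-t)^2$ by $\Var(\lambda_n)$ and yields the leading term $\tfrac{M_2}{2}(g_t''(0)-1)/n$. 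This is exactly where holomorphy is indispensable and where the gain over Theorem~\ref{1Th0I} originates: the order-$1/n$ rate is the variance of the $n$-fold convolution $\lambda_n$, and the two surplus powers of $A$ are paid for by the smoothing of $e^{-\tau A}$.

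The main obstacle is the tail of $\lambda_n$, and in particular the region $s\to 0$, where the variable $\tau$ in the remainder runs down to $0$, the bound $\|A^2e^{-\tau A}\|\le M_2\tau^{-2}$ degenerates, and the clean second-order estimate fails. My remedy is to split $[0,\infty)$ about $s=t$ and to trade smoothing order for integrability: on the far tail I would discard the Taylor remainder and use the crude bound $\|e^{-sA}-e^{-tA}\|\le 2M_0$ together with the smallness of $\lambda_n(\{|s-t|>t/2\})$, which by Chebyshev's inequality is $O((g_t''(0)-1)/n)$, while on the intermediate range I would invoke the first-order smoothing governed by $M_1$. The delicate point is to balance these regions so that each contribution stays of order $(g_t''(0)-1)/n$ and they combine into the stated constant $K=3M_0+3M_1+M_2/2$; it is the optimisation of this split, rather than any single estimate, that pins down the precise constants.

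The estimate on $\dom(A)$ follows from the same identity after transferring one power of $A$ onto $x$: writing $e^{-sA}x-e^{-tA}x=-\int_t^s e^{-\tau A}Ax\,d\tau$ and again subtracting the mean-zero term $(s-t)e^{-tA}Ax$, one is left with $-\int_0^\infty\int_t^s(e^{-\tau A}-e^{-tA})Ax\,d\tau\,\lambda_n(ds)$, which requires only the first-order smoothing $M_1$ (the leading term being $\tfrac{M_1}{2}\,t(g_t''(0)-1)/n\,\|Ax\|$) plus the same tail analysis with $M_0$; this explains the absence of $M_2$ from the constant $2M_0+3M_1/2$. Finally, for $\alpha\in(0,1)$ I would not repeat the argument but interpolate the two endpoints — the operator-norm estimate ($\alpha=0$) and the $\dom(A)$ estimate ($\alpha=1$) — by the moment inequality for fractional powers. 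Since both carry the common factor $(g_t''(0)-1)/n$ and $K^{1-\alpha}(2M_0+3M_1/2)^\alpha\le K$, this produces the rate $t^\alpha\|A^\alpha x\|$ with a constant of the form $4K$, the factor $4$ absorbing the interpolation constant.
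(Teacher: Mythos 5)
Your subordination skeleton is correct: by the HP-calculus $g_t^n(tA/n)=\int_0^\infty e^{-sA}\,\lambda_n(ds)$ with $\lambda_n$ a probability measure of mean $t$ and variance $t^2(g_t''(0)-1)/n$, and second-order Taylor expansion about $s=t$ with the first moment cancelling does yield the rate $(g_t''(0)-1)/n$, with the $s\to0$ degeneracy patched by the crude $2M_0$ bound and Chebyshev. This is genuinely different from the paper's proof (Theorem \ref{tD3} scaled in Corollary \ref{tD3cor}), which never expands around $s=t$: it decomposes the symbol algebraically, $g-e^{-z}=[g+2g'+g'']-2[g'+g'']+[g''-e^{-z}]$ and $g-e^{-z}=[g+g']-[g'+e^{-z}]$ (formulas \eqref{rep}, \eqref{rep11}), shows each bracket is built from bounded completely monotone functions of total mass exactly $g''(0)-1$ or $(g''(0)-1)/2$ (Lemmas \ref{T1Dop} and \ref{LB1}; note $g+2g'+g''\sim(s-1)^2\nu(ds)$ is precisely your variance density), and then applies the smoothing estimates of Proposition \ref{rfun}. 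There the powers of $s$ produced by differentiating the symbol cancel \emph{exactly} against $\|A^je^{-sA}\|\le M_js^{-j}$, globally in $s$, so no region splitting is needed and closed-form constants drop out. Your route is in fact close to the paper's sharper Theorem \ref{ThA}, which integrates the density $G$ of Lemma \ref{T1Dop} against $\|A^{2-\alpha}e^{-sA}\|\le M_{2-\alpha}s^{\alpha-2}$ --- but precisely because of the $s\to0$ degeneracy you met, that theorem needs the extra hypothesis $g\in\mathcal{B}_{2,\infty}$, which excludes representing measures charging $\{0\}$ (e.g.\ Kendall's $g_t$), while the present statement is claimed for all of $\mathcal{B}_2$.

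The genuine gap is the constants, which you defer to an ``optimisation of the split'' that cannot succeed. Cutting at $|s-t|>\theta t$, Chebyshev gives a tail contribution $2M_0\Var(\lambda_n)/(\theta t)^2=2M_0(g_t''(0)-1)/(n\theta^2)$, while on the bulk one only has $\tau\ge(1-\theta)t$, so the second-order term is bounded by $\tfrac{M_2}{2(1-\theta)^2}\,\tfrac{g_t''(0)-1}{n}$. Driving the $M_0$-coefficient down to $3$ forces $\theta\ge\sqrt{2/3}$, and then the $M_2$-coefficient exceeds $\tfrac12(1-\sqrt{2/3})^{-2}\approx 15$; conversely, making it $\tfrac12$ forces $\theta\to0$ and blows up the tail term. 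An intermediate $M_1$-annulus redistributes but does not remove this inverse-power trade-off (and near $s=0$ the bound $\|e^{-sA}-e^{-tA}\|\le M_1|\log(s/t)|$ is itself unbounded, so $M_0$ must cover a full neighbourhood of the origin). Thus your argument proves the theorem with \emph{some} constant $C(M_0,M_1,M_2)(g_t''(0)-1)/n$, but not with $K=3M_0+3M_1+M_2/2$ or $2M_0+3M_1/2$ as stated. The interpolation step has the same defect: the moment inequality for fractional powers of a sectorial operator does not carry an absolute constant --- it depends on $A$. The paper's own fractional estimate \eqref{momento} is proved exactly by your scheme (after a $\delta$-shift making $A+\delta$ invertible) and yields the constant $3M_0K$, not an absolute multiple of $K$; Lemma \ref{C+global}, which does carry the absolute factor $2^{1+\alpha}\le4$, concerns Wiener-algebra norms and does not apply to your operator-norm endpoints. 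So ``the factor $4$ absorbing the interpolation constant'' is unjustified as written (admittedly, the discrepancy between $4K$ in the statement and $3M_0K$ in the body suggests this factor is delicate even in the paper).
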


As an illustration of our technique we revisit Euler's approximation formula,
and show in  Theorem \ref{EulerI} below that the functional calculus estimates yield essentially the best constants there
(matching the corresponding numerical bounds !). This is a nice illustration of the
power of functional calculus.

\begin{thm}\label{EulerI}
Let $-A$ be the generator of a sectorially bounded holomorphic
$C_0$-semigroup $(e^{-tA})_{t\ge 0}$ on $X.$
 Then for all $\alpha\in [0,1]$, $t>0$, and $n\in \N,$
\begin{equation}\label{b10EI}
\left\|\left(1+\frac{t}{n}A\right)^{-n}x - e^{-tA}x \right\|\le
M_{2-\alpha} r_{\alpha, n} t^\alpha\|A^\alpha x\|, \quad x\in \dom(A^\alpha),
\end{equation}
where
\[
r_{\alpha,n}=\frac{1}{2n}+\frac{1-2\alpha}{12n^2},\quad \alpha\in [0,1/2),\quad
r_{\alpha,n}=\frac{1}{2n},\quad \alpha\in [1/2,1],\quad n\in \N.
\]
Moreover, there exists an absolute constant $C>0$ such that
for all $\alpha\in [0,1]$, $t>0$, and $n\in\N,$
\begin{align} \label{cggA0I}
&\left\|\left(1+\frac{t}{n}A\right)^{-n}x - e^{-tA}x -\frac{t^2A^2x}{2n}e^{-tA}x \right\| \\
\le&
C[M_{3-\alpha}
+M_{4-\alpha}]\frac{t^\alpha}{n^2} \|A^\alpha x\|, \qquad x\in \dom(A^\alpha).\notag
\end{align}
\end{thm}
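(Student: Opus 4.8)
The plan is to obtain the sharp constants by a direct computation rather than by specializing the general holomorphic estimate of Theorem \ref{tD3corI}, whose constant $K=3M_0+3M_1+M_2/2$ is far from optimal. The key observation is that Euler's family is, on the scalar level, an $n$-fold convolution of an exponential density: for $\re z\ge 0$ one has $(1+z)^{-1}=\int_0^\infty e^{-zs}e^{-s}\,\ud s$, and since $-A$ generates a bounded semigroup the Laplace representation $(\lambda+A)^{-1}=\int_0^\infty e^{-\lambda u}e^{-uA}\,\ud u$ $(\re\lambda>0)$ gives $(1+\tfrac tn A)^{-1}=\int_0^\infty e^{-uA}(n/t)e^{-nu/t}\,\ud u$. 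Taking $n$-th powers turns the product into a convolution of densities, so I would first establish, as a Bochner integral,
\[
\left(1+\frac{t}{n}A\right)^{-n}=\int_0^\infty e^{-uA}\,\rho_n(u)\,\ud u,\qquad \rho_n(u)=\frac{(n/t)^n u^{n-1}e^{-nu/t}}{(n-1)!},
\]
where $\rho_n$ is the Gamma density of shape $n$ and rate $n/t$, a probability density on $[0,\infty)$ with mean $t$, variance $t^2/n$, third central moment $2t^3/n^2$, and explicit higher moments. Writing $e^{-tA}$ as the point evaluation at $u=t$ then yields $\bigl(1+\tfrac tn A\bigr)^{-n}x-e^{-tA}x=\int_0^\infty(e^{-uA}-e^{-tA})x\,\rho_n(u)\,\ud u$.

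Next I would Taylor expand $u\mapsto e^{-uA}$ about $u=t$. For $x\in\dom(A^2)$,
\[
e^{-uA}x-e^{-tA}x=-(u-t)Ae^{-tA}x+\int_t^u(u-v)A^2 e^{-vA}x\,\ud v,
\]
and upon integrating against $\rho_n$ the first-order term vanishes because $\int_0^\infty(u-t)\rho_n(u)\,\ud u=0$. This localizes the entire difference at second order, explaining the factor $(g_t''(0)-1)=1$. To move from $A^2 x$ to $A^\alpha x$ I would use the holomorphic smoothing bound $\norm{A^{2-\alpha}e^{-vA}}\le M_{2-\alpha}v^{\alpha-2}$ via $A^2 e^{-vA}=A^{2-\alpha}e^{-vA}A^\alpha$, reducing everything to the scalar estimate
\[
\left\|\left(1+\tfrac{t}{n}A\right)^{-n}x-e^{-tA}x\right\|\le M_{2-\alpha}\,\norm{A^\alpha x}\int_0^\infty\abs{\int_t^u(u-v)v^{\alpha-2}\,\ud v}\,\rho_n(u)\,\ud u.
\]

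The core of the proof, and the step I expect to be the main obstacle, is the purely scalar task of showing that this last integral equals (or is dominated by) $r_{\alpha,n}t^\alpha$. Two features make it delicate. First, the inner integral must be split according to whether $u>t$ or $u<t$: for $u>t$ the weight $v^{\alpha-2}$ is controlled by $t^{\alpha-2}$, but for $u<t$ it grows as $v\downarrow 0$ (producing even a logarithmic blow-up when $\alpha=0$), so one cannot pull out $t^{\alpha-2}$ and must retain the exact antiderivative, relying on the fast decay $\rho_n(u)\sim u^{n-1}$ near the origin for integrability. Second, to reach the sharp constant one must expand $v^{\alpha-2}$ one term beyond leading order around $v=t$ and evaluate the second and third central moments $t^2/n$ and $2t^3/n^2$; the competition between the second-moment contribution $\tfrac1{2n}$ and the third-moment correction, together with the sign of $(u-t)$ inside the absolute value, is precisely what produces the threshold at $\alpha=1/2$ and the term $\tfrac{1-2\alpha}{12n^2}$. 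Carrying this moment bookkeeping through carefully is where the real work lies.

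For the second-order formula \eqref{cggA0I} the scheme is identical but carried one order further. I would Taylor expand $e^{-uA}$ to third order; after integration the mean-zero property again kills the linear term, while the quadratic term reproduces exactly the subtracted quantity $\tfrac{t^2}{2n}A^2 e^{-tA}x$ through $\int_0^\infty(u-t)^2\rho_n(u)\,\ud u=t^2/n$, leaving the remainder
\[
-\int_0^\infty\left(\int_t^u\tfrac{(u-v)^2}{2}A^3 e^{-vA}x\,\ud v\right)\rho_n(u)\,\ud u.
\]
I would estimate it by $A^3 e^{-vA}=A^{3-\alpha}e^{-vA}A^\alpha$ with $\norm{A^{3-\alpha}e^{-vA}}\le M_{3-\alpha}v^{\alpha-3}$. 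Here the near-origin singularity $v^{\alpha-3}$ is more severe, and I expect to need one integration by parts in $v$—trading a derivative onto the semigroup and thereby generating the $M_{4-\alpha}$ contribution—to make the $u<t$ part integrable against $\rho_n$; the third central moment $2t^3/n^2$ then supplies the $n^{-2}$ rate and the constant $C[M_{3-\alpha}+M_{4-\alpha}]$.
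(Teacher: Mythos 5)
Your Gamma-density representation is sound, and it is in fact the paper's own argument in probabilistic dress: after Fubini and the substitution $v=ts$, your scalar quantity $\int_0^\infty\bigl|\int_t^u(u-v)v^{\alpha-2}\,dv\bigr|\,\rho_n(u)\,du$ equals exactly $t^\alpha c_\alpha[g_n]$, with $c_\alpha$ from \eqref{cgg} and $G$ from Lemma \ref{T1Dop} specialized to $g(z)=(1+z)^{-1}$; your Taylor-with-integral-remainder step is Theorem \ref{ThA}, and your outline for \eqref{cggA0I} parallels Theorem \ref{ThAx} and Corollary \ref{ThAxC}.

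The genuine gap lies in the step you yourself single out as the crux, and the plan you describe for it fails quantitatively. Write $h(u)=\int_t^u(u-v)v^{\alpha-2}\,dv$. Keeping only the second and third central moments gives $\int_0^\infty h\,\rho_n\,du\approx \tfrac{t^{\alpha-2}}{2}\,\E[(u-t)^2]+\tfrac{(\alpha-2)t^{\alpha-3}}{6}\,\E[(u-t)^3]=t^\alpha\bigl(\tfrac{1}{2n}+\tfrac{\alpha-2}{3n^2}\bigr)$; the $n^{-2}$ coefficient $\tfrac{\alpha-2}{3}$ is negative on all of $[0,1]$ (it equals $-\tfrac23$ at $\alpha=0$) and changes sign nowhere, so the threshold at $\alpha=1/2$ does \emph{not} arise from a competition between the second-moment term and the third-moment correction. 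The reason is that the fourth central moment $\E[(u-t)^4]=3t^4/n^2+6t^4/n^3$ contributes at the \emph{same} order $n^{-2}$; including it corrects the asymptotic coefficient to $\tfrac{(\alpha-2)(3\alpha-1)}{24}$ (equal to $\tfrac1{12}$ at $\alpha=0$ and $-\tfrac1{12}$ at $\alpha=1$), which is dominated by the claimed $\tfrac{1-2\alpha}{12}$ only through the further inequality $\tfrac{(\alpha-2)(3\alpha-1)}{24}-\tfrac{1-2\alpha}{12}=\tfrac{\alpha(\alpha-1)}{8}\le 0$. Even then you would have only an asymptotic statement, while \eqref{b10EI} is asserted for every $n\in\N$, and uniform control of the Taylor remainder is delicate (the expansion of $u^\alpha$ about $u=t$ is not global, and the singularity of $v^{\alpha-2}$ near $0$ must be absorbed by $\rho_n$). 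The repair --- and the paper's actual proof --- is to forgo the expansion entirely: the inner integral evaluates in closed form, $h(u)=\bigl(\alpha t^{\alpha-1}u+(1-\alpha)t^{\alpha}-u^{\alpha}\bigr)/(\alpha(1-\alpha))$, so since $\E[u]=t$ the whole constant reduces to the fractional Gamma moment $\E[u^\alpha]=t^\alpha\,\Gamma(n+\alpha)/(n^\alpha\Gamma(n))$ (the paper obtains the same via the substitution $s=g(z)$ and the Beta integral \eqref{Class}, see \eqref{Lim1}); the nonasymptotic bound $r_{\alpha,n}$ then follows from the exact digamma estimate $\log n-\psi(n)=\tfrac1{2n}+\tfrac{\theta_n}{12n^2}$, $\theta_n\in(0,1)$, i.e.\ \eqref{psi1}, combined with the convexity bound \eqref{rem1}, $c_\alpha[g_n]\le(1-\alpha)c_0[g_n]+\alpha c_1[g_n]$, yielding in particular the clean $\tfrac1{2n}$ for $\alpha\in[1/2,1]$.

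One further caution on \eqref{cggA0I}: if you bound the third-order remainder $\int_t^u\tfrac{(u-v)^2}{2}A^3e^{-vA}x\,dv$ by absolute values using $M_{3-\alpha}v^{\alpha-3}$, the relevant scalar quantity becomes the \emph{absolute} third moment $\E|u-t|^3\sim t^3n^{-3/2}$, and you recover only the general rate of Theorem \ref{1Th+CI}, not $n^{-2}$; the $n^{-2}$ rate rests on cancellation between the $u>t$ and $u<t$ contributions, which the absolute value destroys. Your instinct to integrate by parts once more is the correct mechanism, but its role is to extract the signed third-order term $b[g_n]t^3A^3e^{-tA}x$ exactly, with $b[g_n]=b[g]/n^2$, before estimating the fourth-order remainder by absolute values --- precisely as in \eqref{34} and Theorem \ref{ThAx} --- not merely to secure integrability near $u=0$.
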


Recall that the estimates of the form
\begin{equation}\label{ABCI}
\left\|\left(1+\frac{t}{n}A\right)^{-n}x - e^{-tA}x \right\|\le
C\frac{t^\alpha}{n} \|A^\alpha x\|,\qquad  x\in \dom(A^\alpha),
\end{equation}
where $n\in \N$ and $t>0$,
for some constant $C=C(A)$ and integer  values
$\alpha=0$ and $\alpha=1$ were obtained
as a partial case of more general rational approximations
for sectorially bounded holomorphic $C_0$-semigroups
in \cite[Theorems 4.2 and 4.4]{LTW91}.
See also \cite[Remark, p. 693-694]{BT79}.
A variant of (\ref{ABCI}) for all
$\alpha\in [0,1]$ and an explicit constant
$C=C(A)$ were also proved in \cite{GTJFA}
as a consequence of functional calculus approach.
In \cite[Theorem 1.3]{BP04},
 (\ref{ABCI}) was proved
for $\alpha=0$ and
$
C=4(\max(M_0,M_1))^3.
$
Moreover, (\ref{ABCI}) has been derived in \cite{ArZa} in the setting of Hilbert space $m$-accretive operators of angle $\alpha\in [0,\pi/2)$
 with
\[
C=\frac{K_\alpha}{\cos^2\alpha},\qquad \frac{\pi\sin\pi\alpha}{2\alpha}\le K_\alpha\le \min(\pi/\alpha-1,K_0),
\]
where $\pi/2\le K_0\le 2+2/\sqrt{3}$.
Note also that the higher order estimate  \eqref{cggA0I}
for $\alpha=0$ and with much worse constants  was obtained in \cite{Viel1}.
See also \cite{GTJFA} for additional references.
%\section{Notation}

Finally, it will be convenient to fix the following notation for the rest of the paper.
For a closed linear operator $A$ on a complex Banach space $X$ we
denote by $\dom(A),$  $\ran(A)$,  and $\sigma(A)$  the
{\em domain}, the {\em range},   and the {\em spectrum} of $A$, respectively.
The norm-closure of the range is
written as $\cls{\ran}(A)$.
The space of bounded linear operators
on $X$ is denoted by $\Lin(X)$.
We let
\[
\C_{+}=\{z\in \C:\,{\rm Re}\,z>0\},\qquad\R_{+}=[0,\infty).
\]

\section{Preliminaries and further notation}\label{prelim}
Let us recall some function-theoretic facts
relevant for the following.
A nonnegative function $g\in \Ce^\infty\bigl(0,+\infty\bigr)$
is called {\em completely monotone} if
\[
%g(t)\ge 0,\quad
 (-1)^n\frac{d^n g(t)}{dt^n}\geq 0, \qquad t >0
\]
for each $n\in\N$.  By Bernstein's theorem
\cite[Theorem 1.4]{SchilSonVon2010}
a function $g:(0,\infty)\to \mathbb R_+$
is completely monotone if and only if there
exists a (necessarily unique) positive Laplace-transformable Borel measure $\nu$ on $\R_+$
such that
\begin{equation}\label{CMLaplace}
g(t) = (\Lap \nu)(t): = \int\limits_0^\infty e^{-ts}\nu (ds) \quad \mbox{for all}\quad t > 0.
\end{equation}

The set of  completely monotone functions will be denoted by $\mathcal{CM}$,
and the set of  bounded completely monotone functions will be denoted by $\mathcal{BM}$.
If $g\in \mathcal{CM}$ has representation
\eqref{CMLaplace}, we will write $g\sim \nu$.
Each completely monotone function $g$ has close relative called a Bernstein function. Recall that a nonnegative function $f\in C^{\infty}(0,\infty)$ is Bernstein if  $f'$ is completely monotone.
An exhaustive treatment of completely monotone and Bernstein functions can be found in  \cite{SchilSonVon2010}.

For $g \in \mathcal{CM}$ and for $k \in \mathbb{N}\cup \{0\}$ we will denote \[
g^{(k)}(0):=\lim\limits_{z \to 0^+}g^{(k)}(z)
\]
where, in general, the limit can be infinite.

Let $\eM(\R_+)$ be the Banach algebra of bounded Borel measures on $\R_+.$
Note that if $\mu \in \eM(\R_+),$ then
the Laplace transform $\Lap\mu$ extends to a function holomorphic on $\mathbb C_+$ and continuous and bounded on $\cls{\C}_+$. Moreover,
 the space
\[ \Wip(\C_+) := \{ \Lap\mu : \mu \in \eM(\R_+)\}
\]
is a commutative Banach algebra with pointwise multiplication and with respect to the
norm
\begin{equation}\label{mmm}
\norm{\Lap \mu}_{\Wip} := \norm{\mu}_{\eM(\R_+)} = \abs{\mu}(\R_+),
\end{equation}
and the mapping
\[ \Lap : \eM(\R_+) \mapsto \Wip(\C_+)
\]
is an isometric isomorphism.

If $g\in \mathcal{BM}, g = \Lap \nu,$
then by Fatou's Lemma,
\begin{equation}\label{g0}
\infty > g(0)=\int_0^\infty \nu(ds)=\|g\|_{\Wip},
\end{equation}
due to the positivity of $\nu.$

If  $-A$ is the generator of a bounded $C_0$-semigroup $(e^{-tA})_{t\ge 0}$ on a Banach space $X$, then
 one defines
\begin{equation}\label{HFc}
g(A):=\int_0^\infty e^{-sA}\,\nu(ds),
\end{equation}
where the integral is understood as a strong Bochner integral.
The continuous algebra homomorphism
\begin{eqnarray*}
\Wip(\mathbb C_+) &\mapsto& \mathcal L(X)\\
g &\mapsto& g(A)
\end{eqnarray*}
is called the Hille-Phillips (HP-) functional calculus.
Clearly
$$
\|g(A)\|\le \|g\|_{\Wip} \sup_{t\ge 0}\|e^{-tA}\|,
$$
and  it is crucial that if $g \in \mathcal{BM},$ then by \eqref{g0} a much better estimate is available:
\[
\|g(A)\|\le g(0)\sup_{t\ge 0}\|e^{-tA}\|.
\]
The basic properties
of the Hille-Phillips functional calculus
can be found in  \cite[Chapter XV]{HilPhi}.
For a general approach to functional calculi, including the HP-calculus, see \cite{Haa2006}.

By a regularization procedure, the HP-calculus admits an extension to a class of functions much larger than
$\Wip(\C_+)$.  More precisely,
if $f: \C_+ \to \C$ is holomorphic such that there exists  $e\in
\Wip(\C_+)$ with $ef \in \Wip(\C_+)$ and the operator $e(A)$ is
injective, then one sets
\begin{eqnarray*}
\dom (f(A))&:=&\{x \in X: (ef)(A)x \in \ran(e(A)) \}\\
 f(A) &:=& e(A)^{-1} \, (ef)(A).
\end{eqnarray*}
In this case $f$ is called  regularizable, and $e$ is called
a regularizer for $f$. Such a definition of $f(A)$ does not
depend on the regularizer $e$ and $f(A)$ is a closed
 operator on $X$. Moreover, the set of all
regularizable functions $f$ forms an algebra ${\mathcal A}$ (depending on $A$).
%(See e.g. \cite[p. 4-5]{Haa2006}.)
The procedure gives rise to the mapping
\[ {\mathcal A} \ni f \tpfeil f(A)
\]
from
%the algebra of regularizable functions
${\mathcal A}$ into the set of all closed operators on $X$ is
called the {\em extended Hille--Phillips {\rm (HP-)} functional
calculus} for $A$. The calculus possess a number of natural
properties, and we refer to \cite[Chapters 1 and 3]{Haa2006} for their exhaustive treatment.

In this paper, the next product rule will be important:
 if $f$ is regularizable and $g\in \Wip(\C_+)$,
then
\begin{equation}\label{hpfc.e.prod}
 g(A) f(A) \subseteq f(A) g(A) = (fg)(A),
\end{equation}
where products
of operators are considered on their natural domains. If $f \in \Wip (\mathbb C_+),$ then $f(A)$ is bounded and clearly $g(A)f(A)=(fg)(A).$
The rule will be mostly used for  $f(z)=z^\alpha, \alpha > 0,$ that are regularizable
(e.g. by a regularizer $e(z)=(z+1)^{-\alpha}$) and
thus belong to the extended HP-calculus, see e.g. \cite[p. 3060]{GTJFA} and \cite[Chapter 3]{Haa2006}.
Remark that
fractional powers $A^\alpha$ defined in this way coincide with the fractional
powers defined by means of the widely used extended holomorphic functional
calculus developed e.g. in \cite[Section 3]{Haa2006}. We omit the
details and refer to \cite{BaGoTa}, where, in particular,
compatibility of various calculi is thoroughly discussed.
(See also \cite[Proposition 1.2.7 and Section 3.3]{Haa2006}.)
Thus  several properties of fractional powers, well-known within the holomorphic functional calculus,
 will be used here without a special reference.

The following spectral mapping theorem for
the HP functional calculus will also be crucial. See e.g.
\cite[Theorem 16.3.5]{HilPhi} or \cite[Theorem 2.2]{GoHaTo12} for
its proof.
\begin{thm}\label{spmapping}
Let $g \in \Wip (\mathbb C_+)$ and  $-A$ be the generator of a bounded $C_0$-semigroup on a Banach space $X.$
Then
\begin{equation*}
\{g (\lambda): \lambda \in \sigma(A)\} \subset \sigma(g(A)).
\end{equation*}
\end{thm}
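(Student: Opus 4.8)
The plan is to fix an arbitrary $\lambda\in\sigma(A)$ and show directly that $g(\lambda)\in\sigma(g(A))$. Since $-A$ generates a bounded $C_0$-semigroup, $\sigma(A)\subseteq\overline{\C}_+$, so writing $g=\Lap\mu$ with $\mu\in\eM(\R_+)$ the value $g(\lambda)=\int_0^\infty e^{-s\lambda}\,\mu(ds)$ is well-defined as the continuous boundary extension of $\Lap\mu$, and $g(A)=\int_0^\infty e^{-sA}\,\mu(ds)$ is a bounded operator. I would split according to whether $\lambda-A$ is bounded below: for a closed operator, $\sigma(A)$ is the union of its approximate point spectrum and the set of $\lambda$ for which $\lambda-A$ is injective with non-dense range, so it suffices to treat these two cases separately.

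For the approximate point spectrum case, choose $x_n\in\dom(A)$ with $\|x_n\|=1$ and $(\lambda-A)x_n\to 0$. The key identity, obtained by differentiating $r\mapsto e^{-(s-r)A}e^{-r\lambda}x$, is
\[
e^{-sA}x-e^{-s\lambda}x=\int_0^s e^{-(s-r)A}e^{-r\lambda}(\lambda-A)x\,dr,\qquad x\in\dom(A).
\]
Because $\re\lambda\ge 0$ and $\|e^{-tA}\|\le M$, this yields $\|e^{-sA}x_n-e^{-s\lambda}x_n\|\le sM\|(\lambda-A)x_n\|$. Writing $(g(A)-g(\lambda))x_n=\int_0^\infty(e^{-sA}-e^{-s\lambda})x_n\,\mu(ds)$ and splitting the integral at a truncation level $T$, I would bound the tail crudely by $(M+1)\,\abs{\mu}([T,\infty))$ and the head by $MT\,\abs{\mu}(\R_+)\,\|(\lambda-A)x_n\|$. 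Given $\varepsilon>0$, first fix $T$ so that the tail is below $\varepsilon/2$ (possible since $\abs{\mu}$ is finite), then let $n\to\infty$ to push the head below $\varepsilon/2$. Hence $(g(A)-g(\lambda))x_n\to 0$, so $g(\lambda)$ belongs to the approximate point spectrum of $g(A)$.

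For the remaining case, $\lambda-A$ is injective with non-dense range, so there exists $\phi\in X^*\setminus\{0\}$ annihilating $\ran(\lambda-A)$, i.e. $A^*\phi=\lambda\phi$. Then $\phi$ is an eigenvector of the weak$^*$ adjoint semigroup, $e^{-sA^*}\phi=e^{-s\lambda}\phi$, and integrating against $\mu$ gives $g(A)^*\phi=\int_0^\infty e^{-sA^*}\phi\,\mu(ds)=g(\lambda)\phi$. Thus $g(\lambda)$ is an eigenvalue of $g(A)^*$, so $g(\lambda)\in\sigma(g(A)^*)=\sigma(g(A))$. Combining the two cases proves the inclusion.

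The main obstacle is the approximate point spectrum step. Since $\mu$ is merely a \emph{bounded} measure, $\int_0^\infty s\,\abs{\mu}(ds)$ may diverge, so the naive global estimate $M\|(\lambda-A)x_n\|\int_0^\infty s\,\abs{\mu}(ds)$ is useless; the resolution is precisely the truncation in $T$, which uses the linear-in-$s$ bound only on the finite interval $[0,T]$ and absorbs the tail through the finiteness of $\abs{\mu}$. Everything else, including the adjoint argument in the residual case, is then routine.
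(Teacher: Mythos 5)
Your proof is correct, and it is essentially the standard argument: the paper itself gives no in-text proof of Theorem \ref{spmapping}, deferring to \cite[Theorem 16.3.5]{HilPhi} and \cite[Theorem 2.2]{GoHaTo12}, whose proofs run along exactly your lines --- the Duhamel-type identity $e^{-sA}x-e^{-s\lambda}x=\int_0^s e^{-(s-r)A}e^{-r\lambda}(\lambda-A)x\,dr$ with approximate eigenvectors for the approximate point spectrum, and an eigenfunctional of $A^*$ for the residual part. Your truncation at level $T$, using $\abs{\mu}([T,\infty))\to 0$ to compensate for the possibly infinite first moment of $\abs{\mu}$, is precisely the right device, and the remaining details (the decomposition of $\sigma(A)$ for the closed operator $A$, the bound $\lvert e^{-r\lambda}\rvert\le 1$ valid because $\sigma(A)\subset\cls{\C}_+$, and $\sigma(g(A)^*)=\sigma(g(A))$) all check out.
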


Since holomorphic semigroups will receive a special attention in our studies
it would be helpful to recall some of their basic properties.
If a $C_0$-semigroup $(e^{-tA})_{t \ge 0}$ on $X$
extends holomorphically to a sector
$\Sigma_\theta=\{\lambda \in \mathbb C: |\arg \lambda| <\theta\}$ for
some $\theta \in (0,\frac{\pi}{2}]$ and  $e^{-\cdot A}$ is
bounded and strongly continuous
in $\overline{\Sigma}_\xi$ whenever $0<\xi<\theta$, then
$(e^{-tA})_{t \ge 0}$ is said to be a \emph{sectorially bounded holomorphic} $C_0$-semigroup (of angle $\theta$).
It is well-known that sectorially bounded holomorphic semigroups
can be characterized in terms of their behavior on the real
axis. Namely,  $-A$ is the generator of a sectorially bounded
holomorphic $C_0$-semigroup $(e^{-tA})_{t \ge 0}$ on  $X$ if
and only if $e^{-tA}(X)\subset \dom (A), t>0,$ and
\[
\sup_{t\ge 0}\,\|e^{-tA}\|<\infty \qquad\mbox{and}\qquad
\sup_{t>0}\,\|tAe^{-tA}\|<\infty,
\]
see e.g. \cite[Theorem 4.6]{EngNag}.
Moreover, if $(e^{-tA})_{t \ge 0}$ is a sectorially bounded holomorphic $C_0$-semigroup, then for any $\beta >0$ one has
\begin{equation}\label{boundbeta}
\sup_{t >0 }  \| t^\beta A^\beta e^{-tA} \| <\infty,
\end{equation}
see e.g. \cite[Proposition 3.4.3]{Haa2006}.
The theory of $C_0$-semigroups, especially holomorphic ones, could essentially be put
into the framework of functional calculi. However, we preferred
%not take that point of view here,
%and
to follow a more conventional treatment here.

\section{Some classes of bounded complete monotone functions and their estimates}\label{functions}
To be able to ``insert'' an operator into relations such as \eqref{AsB0} and to obtain
norm bounds for the corresponding operator expressions,
we will need a number of  estimates for completely monotone  functions.
Then the HP-calculus converts them into semigroup approximation formulas with rates.
Thus this section provides a function-theoretic background for subsequent functional calculi arguments.

\subsection{Estimates of functions for first order approximations}

To realize a functional calculus approach, we start with introducing a subclass $\mathcal{B}_1$
of normalized bounded completely monotone functions:
\begin{equation}\label{Gg}
\mathcal{B}_1:=\{g\in \mathcal{BM}:\;g(0)=1,\;g'(0)=-1\}.
\end{equation}
From the definition it follows that if $g \in \mathcal{B}_1, g \sim \nu,$ then
\begin{equation}\label{Rel1}
g(0)=\int_0^\infty \nu(ds)=1,\qquad
-g'(0)=\int_0^\infty s\,\nu(ds)=1,
\end{equation}
and for every $z>0,$
\begin{equation}\label{defB}
0\le g(z)\le 1,\qquad  -1\le g'(z)\le 0.
\end{equation}
The latter inequality implies that for each $g\in \mathcal{B}_1,$
\begin{equation}\label{defB0}
1-g(z)\le z, \qquad z>0.
\end{equation}
Given  $g\in \mathcal{B}_1$ define a sequence $(g_n)_{n \ge 1},$ one of the main
objects of our studies, by
\begin{equation}\label{Defg}
g_n(z):=g^n(z/n), \qquad z >0,
\end{equation}
and note that $(g_n)_{n \ge 1} \subset \mathcal B_1.$

For sharp norm estimates of operator functions,
the following subclass $\mathcal{B}_2$ of $\mathcal B_1$ will be useful:
\[
\mathcal{B}_2:=\{
g\in \mathcal{B}_1:\,g''(0)<\infty\}.
\]
Note that, if $g\in \mathcal{B}_2, g \sim \nu,$ then
\begin{equation}\label{secD}
\int_0^\infty (s-1)^2\,\nu(ds)=g''(0)-1,
\end{equation}
hence $g''(0)\ge 1$ and $g''(0)=1$ if and only if
$g(z)=e^{-z}$. Clearly, if  $g\in \mathcal{B}_2$
then  $(g_n)_{n \ge 1} \subset \mathcal{B}_2.$ Moreover,
\begin{equation}\label{h00}
g_n''(0) = 1+ \frac{g''(0)-1}{n}.
\end{equation}

The next statement is similar  to
\cite[Proposition 4.4]{GTJFA}, where we considered
completely monotone functions $g$ of the form $g=e^{-\varphi}$, with a normalized
Bernstein function $\varphi.$ It is fundamental in relating the semigroup approximation
to the HP-calculus.
While a  result more general than \eqref{GtSDop} can be found in \cite[Theorem C]{Rosser} (the reference,  which was not known
to the authors of \cite{GTJFA} at the time of writing that paper), the
statement below suffices for our purposes, and we provide it with an independent simple proof.
For  $g \in \mathcal{CM}$ and $\alpha\in [0,2]$  define
\[
\Delta_\alpha^{g}(z):=\frac{g(z)-e^{-z}}{z^\alpha},\qquad  z>0.
\]
\begin{lemma}\label{T1Dop}
Let $g\in \mathcal{B}_1,$ $g \sim \nu.$
Then
$\Delta_2^g \in \mathcal {CM}.$
Moreover
\begin{equation}\label{GtSDop}
\Delta_2^g (z)=\int_0^\infty e^{-z s}G(s)\,ds,\qquad z >0,
\end{equation}
where $G$ is a function  continuous on $\mathbb R_+$ and given by
\begin{align}\label{dopp}
G(s)=
\begin{cases} \int_0^s (s-\tau)\, \nu(d\tau),& \qquad s\in [0,1],\\
\int_s^\infty (\tau -s)\,\nu(d\tau),& \qquad s>1.
\end{cases}
\end{align}
If $g\in \mathcal{B}_2$, then
\begin{equation}\label{cor12}
\|\Delta_2^g\|_{\Wip(\C_+)}=\Delta_2^g(0)=\frac{g''(0)-1}{2}.
\end{equation}
\end{lemma}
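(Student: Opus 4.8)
The plan is to realize $\Delta_2^g$ directly as the Laplace transform of the explicit density $G$ and then to read off all three assertions from the positivity of $G$. Writing $e^{-z}=(\Lap\delta_1)(z)$, where $\delta_1$ is the unit mass at $1$, we have $g(z)-e^{-z}=(\Lap m)(z)$ for the signed measure $m:=\nu-\delta_1$. Crucially, the normalization \eqref{Rel1} forces $m(\R_+)=0$ and $\int_0^\infty s\,m(ds)=0$, i.e. the zeroth and first moments of $m$ vanish; this is exactly what will make the two successive divisions by $z$ below converge.

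First I would divide by $z$ twice, using the elementary rule that for a finite (signed) measure $\mu$ one has $(\Lap\mu)(z)/z=\int_0^\infty e^{-zs}\mu([0,s])\,ds$, obtained from Fubini's theorem, and its "density" analogue $\int_0^\infty e^{-zs}f(s)\,ds$ divided by $z$ equals $\int_0^\infty e^{-zs}\big(\int_0^s f(\tau)\,d\tau\big)\,ds$. Applying these in turn to $m$ yields
\[
\frac{g(z)-e^{-z}}{z^2}=\int_0^\infty e^{-zs}\,G(s)\,ds,\qquad G(s)=\int_0^s\big(\nu([0,\tau])-\car_{[1,\infty)}(\tau)\big)\,d\tau .
\]
Then I would identify this $G$ with \eqref{dopp}: Fubini gives $\int_0^s\nu([0,\tau])\,d\tau=\int_{[0,s]}(s-\tau)\,\nu(d\tau)$, which is already the stated formula for $s\in[0,1]$; for $s>1$ one subtracts the term $s-1$ coming from $\car_{[1,\infty)}$ and rewrites using $\int_0^\infty(s-\tau)\,\nu(d\tau)=s-1$ (again \eqref{Rel1}) to pass to $\int_s^\infty(\tau-s)\,\nu(d\tau)$. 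Continuity of $G$ on $\R_+$ is clear on each branch, and the two branches agree at $s=1$ because their difference is $\int_0^\infty(1-\tau)\,\nu(d\tau)=0$.

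Since both branches of \eqref{dopp} integrate a nonnegative integrand against the positive measure $\nu$, we get $G\ge 0$; hence $\Delta_2^g$ is the Laplace transform of the positive measure $G(s)\,ds$ and therefore lies in $\mathcal{CM}$ (either by Bernstein's theorem or by differentiating \eqref{GtSDop} under the integral sign). Finally, when $g\in\mathcal{B}_2$, monotone convergence gives $\Delta_2^g(0)=\int_0^\infty G(s)\,ds$, and by \eqref{g0} this common value equals $\|\Delta_2^g\|_{\Wip(\C_+)}$; a further Fubini on each branch gives $\int_0^1 G=\tfrac12\int_{[0,1]}(1-\tau)^2\,\nu(d\tau)$ and $\int_1^\infty G=\tfrac12\int_{(1,\infty)}(\tau-1)^2\,\nu(d\tau)$, whence $\int_0^\infty G(s)\,ds=\tfrac12\int_0^\infty(\tau-1)^2\,\nu(d\tau)=(g''(0)-1)/2$ by \eqref{secD}.

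The Fubini manipulations are routine; the one point requiring care is the justification of the two divisions by $z$ for the \emph{signed} measure $m$. This is precisely where the vanishing of the zeroth and first moments of $m$ is essential: it guarantees that $\nu([0,s])-\car_{[1,\infty)}(s)$ decays and is integrable, so that all interchanges of integration are legitimate and no boundary terms survive. An equally valid alternative, which sidesteps signed measures entirely, is to take \eqref{dopp} as an ansatz and verify \eqref{GtSDop} by computing the Laplace transform of $G$ directly.
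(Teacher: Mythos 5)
Your proof is correct and takes essentially the same route as the paper's: both realize $\Delta_2^g$ as the Laplace transform of the explicit nonnegative density $G$ by Fubini-type manipulations (the paper divides by $z^2$ in one step, writing $z^{-2}=\int_0^\infty e^{-z\tau}\tau\,d\tau$ and convolving with $\nu$, while you divide by $z$ twice through cumulative functions of the signed measure $\nu-\delta_1$ --- the same computation), then use the moment conditions \eqref{Rel1} to identify the branch for $s>1$ and the continuity at $s=1$, and deduce complete monotonicity from $G\ge 0$. The only cosmetic difference is in \eqref{cor12}: the paper evaluates $\Delta_2^g(0)$ via Fatou's lemma and l'H\^opital's rule, whereas you compute $\int_0^\infty G(s)\,ds=\tfrac12\int_0^\infty(\tau-1)^2\,\nu(d\tau)$ directly and invoke \eqref{secD} --- both are valid, and your remark that the two divisions by $z$ are legitimate is in fact automatic for fixed $z>0$ since the relevant cumulative functions are bounded.
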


\begin{proof}

First we prove the representation \eqref{GtSDop}.
Taking into account \eqref{CMLaplace},
we have for $z>0:$
\begin{eqnarray*}
\Delta_2^g(z)=z^{-2} \Delta_0^g(z)&=&\int_0^\infty e^{-z\tau} \tau d\tau \int_0^\infty e^{-zs}
\,\nu(ds) -\int_1^\infty e^{-zs}(s-1)\,ds
\\
&=&\int_0^\infty e^{-zs} \,\int_0^s (s-\tau) \,\nu(d\tau) \, ds-
\int_1^\infty e^{-zs}(s-1)\,ds\\
&=&\int_{0}^{\infty}e^{-zs} H(s)\, ds,
\end{eqnarray*}
where
\[
 H(s)= \int_0^s (s-\tau)\, \nu(d\tau)-
(s-1) 1_{[0,\infty)}(s-1).
\]

It is clear that if $ s \le 1$ then $H(s)=G(s).$ We prove that
$H(s)=G(s)$ for $s > 1$ as well. If $s >1,$ then using
\eqref{Rel1}, we infer that
\begin{eqnarray*}
H(s)&=&s\left(\int_0^s\, \nu(d\tau)-1\right)+1-\int_0^s \tau \,\nu(d\tau)\\
&=&-s\int_{s+}^\infty \,\nu(d\tau)+
\int_{s+}^\infty \tau \,\nu(d\tau)\\
&=&\int_s^\infty (\tau -s)\,\nu(d\tau),
\end{eqnarray*}
hence  $H(s)=G(s).$ Thus \eqref{GtSDop} holds. Since
$G(s)>0,$ $s>0,$ the function $\Delta_2^g$ is completely monotone.
The continuity of $G$ at $s=1$ follows from the identity
(see (\ref{Rel1}))
\[
\int_0^\infty \nu(d\tau)=\int_0^\infty \tau\nu(d\tau),
\]
so that
\[
G(1)=\int_0^1(1-\tau)\,\nu(d\tau)
=\int_1^\infty (\tau-1)\,\nu(d\tau)=\lim_{s\to 1+}\,G(s),
\]
while the continuity of $G$ at other points of $[0,\infty)$ is obvious.

To prove (\ref{cor12}), taking into account $\Delta_2^g \in \mathcal{CM}$ and
applying  Fatou's lemma and de l'Hopital's rule twice, we obtain:
\begin{align*}
\|\Delta_2^g\|_{\Wip(\C_+)}=&\lim_{z\to 0+}\,
\frac{g(z)-e^{-z}}{z^2}
%\lim_{z\to 0+}\,
%\frac{g'(z)+e^{-z}}{2z}\\
%=&\lim_{z\to 0+}\,
%\frac{g''(z)-e^{-z}}{2}
=\frac{g''(0)-1}{2}.
\end{align*}
\end{proof}

It is useful to note several elementary properties of the density $G$
defined above.
In particular, $G$ is non-decreasing on $[0,1]$ and non-increasing on
$s\ge 1$. Moreover, $G(0)=0,$
\begin{equation}\label{Gless2}
\max\{G(s):s>0\}=G(1)
=\int_0^1(1-\tau)\,\nu(dt)\le 1,
\end{equation}
and
\begin{equation}\label{Gless3}
\lim_{s\to\infty}\,G(s)\le \lim_{s\to\infty}\,\int_s^\infty \tau\,\nu(d\tau)=0.
\end{equation}
The latter property  implies
\begin{equation}\label{rem}
\lim_{\delta\to +0}\, \delta \int_0^\infty e^{-\delta s} G(s)\,ds=0.
\end{equation}
by the regularity of Abel's summation.
Finally, if $g \in \mathcal B_2,$ then  $\Delta_2^g \in L^1(0,\infty),$ so
the corresponding $G$ satisfies
\[
\int_{0}^{\infty}\frac{G(s)\, ds}{s}<\infty
\]
by Fubini's theorem.

\begin{remark}\label{bounds}
Lemma \ref{T1Dop} yields several simple but useful bounds for $g-e^{-z}, g\in \mathcal{B}_1.$
 Since $g(z)\le 1$ for $z \ge 0,$ from Lemma \ref{T1Dop} it
follows that
\begin{equation}\label{pos1}
0\le g(z)-e^{-z}\le 1,\qquad z\ge 0.
\end{equation}
Since $g'(z)\le 0$, $z\ge 0$, we also have
\begin{equation}\label{pos2}
g(z)-e^{-z}=\int_0^z (g'(s)+e^{-s})\,ds\le
z,\qquad z\ge 0.
\end{equation}
Furthermore, from (\ref{pos1}) and (\ref{pos2}), we conclude that
\begin{equation}\label{pos3}
0\le g(z)-e^{-z}\le \min\,\{1,z\}
\le \frac{2z}{z+1},\qquad z\ge 0.
\end{equation}
\end{remark}

The next proposition shows that the sequence $(g_n)_{n \ge 1}$ defined by \eqref{Defg} may serve as an
approximation of the exponent  and justifies  our
functional calculus approach to the semigroup approximation.

\begin{prop}\label{exp}
Let $g\in \mathcal{B}_1$.
Then for all $t>0$ and $n\in \N,$
\begin{equation}\label{LB}
0\le g_n(t)-e^{-t}\le
t(1+g'(t/n)),
\end{equation}
and, in particular,
\begin{equation}\label{LBB}
\lim_{n\to \infty}\,g_n(t)=e^{-t},
\end{equation}
uniformly in $t$ from compact subsets of $[0,\infty).$
\end{prop}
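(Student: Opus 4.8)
The plan is to collapse the $n$-th power difference into a single first-order difference and then estimate that difference directly using complete monotonicity. The lower bound in \eqref{LB} is immediate: by \eqref{pos1} one has $g(z)\ge e^{-z}$ for all $z\ge 0$, so with $a:=g(t/n)$ and $b:=e^{-t/n}$ we have $a\ge b\ge 0$, and raising to the $n$-th power gives $g_n(t)=a^n\ge b^n=e^{-t}$.

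For the upper bound I would use the elementary factorization
\[
a^n-b^n=(a-b)\sum_{k=0}^{n-1}a^{n-1-k}b^k .
\]
Since $0\le b\le a\le 1$ by \eqref{defB} and \eqref{pos1}, each summand is at most $1$, so the sum is bounded by $n$; hence
\[
g_n(t)-e^{-t}\le n\bigl(g(t/n)-e^{-t/n}\bigr).
\]
It therefore suffices to establish the pointwise inequality $g(z)-e^{-z}\le z\bigl(1+g'(z)\bigr)$ for $z\ge 0$, because substituting $z=t/n$ and multiplying by $n$ produces exactly $t(1+g'(t/n))$.

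To prove that pointwise inequality, set $\psi(z):=z(1+g'(z))-(g(z)-e^{-z})$. Then $\psi(0)=0$ by \eqref{Rel1}, and differentiating gives $\psi'(z)=1-e^{-z}+z\,g''(z)$. The decisive input is that $g$ is completely monotone, so $g''\ge 0$ on $(0,\infty)$; together with $1-e^{-z}\ge 0$ this yields $\psi'(z)\ge 0$, and hence $\psi(z)\ge 0$ for all $z\ge 0$. This completes the proof of \eqref{LB}. For \eqref{LBB} and its uniformity, note first that $g'\ge -1$ by \eqref{defB}, so the right-hand side of \eqref{LB} is non-negative, and that $g''\ge 0$ makes $g'$ non-decreasing. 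Thus for $t\in[0,T]$ we obtain $0\le g_n(t)-e^{-t}\le t(1+g'(t/n))\le T(1+g'(T/n))$, and the final expression tends to $0$ as $n\to\infty$ since $g'(T/n)\to g'(0)=-1$; this gives uniform convergence on $[0,T]$.

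The main obstacle is isolating the correct first-order quantity: the factorization collapses the $n$-dependence into a single factor $n$, but one must then recognize that $n\bigl(g(t/n)-e^{-t/n}\bigr)$ is controlled by $t\bigl(1+g'(t/n)\bigr)$ through the bound $g(z)-e^{-z}\le z(1+g'(z))$, whose verification rests entirely on the sign of $g''$. Once these two observations are in place, everything else is routine.
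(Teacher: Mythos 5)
Your proof is correct, and it takes a genuinely different route from the paper's. The paper argues multiplicatively: it writes $\log\bigl(g^n(t/n)/e^{-t}\bigr)=n\int_0^{t/n}\frac{g(s)+g'(s)}{g(s)}\,ds$, bounds the integrand using $g\le 1$ together with the monotonicity of $1+g'$ and of $g$ to get $\log\bigl(g^n(t/n)/e^{-t}\bigr)\le \frac{t}{g(t/n)}\bigl(1+g'(t/n)\bigr)$, and then converts back to an additive bound via the elementary inequality $u-v\le u\log(u/v)$ for $0<v<u$, absorbing the leftover factor $g^{n-1}(t/n)\le 1$. You instead stay additive throughout: the telescoping bound $a^n-b^n\le n(a-b)$ for $0\le b\le a\le 1$ (with $a=g(t/n)$, $b=e^{-t/n}$, ordered by \eqref{pos1}, which is legitimately available since Remark \ref{bounds} precedes the proposition) reduces everything to the one-step inequality $g(z)-e^{-z}\le z\bigl(1+g'(z)\bigr)$, which you verify from the sign of $\psi'(z)=1-e^{-z}+zg''(z)\ge 0$, the only structural input being $g''\ge 0$; incidentally, the same one-step inequality also drops out in one line from the representation used for \eqref{pos2}, namely $g(z)-e^{-z}=\int_0^z\bigl(g'(s)+e^{-s}\bigr)\,ds\le z\bigl(g'(z)+1\bigr)$, using that $g'$ is non-decreasing and $e^{-s}\le 1$. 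Both arguments rest on the same basic facts ($0\le g\le 1$, $-1\le g'\le 0$, $g\ge e^{-z}$, monotonicity of $g'$) and yield the identical bound $t\bigl(1+g'(t/n)\bigr)$; yours is arguably the more elementary, avoiding logarithms and the comparison inequality $u-v\le u\log(u/v)$, while the paper's logarithmic version additionally records a multiplicative estimate on the ratio $g^n(t/n)/e^{-t}$ that is simply discarded in the final step. Your uniformity argument for \eqref{LBB} — majorizing on $[0,T]$ by $T\bigl(1+g'(T/n)\bigr)\to T\bigl(1+g'(0)\bigr)=0$ — is also correct and makes explicit a step the paper leaves implicit.
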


\begin{proof}
Consider the quotients $g_n(t)/e^{-t}, n \in \mathbb N.$
Using \eqref{defB}, \eqref{defB0} and
the monotonicity of $1+g'$ and $g$, we have
\begin{align*}
\log\frac{g^n(t/n)}{e^{-t}}=&
n\left(\log g(t/n) + t/n \right)
=n\int_0^{t/n}\frac{g(s)+g'(s)}{g(s)}\,ds\\
\le& n\int_0^{t/n}\frac{1+g'(s)}{g(s)}\,ds
 \le \frac{t}{g(t/n)}(1+g'(t/n)).
\end{align*}
Then, since
\[
t-s\le t\log(t/s),\qquad 0<s<t,
\]
we obtain
\[
0\le g^n(t/n)-e^{-t}\le
t (1+g'(t/n)).
\]
\end{proof}

\begin{rem}\label{B22}
If $g\in \mathcal{B}_2$, then
the inequalities
$1+g'(t)\le g''(0)t,$ $t\ge 0,$
and \eqref{LB}
imply that for all $n \in \N$ and $t \ge 0,$
\[
0\le g_n(t)-e^{-t}\le
g''(0)\frac{t^2}{n}.
\]
Moreover, by \eqref{cor12},
we have a slightly stronger inequality:
\begin{equation}\label{LB0}
0\le g_n(t)-e^{-t}\le
(g''(0)-1)\frac{t^2}{2n}, \qquad n \in \N, \quad t \ge 0.
\end{equation}
\end{rem}
We proceed with an auxiliary statement similar to Lemma \ref{T1Dop}.
While Lemma \ref{T1Dop} will be useful in the study of approximations for general $C_0$-semigroups,
Lemma \ref{LB1} below will play the same role for approximations of holomorphic
$C_0$-semigroups.
\begin{lemma}\label{LB1}
For every $g\in \mathcal{B}_1$ let $s_g(z):=z^{-1}({g(z)+g'(z)}),\, z>0.$
Then
$s_g \in \mathcal{CM}.$
If moreover  $g\in \mathcal{B}_2,$ then $s_g \in \Wip(\C_+)$ and
\begin{equation}\label{LemA10}
\|s_g\|_{\Wip(\C_+)}
=g''(0)-1.
\end{equation}
\end{lemma}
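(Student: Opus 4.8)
The plan is to produce an explicit nonnegative density for $s_g$ and then read off both assertions from it, mirroring the proof of Lemma \ref{T1Dop}. Writing $g\sim \nu$ and differentiating the Laplace representation \eqref{CMLaplace} under the integral sign (legitimate for $z>0$ since $\nu$ is finite with finite first moment by \eqref{Rel1}), I would first record
\[
g(z)+g'(z)=\int_0^\infty (1-s)\,e^{-zs}\,\nu(ds),\qquad z>0.
\]
Then, using $z^{-1}=\int_0^\infty e^{-z\tau}\,d\tau$ and the fact that a product of Laplace transforms is the Laplace transform of the convolution, I would obtain
\[
s_g(z)=\int_0^\infty e^{-zu}K(u)\,du,\qquad K(u):=\int_{[0,u]}(1-s)\,\nu(ds).
\]

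The crux is the nonnegativity of $K$, and this is precisely where the normalization \eqref{Rel1} does the work. For $u\le 1$ the integrand $1-s$ is nonnegative on $[0,u]$, so $K(u)\ge 0$ directly. For $u>1$ I would use $\int_0^\infty (1-s)\,\nu(ds)=g(0)+g'(0)=0$ to rewrite $K(u)=\int_{(u,\infty)}(s-1)\,\nu(ds)$, which is again nonnegative since $s-1>0$ on $(u,\infty)$. With $K\ge 0$, Bernstein's theorem yields $s_g\in\mathcal{CM}$. This sign-flip at $u=1$, forced by the first-moment condition, is the only genuinely delicate point; everything else is bookkeeping parallel to Lemma \ref{T1Dop}.

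For the second assertion, assume $g\in\mathcal{B}_2$. Since $s_g\in\mathcal{CM}$ with positive representing density $K$, monotone convergence gives $s_g(0)=\lim_{z\to 0+}s_g(z)=\int_0^\infty K(u)\,du$. A Fubini computation, splitting the integral at $u=1$ exactly as above, evaluates this to
\[
\int_0^\infty K(u)\,du=\int_0^\infty (s-1)^2\,\nu(ds)=g''(0)-1,
\]
the last equality being \eqref{secD}. In particular $s_g(0)<\infty$, so $s_g$ is a bounded completely monotone function, hence $s_g\in\Wip(\C_+)$, and the norm identity \eqref{g0} yields $\|s_g\|_{\Wip(\C_+)}=s_g(0)=g''(0)-1$, as claimed.

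I expect no serious obstacle beyond verifying the nonnegativity of $K$ and justifying the interchange of integration in the convolution and Fubini steps; both are routine given that $\nu$ is finite with finite first moment (and finite second moment in the $\mathcal{B}_2$ case, ensuring $\int_0^\infty K(u)\,du<\infty$).
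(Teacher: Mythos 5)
Your proof is correct and follows essentially the same route as the paper: you exhibit exactly the paper's nonnegative density (its $G_0$ equals your $K$, with the same sign-flip at $u=1$ forced by the normalization $\int_0^\infty(1-s)\,\nu(ds)=0$), and conclude via Bernstein's theorem. The only cosmetic difference is at the end, where the paper evaluates $\|s_g\|_{\Wip(\C_+)}=\lim_{z\to 0+}z^{-1}(g(z)+g'(z))$ by Fatou's lemma and L'H\^opital's rule, while you compute $\int_0^\infty K(u)\,du=\int_0^\infty(s-1)^2\,\nu(ds)$ directly by Tonelli and \eqref{secD}; both are routine and give the same value.
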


\begin{proof}
The fact that $s_g\in \mathcal{CM}$ for $g\in \mathcal{B}_1$ follows from \cite[Theorem A]{Rosser}.
Again in our particular case, the argument is quite simple and we give it below.
Let $g \sim \nu.$
Using Fubini's theorem, we have
\begin{align*}
\frac{g(z)+g'(z)}{z}
%&\int_0^\infty e^{-z\tau} d\tau \int_0^\infty e^{-zs}
%\,\nu(ds)-\int_0^\infty e^{-z\tau} d\tau \int_0^\infty s e^{-zs}
%\,\nu(ds)\\
=&
\int_0^\infty e^{-zs} \int_0^s\nu(d\tau)\,ds
-\int_0^\infty e^{-zs} \int_0^s\tau \nu(d\tau)\,ds\\
=&\int_0^\infty e^{-zs} G_0(s)\,ds,\quad z>0,
\end{align*}
where
\[
G_0(s)=\int_0^s (1-\tau)\,\nu(d\tau),\quad s\in [0,1],
\]
and, by \eqref{Rel1},
\[
G_0(s)=
\int_{s}^\infty (\tau-1)\,\nu(d\tau)\ge 0,\qquad s>1.
\]
Thus $G_0$ is  positive on $[0,\infty)$ and $s_g \in \mathcal{CM}$ by Bernstein's theorem.
If $g\in \mathcal{B}_2$, then
(\ref{LemA10}) follows
from Fatou's Lemma and L'H\^ opital's rule.
\end{proof}

As a straightforward implication of Lemma \ref{LB1} observe that
for $g\in \mathcal{B}_1$ one has
\[
g(z)+g'(z)\ge 0,\qquad z>0.
\]
If moreover $g\in \mathcal{B}_2$, then
\begin{equation*}\label{der2}
g(z)+g'(z)\le (g''(0)-1)z,\qquad z\ge 0.
\end{equation*}

Let us now introduce a functional $L$ on $\mathcal {BM}$ crucial for operator-norm estimates via the HP-calculus.
For $g \in \mathcal{BM},$ $g \sim \nu,$ define
\begin{equation}\label{functional}
L[g]:=\int_0^1 (1-s)\,\nu(ds).
\end{equation}
The following statement clarifies the meaning of $L.$

\begin{prop}\label{CMcor}
If $g\in \mathcal{B}_1$, $g\sim \nu$, then
$\Delta^g_1\in \Wip(\C_+)$ and
\begin{equation}\label{LemA2}
\|\Delta^g_1\|_{\Wip(\C_+)}=2 L[g].%\int_0^1 (1-s)\,\nu(ds),
\end{equation}

If $g\in \mathcal{B}_2$, then
\begin{equation}\label{cor120}
\|\Delta_1^g\|_{\Wip(\C_+)}\le (g''(0)-1)^{1/2},
\end{equation}
and
\begin{equation}\label{cor122}
\|\Delta_1^{g_n}\|_{\Wip(\C_+)}\le \left(\frac{g''(0)-1}{n}\right)^{1/2}.
\end{equation}
\end{prop}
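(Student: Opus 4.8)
The plan is to prove the three claims in order, with the first identity doing essentially all of the real work. Since $\Lap:\eM(\R_+)\to\Wip(\C_+)$ is an isometric isomorphism, the equality $\|\Delta_1^g\|_{\Wip(\C_+)}=2L[g]$ will follow as soon as I identify the (unique) measure on $\R_+$ whose Laplace transform is $\Delta_1^g$ and show that its total variation equals $2L[g]$. So the first and decisive step is to exhibit that representing measure explicitly.

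To find it, I would write $\Delta_1^g(z)=z^{-1}\Lap(\nu-\delta_1)(z)$ and convert the division by $z$ into an integration. Using $e^{-zs}/z=\int_s^\infty e^{-zu}\,du$ together with Fubini's theorem (legitimate for fixed $z>0$ because $|\nu-\delta_1|$ is a finite measure), one gets $\Delta_1^g=\Lap D$ with density $D(u)=(\nu-\delta_1)([0,u])=\nu([0,u])-\car_{[1,\infty)}(u)$. Alternatively one may start from Lemma \ref{T1Dop}, which gives $\Delta_2^g=\Lap G$, and integrate by parts in $\Delta_1^g(z)=z\,\Delta_2^g(z)=z\int_0^\infty e^{-zs}G(s)\,ds$; the boundary terms vanish since $G(0)=0$ and $G(s)\to 0$ as $s\to\infty$ (see \eqref{Gless3}), leaving $\Delta_1^g=\Lap(G')$ with $G'=D$ almost everywhere. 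Either way, $D\ge 0$ on $[0,1)$ where $D(u)=\nu([0,u])$, and $D\le 0$ on $(1,\infty)$ where $D(u)=-\nu((u,\infty))$.

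The total variation is then computed by a second application of Fubini: $\int_0^\infty|D(u)|\,du=\int_0^1\nu([0,u])\,du+\int_1^\infty\nu((u,\infty))\,du$, and each term equals $L[g]$ — the first directly from the definition \eqref{functional}, and the second after using $\int_0^\infty(s-1)\,\nu(ds)=0$ from \eqref{Rel1} to rewrite $\int_{(1,\infty)}(\tau-1)\,\nu(d\tau)=\int_{[0,1]}(1-\tau)\,\nu(d\tau)$. This yields $\|\Delta_1^g\|_{\Wip(\C_+)}=2L[g]$. For the second claim I would rewrite $2L[g]=\int_0^\infty|s-1|\,\nu(ds)$ (again via \eqref{Rel1}) and apply the Cauchy--Schwarz inequality to the pair $|s-1|$ and $1$, using $\int_0^\infty\nu(ds)=1$ and $\int_0^\infty(s-1)^2\,\nu(ds)=g''(0)-1$ from \eqref{secD}, to obtain $\|\Delta_1^g\|_{\Wip(\C_+)}\le(g''(0)-1)^{1/2}$. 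The third claim is then immediate: apply the second to $g_n\in\mathcal{B}_2$ and substitute $g_n''(0)-1=(g''(0)-1)/n$ from \eqref{h00}.

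The one point requiring genuine care — and the main obstacle — is justifying that $\Delta_1^g$ really belongs to $\Wip(\C_+)$ at all, i.e. that dividing $\Lap(\nu-\delta_1)$ by $z$ does not throw us out of the algebra; multiplication by $z^{-1}$ is not bounded on $\Wip(\C_+)$, so the argument must actually produce the finite representing measure. Finiteness is precisely the statement that $\int_0^\infty|D(u)|\,du=2L[g]<\infty$, which holds since $L[g]\le\nu([0,1])\le 1$. The vanishing of the boundary contribution at infinity — equivalently $D(u)\to 0$ as $u\to\infty$ — is what makes the Fubini (or integration-by-parts) step legitimate, and it rests exactly on the first-moment normalization in \eqref{Rel1}.
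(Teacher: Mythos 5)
Your proof is correct and follows essentially the same route as the paper: the density $D(s)=\nu([0,s])-\car_{[1,\infty)}(s)$ you extract is exactly the one appearing in the paper's displayed representation $\Delta_1^g(z)=\int_0^\infty e^{-zs}\int_0^s\nu(d\tau)\,ds-\int_1^\infty e^{-zs}\,ds$, and the norm computation via Fubini, the identity $2L[g]=\int_0^\infty|1-\tau|\,\nu(d\tau)$, the Cauchy--Schwarz step for \eqref{cor120}, and the scaling \eqref{h00} for \eqref{cor122} all match the paper's proof. Your alternative integration-by-parts derivation from Lemma \ref{T1Dop} and your explicit attention to why $\Delta_1^g\in\Wip(\C_+)$ (finiteness of $\int|D|$, resting on the first-moment normalization in \eqref{Rel1}) are harmless refinements of the same argument.
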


\begin{proof}
Note that
\begin{align*}
\Delta^g_1(z)=z^{-1} \Delta_0^g(z)
%\int_0^\infty e^{-z\tau} d\tau \int_0^\infty e^{-zs}
%\,\nu(ds) -\int_1^\infty e^{-zs}\,ds\\
=\int_0^\infty e^{-zs} \,\int_0^s  \,\nu(d\tau) \, ds-
\int_1^\infty e^{-zs}\,ds.
\end{align*}
Hence
\begin{align*}
\|\Delta^g_1\|_{\Wip(\C_+)}=&
\int_0^1 \int_0^s  \,\nu(d\tau) \, ds+
\int_1^\infty (1-\int_0^s\nu(d\tau))\,ds\\
%=&\int_0^1 \int_0^s  \,\nu(d\tau) \, ds+
%\int_1^\infty \int_s^\infty \nu(d\tau)\,ds\\
=&\int_0^\infty |1-\tau|\,\nu(d\tau)
=2\int_0^1 (1-\tau)\,\nu(d\tau),
\end{align*}
i.e. \eqref{LemA2} holds.

In view of \eqref{secD} and \eqref{Rel1}, the estimate (\ref{cor120})
follows from
 \eqref{LemA2}
by Cauchy's inequality.
\end{proof}

To get the convergence rates on the domains of fractional powers of (negative) semigroup generators,
we will need the next interpolation result which is a  partial case of
\cite[Lemma 4.1]{GTJFA}.

\begin{lemma}\label{C+global}
Let $F \in A^1_+(\mathbb{C}_+)$ be such that $z^{-1}F \in A^1_+(\mathbb{C}_+)$ and
$$
\| F \|_{A^1_+(\mathbb{C}_+)} = a \quad\mbox{and}
 \quad \|z^{-1} F \|_{A^1_+(\mathbb{C}_+)} =b.
 $$
Then for every $\alpha \in [0,1]$ one has $z^{-\alpha} F \in A^1_+(\mathbb{C}_+)$ and
\begin{equation}\label{314}
\| z^{-\alpha} F \|_{A^1_+(\mathbb{C}_+)} \leq 2^{1+\alpha}  a^{1-\alpha} b^{\alpha}.
\end{equation}
\end{lemma}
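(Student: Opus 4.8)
The plan is to use the Banach-algebra structure of $\Wip(\C_+)$ together with the subordination identity
\[
z^{-\alpha}=\frac{\sin\pi\alpha}{\pi}\int_0^\infty\frac{\lambda^{-\alpha}}{\lambda+z}\,d\lambda,\qquad z\in\C_+,\quad \alpha\in(0,1),
\]
which lets me write $z^{-\alpha}F$ as a superposition of the resolvent-type elements $F/(\lambda+z)$, each of which visibly lies in $\Wip(\C_+)$. The endpoints are trivial: for $\alpha=0$ and $\alpha=1$ the asserted bounds read $\|F\|_{\Wip(\C_+)}\le 2a$ and $\|z^{-1}F\|_{\Wip(\C_+)}\le 4b$, which hold by hypothesis, so I fix $\alpha\in(0,1)$.

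First I would record two complementary norm estimates for the building block $F/(\lambda+z)$, $\lambda>0$. Since $(\lambda+z)^{-1}=\Lap(e^{-\lambda s}\,ds)$ has $\Wip(\C_+)$-norm $\int_0^\infty e^{-\lambda s}\,ds=1/\lambda$, submultiplicativity gives $\|F/(\lambda+z)\|_{\Wip(\C_+)}\le a/\lambda$. For the second estimate I would write $F/(\lambda+z)=\frac{z}{\lambda+z}\,(z^{-1}F)$ and note $\frac{z}{\lambda+z}=1-\frac{\lambda}{\lambda+z}$; since $\|1\|_{\Wip(\C_+)}=1$ and $\|\frac{\lambda}{\lambda+z}\|_{\Wip(\C_+)}=\int_0^\infty\lambda e^{-\lambda s}\,ds=1$, the triangle inequality yields $\|\frac{z}{\lambda+z}\|_{\Wip(\C_+)}\le 2$ and hence $\|F/(\lambda+z)\|_{\Wip(\C_+)}\le 2b$. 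Together these give $\|F/(\lambda+z)\|_{\Wip(\C_+)}\le\min\{a/\lambda,\,2b\}$.

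Next I would feed these bounds into the subordination integral. The weight $\lambda\mapsto\lambda^{-\alpha}\min\{a/\lambda,2b\}$ is integrable on $(0,\infty)$ — the factor $\lambda^{-\alpha}$ tames the singularity at the origin because $\alpha<1$, while $a\lambda^{-1-\alpha}$ is integrable at infinity because $\alpha>0$ — so the integral converges absolutely as a Bochner integral in the Banach space $\Wip(\C_+)$. Its pointwise value on $\C_+$ is $z^{-\alpha}F$, so this simultaneously proves $z^{-\alpha}F\in\Wip(\C_+)$ and
\[
\|z^{-\alpha}F\|_{\Wip(\C_+)}\le\frac{\sin\pi\alpha}{\pi}\int_0^\infty\lambda^{-\alpha}\min\{a/\lambda,\,2b\}\,d\lambda.
\]
Splitting the integral at the crossover point $\lambda_0=a/(2b)$ and evaluating the two resulting power integrals gives the closed form $\frac{\sin\pi\alpha}{\pi\,\alpha(1-\alpha)}\,a^{1-\alpha}(2b)^\alpha$. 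The proof then closes with an elementary inequality: using $\sin\pi\alpha\le\pi\alpha$ and $\sin\pi\alpha=\sin\pi(1-\alpha)\le\pi(1-\alpha)$ one gets $\sin\pi\alpha\le\pi\min\{\alpha,1-\alpha\}$, whence
\[
\frac{\sin\pi\alpha}{\pi\,\alpha(1-\alpha)}\le\frac{1}{\max\{\alpha,1-\alpha\}}\le 2,
\]
and therefore $\|z^{-\alpha}F\|_{\Wip(\C_+)}\le 2\cdot 2^\alpha a^{1-\alpha}b^\alpha=2^{1+\alpha}a^{1-\alpha}b^\alpha$, as claimed.

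The one point requiring genuine care, rather than routine computation, is the passage from the scalar subordination identity to an identity in $\Wip(\C_+)$: I would check that $\lambda\mapsto F/(\lambda+z)$ is continuous from $(0,\infty)$ into $\Wip(\C_+)$ and that the absolute integrability above makes the Bochner integral well defined, and then invoke the fact that the isometric isomorphism $\Lap$ (equivalently, evaluation at points of $\C_+$) separates points of $\Wip(\C_+)$, so that equality of the two sides pointwise on $\C_+$ forces equality in the algebra. Everything else is elementary.
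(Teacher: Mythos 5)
Your proof is correct and is essentially the argument behind this lemma: the paper itself gives no proof, deferring to \cite[Lemma 4.1]{GTJFA}, and the proof there runs exactly along your lines --- the Balakrishnan subordination formula for $z^{-\alpha}$, the two complementary bounds $\|F/(\lambda+z)\|_{\Wip(\C_+)}\le\min\{a/\lambda,\,2b\}$ (the factor $2$ coming from $\|z/(\lambda+z)\|_{\Wip(\C_+)}\le 2$), and the splitting of the integral at $\lambda_0=a/(2b)$, which is precisely the source of the constant $2^{1+\alpha}$ in \eqref{314}. Your attention to the Bochner-integral justification and the endpoint cases $\alpha=0,1$ is also sound, so nothing further is needed.
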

Combining now Lemma \ref{C+global} with Proposition \ref{CMcor}
we estimate $\Delta_\alpha^g$ with $\alpha$ from either $[0,1]$ or $[0,2]$  depending
on the smoothness of $g$ at zero.

\begin{cor}\label{CintC}
If  $g \in \mathcal{B}_1$, then for every $\alpha \in [0,1],$
\begin{equation}\label{Int11}
\left\| \Delta_\alpha^g\right\|_{A^1_+(\mathbb{C}_+)}
\le 8 (L[g])^\alpha.
\end{equation}
If  moreover $g \in \mathcal{B}_2$, then for every $\alpha \in [0,2],$
\begin{equation}\label{Int12}
\left\| \Delta_\alpha^g\right\|_{A^1_+(\mathbb{C}_+)}
\le 4 \left(g''(0)-1\right)^\frac{\alpha}{2}.
\end{equation}
\end{cor}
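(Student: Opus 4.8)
The plan is to derive both bounds from the interpolation inequality of Lemma~\ref{C+global}, supplying its endpoint data from Proposition~\ref{CMcor} and Lemma~\ref{T1Dop}. Writing $\Delta_\alpha^g = z^{-\alpha}\Delta_0^g$ with $\Delta_0^g(z)=g(z)-e^{-z}$, I first note that $\Delta_0^g \in A^1_+(\mathbb{C}_+)$: indeed $g\in\mathcal{BM}\subset A^1_+(\mathbb{C}_+)$, while $z\mapsto e^{-z}=(\Lap\delta_1)(z)$ lies in $A^1_+(\mathbb{C}_+)$ as well, so the hypotheses of the interpolation lemma can be met once the relevant norms are identified.

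For \eqref{Int11} I would interpolate between $\alpha=0$ and $\alpha=1$ taking $F=\Delta_0^g$. The base norm is controlled by the triangle inequality together with \eqref{g0}: since $\|g\|_{\Wip(\C_+)}=g(0)=1$ and $\|\Lap\delta_1\|_{\Wip(\C_+)}=|\delta_1|(\R_+)=1$, one has $\|\Delta_0^g\|_{\Wip(\C_+)}\le 2=:a$. Next $z^{-1}F=\Delta_1^g$, and \eqref{LemA2} gives $\|\Delta_1^g\|_{\Wip(\C_+)}=2L[g]=:b$. Feeding $a,b$ into \eqref{314} yields, for $\alpha\in[0,1]$, $\|\Delta_\alpha^g\|_{\Wip(\C_+)}\le 2^{1+\alpha}a^{1-\alpha}b^\alpha=2^{2+\alpha}(L[g])^\alpha\le 8(L[g])^\alpha$, which is \eqref{Int11}.

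For \eqref{Int12} the range $\alpha\in[0,2]$ exceeds the reach of a single use of Lemma~\ref{C+global}, so I would split at $\alpha=1$ and shift the base point. On $[0,1]$ I keep $F=\Delta_0^g$ with $a\le 2$ but invoke the sharper endpoint \eqref{cor120}, namely $b=\|\Delta_1^g\|_{\Wip(\C_+)}\le(g''(0)-1)^{1/2}$; then \eqref{314} gives $\|\Delta_\alpha^g\|_{\Wip(\C_+)}\le 2^{1+\alpha}\cdot 2^{1-\alpha}(g''(0)-1)^{\alpha/2}=4(g''(0)-1)^{\alpha/2}$. On $[1,2]$ I re-centre at $F=\Delta_1^g$, so that $z^{-1}F=\Delta_2^g$, with $a=(g''(0)-1)^{1/2}$ from \eqref{cor120} and $b=\|\Delta_2^g\|_{\Wip(\C_+)}=(g''(0)-1)/2$ from \eqref{cor12}. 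Writing $\alpha=1+\beta$ with $\beta\in[0,1]$ and applying \eqref{314} to $z^{-\beta}F=\Delta_\alpha^g$, the powers of $g''(0)-1$ and of $2$ collapse to give $\|\Delta_\alpha^g\|_{\Wip(\C_+)}\le 2(g''(0)-1)^{\alpha/2}$. Both pieces are dominated by $4(g''(0)-1)^{\alpha/2}$, which is \eqref{Int12}.

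There is no genuine analytic obstacle, since every endpoint norm has already been computed in the preceding results; the argument is essentially interpolation bookkeeping together with checking the memberships that let Lemma~\ref{C+global} apply. The one point needing attention is that the interpolation lemma only produces $z^{-\alpha}F$ for $\alpha\in[0,1]$, so reaching $\alpha=2$ forces the two-interval decomposition and the re-centring at $F=\Delta_1^g$; one must also verify at the outset that $\Delta_0^g\in A^1_+(\mathbb{C}_+)$, as noted above, so that the hypotheses are genuinely satisfied.
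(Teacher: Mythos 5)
Your proposal is correct and follows essentially the same route as the paper: both interpolate via Lemma~\ref{C+global} with $F=\Delta_0^g$ on $[0,1]$ (using $\|\Delta_0^g\|_{\Wip(\C_+)}\le 2$ together with \eqref{LemA2}, respectively \eqref{cor120}) and re-centre at $F=\Delta_1^g$ on $(1,2]$ with the endpoint data \eqref{cor120} and \eqref{cor12}, arriving at the same constants $8$, $4$, and $2$. Your explicit check that $\Delta_0^g\in A^1_+(\mathbb{C}_+)$ is a harmless addition the paper leaves implicit.
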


\begin{proof}
First, let $\alpha \in [0,1]$ be fixed and $g \in \mathcal B_1.$ Observe that
\begin{equation}\label{note1}
\left\| \Delta_0^g \right\|_{A^1_+(\mathbb{C}_+)} \le \left\| g\right\|_{A^1_+(\mathbb{C}_+)} + \left\| e ^{-z} \right\|_{A^1_+(\mathbb{C}_+)} =2,
\end{equation}
and $\|\Delta^g_1\|_{\Wip(\C_+)}=2 L[g]$ by Corollary \ref{CMcor}.
Then, using   Lemma \ref{C+global} with
  $F=\Delta_0^g$, we obtain
  %for every $\alpha\in [0,1]$
  that
\[
\left\| \Delta_\alpha^g  \right\|_{A^1_+(\mathbb{C}_+)}
\le 4\|\Delta_1^g\|^\alpha_{A^1_+(\mathbb{C}_+)}.
\]
Thus, for $\alpha \in [0,1],$
\eqref{LemA2}  implies \eqref{Int11},  and  \eqref{Int12} follows from \eqref{cor120}.

Next, if  $\alpha\in (1,2]$ and  $g \in \mathcal{B}_2$, then
 \eqref{cor120}, \eqref{cor12},
 and  Lemma \ref{C+global} with
  $F=\Delta^g_1$
 yield
 \begin{align*}
\left\| \Delta_\alpha^g  \right\|_{A^1_+(\mathbb{C}_+)}
=&\left\|z^{-\alpha+1} \Delta_1^g  \right\|_{A^1_+(\mathbb{C}_+)}\\
\le& 2^{ \alpha}
\left(g''(0)-1\right)^{(2-\alpha)/2}
\left(\frac{g''(0)-1}{2}\right)^{\alpha-1}\\
= & 2 \left(g''(0)-1\right)^\frac{\alpha}{2}.
\end{align*}
\end{proof}

The estimate in \eqref{Int11} looks rather implicit. So we proceed with giving explicit bounds for  $L[g]$ when $g \in \mathcal B_1.$

\begin{lemma}\label{Lem22}
For every $g\in \mathcal{B}_1$ one has
\begin{equation}\label{22est}
L[g]\le
\left(
\frac{(1+g'(1))\int_0^1 g(s)\,ds}{(1-g(1))^2}-1\right)^{1/2}
+2e\int_0^1 (g(s)+g'(s))\,ds.
\end{equation}
\end{lemma}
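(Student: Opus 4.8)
The plan is to work entirely on the representing measure. Since $L[g]=\int_0^1(1-s)\,\nu(ds)$ by \eqref{functional}, and $\int_0^\infty(1-s)\,\nu(ds)=0$ by \eqref{Rel1}, the quantity to bound is $P:=L[g]=\int_0^1(1-\tau)\,\nu(d\tau)=\int_1^\infty(\tau-1)\,\nu(d\tau)$. First I would rewrite the three data appearing on the right-hand side as moments of $\nu$: using \eqref{CMLaplace} one checks $A:=1+g'(1)=\int_0^\infty\tau(1-e^{-\tau})\,\nu(d\tau)$ (here \eqref{Rel1} gives $\int\tau\,\nu=1$), $B:=\int_0^1 g(s)\,ds=\int_0^\infty\frac{1-e^{-\tau}}{\tau}\,\nu(d\tau)$, and $C:=1-g(1)=\int_0^\infty(1-e^{-\tau})\,\nu(d\tau)$; moreover, integrating $g+g'$ directly, $\int_0^1(g(s)+g'(s))\,ds=B-C=\int_0^\infty(1-\tau)\frac{1-e^{-\tau}}{\tau}\,\nu(d\tau)\ge 0$, the nonnegativity also following from Lemma~\ref{LB1}. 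Thus the asserted estimate reads $P\le\sqrt{AB/C^2-1}+2e(B-C)$.

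The square-root term comes from a Cauchy--Schwarz defect. Writing $C=\int_0^\infty\sqrt{\tau(1-e^{-\tau})}\cdot\sqrt{(1-e^{-\tau})/\tau}\,\nu(d\tau)$ and applying Cauchy--Schwarz gives $C^2\le AB$, so $AB/C^2-1\ge 0$ and the root is well defined. The natural object is the tilted probability measure $d\mu:=C^{-1}(1-e^{-\tau})\,\nu(d\tau)$, for which $\int\tau\,d\mu=A/C$ and $\int\tau^{-1}\,d\mu=B/C$, so that $AB/C^2-1=(\int\tau\,d\mu)(\int\tau^{-1}\,d\mu)-1$ is exactly the Cauchy--Schwarz defect measuring the dispersion of $\tau$ under $\mu$. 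I would use this to control the part of $P$ carried by the mass of $\nu$ bounded away from the origin, where the weight $1-e^{-\tau}$ is comparable to $1$ and $\mu$ faithfully represents $\nu$.

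The correction term $2e(B-C)$ is designed to absorb the mass of $\nu$ near the origin, which the tilted measure suppresses. The point is the elementary bound $\frac{1-e^{-\tau}}{\tau}\ge e^{-\tau}\ge e^{-1}$ for $\tau\in(0,1]$ (equivalently $e^{\tau}-1\ge\tau$), so that on $[0,1]$ the weight $(1-\tau)\frac{1-e^{-\tau}}{\tau}$ is comparable, up to the factor $e$, to the weight $(1-\tau)$ defining $P$; this is what converts the near-origin contribution of $P$ into a multiple of $\int_0^1(g+g')\,ds=B-C$. Concretely, I would fix a threshold and split $P=\int_0^\theta(1-\tau)\,\nu+\int_\theta^1(1-\tau)\,\nu$, bound the first piece by $e\int_0^\theta(1-\tau)\frac{1-e^{-\tau}}{\tau}\,\nu$ and then by a multiple of $B-C$, and estimate the second piece through the dispersion of $\tau$ under $\mu$, i.e. by $\sqrt{AB/C^2-1}$.

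The main obstacle is the bookkeeping of the tail $\int_1^\infty(\tau-1)\frac{1-e^{-\tau}}{\tau}\,\nu(d\tau)$, which enters because $B-C$ carries a cancelling negative contribution from $\tau>1$, so $\int_0^1(1-\tau)\frac{1-e^{-\tau}}{\tau}\,\nu$ is not literally $\le B-C$. Controlling this tail against the Cauchy--Schwarz defect, and choosing the threshold so that the two crude estimates combine with the stated constants (the factor $2e$, and the exact form $\sqrt{AB/C^2-1}$ rather than a coarser dispersion proxy), is the delicate part; everything else reduces to the routine moment computation and the two applications of Fubini's theorem already used in Lemmas~\ref{T1Dop} and~\ref{LB1}.
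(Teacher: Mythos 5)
Your moment identities are all correct ($A=1+g'(1)=\int_0^\infty\tau(1-e^{-\tau})\,\nu(d\tau)$, $B=\int_0^1 g$, $C=1-g(1)$, $B-C=\int_0^\infty(1-\tau)\frac{1-e^{-\tau}}{\tau}\,\nu(d\tau)\ge0$, and $C^2\le AB$ by Cauchy--Schwarz), and the quantity $AB/C^2-1$ is indeed the right one. But the proof has a genuine gap at exactly the two places you flag and then defer. First, your tilted measure $d\mu=C^{-1}(1-e^{-\tau})\,\nu(d\tau)$ is not normalized in location: it has total mass $1$ but mean $A/C\neq1$ in general, and the defect $\bigl(\int\tau\,d\mu\bigr)\bigl(\int\tau^{-1}\,d\mu\bigr)-1$ measures a multiplicative dispersion with no reference point. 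There is no direct route from it to a bound on $\int_\theta^1(1-\tau)\,\nu(d\tau)$, a deviation around the point $1$, with constant $1$ in front of the square root. Second, the tail $\int_1^\infty(\tau-1)\frac{1-e^{-\tau}}{\tau}\,\nu(d\tau)$, which prevents $\int_0^\theta(1-\tau)\frac{1-e^{-\tau}}{\tau}\,\nu$ from being dominated by $B-C$, is left uncontrolled; as written, the threshold split cannot be closed, and even the near-origin estimate $\frac{1-e^{-\tau}}{\tau}\ge e^{-1}$ yields the factor $e$, not the $2e$ of \eqref{22est}. What you have is a program, not a proof.

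The paper closes precisely these gaps by two moves absent from your sketch. (i) It uses the weight $\frac{1-e^{-(\gamma/\beta)s}}{\gamma s}$ \emph{together with the rescaling} $s\mapsto(\gamma/\beta)s$, where $\beta=B$, $\gamma=C$: the resulting measure $\tilde\nu(ds)=\frac{1-e^{-(\gamma/\beta)s}}{\gamma s}\,\nu((\gamma/\beta)\,ds)$ represents the averaged function $\tilde g(z)=\beta^{-1}\int_{(\beta/\gamma)z}^{(\beta/\gamma)z+1}g(s)\,ds$, which lies again in $\mathcal{B}_1$ (mass $1$ \emph{and} mean $1$) with $\tilde g''(0)-1=\frac{AB}{C^2}-1$ exactly. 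The mean-one normalization makes the square-root term come out cleanly via \eqref{secD} and \eqref{cor120}: $2L[\tilde g]=\int_0^\infty|1-s|\,\tilde\nu(ds)\le(\tilde g''(0)-1)^{1/2}$, with no threshold at all. (ii) Instead of splitting $L[g]$ and confronting the tail, the paper compares $L[g]$ with $L[\tilde g]$ globally: a pointwise bound $Q(s)\le\bigl(1-\frac{\gamma}{\beta}\bigr)+\frac{1-s}{2}$ on $[0,\gamma/\beta]$ gives $L[g]-L[\tilde g]\le\bigl(1-\frac{\gamma}{\beta}\bigr)+\frac12L[g]$, and \emph{absorbing} $\frac12L[g]$ into the left-hand side produces $L[g]\le2L[\tilde g]+2\bigl(1-\frac{\gamma}{\beta}\bigr)$; the factor $2$ here, combined with $1-\gamma/\beta\le(B-C)/g(1)\le e(B-C)$ (using $g\ge e^{-z}$, i.e.\ \eqref{pos1}), is the true source of the constant $2e$. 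The missing ideas in your proposal are thus the mean-one rescaling of the tilted measure and this self-absorption step; without them your two crude estimates cannot be combined to yield \eqref{22est} with the stated constants.
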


\begin{proof}
Let $g\in\mathcal{B}_1$, $g\sim\nu$.
Set
\[\beta=\int_0^1 g(s)\,ds,\qquad
\gamma=1-g(1),
\]
and define
\[
\tilde{g}(z)=\frac{1}{\beta}
\int_{\beta/\gamma z}^{\beta/\gamma z+1} g(s)\,ds, \qquad z >0.
\]
We will estimate $L[g]$ in terms of $L[\tilde g],\beta$ and $\gamma.$
Then the bound \eqref{22est} will follow easily.

Note that
$
0< \gamma \le \beta\le 1,
$
and
\begin{equation}\label{Ddd}
\tilde{g}\in \mathcal{B}_2,\qquad
\tilde{g}''(0)=\frac{\beta(1+g'(1))}{\gamma^2}.
\end{equation}
Moreover, we have
\[
\tilde{g}(z)=\frac{1}{\beta}
\int_0^\infty e^{-(\beta/\gamma)  z s}
\frac{(1-e^{-s})}{s}\,\nu(ds)
=\int_0^\infty e^{-z s}\,
\tilde{\nu}(ds), \qquad z >0,
\]
where
\[
\tilde{\nu}(ds)=\frac{(1-e^{-(\gamma/\beta)s})}{\gamma s}\,
\nu((\gamma/\beta)ds).
\]

Since $\beta\le 1$,
\begin{align*}
L[\tilde{g}]=&
\int_0^1 (1-s)\,
\frac{(1-e^{-(\gamma/\beta)s })}{\gamma s}\,
\nu((\gamma/\beta)\,ds)\\
=&\int_0^{\gamma/\beta} \left(1-s\frac{\beta}{\gamma}\right)\,
\frac{(1-e^{-s })}{\beta s}\,
\nu(\,ds)\\
\ge& \int_0^{\gamma/\beta} \left(1-s\frac{\beta}{\gamma}\right)\,
\frac{(1-e^{-\beta s })}{\beta s}\,
\nu(\,ds),
\end{align*}
hence
\begin{align*}
L[g]-L[\tilde{g}]\le& \int_0^1 (1-s)\,\nu(ds)
-\int_0^{\gamma/\beta} \left(1-s\frac{\beta}{\gamma}\right)\,
\frac{(1-e^{-\beta s })}{\beta s}\,
\nu(ds)\\
=&\int_{\gamma/\beta}^1 (1-s)\,\nu(ds)
+\int_0^{\gamma/\beta}
Q(s)\,
\nu(ds),
\end{align*}
where
\begin{align*}
Q(s)=&(1-s)-\left(1-s\frac{\beta}{\gamma}\right)\,
\frac{(1-e^{-\beta s })}{\beta s}\\
=&\left(\frac{\beta}{\gamma}-1\right)
\frac{(1-e^{-\beta s })}{\beta}
+(1-s)\left\{1-
\frac{(1-e^{-\beta s })}{\beta s}\right\}.
\end{align*}
Observe now that for every $s\in [0,\gamma/\beta],$
\[
Q(s)\le  \left(\frac{\beta}{\gamma}-1\right)
s+(1-s)\frac{\beta s}{2}
\le \left(1-\frac{\gamma}{\beta}\right)
+\frac{(1-s)}{2}.
\]
Thus
\[
L[g]-L[\tilde{g}]
\le  \left(1-\frac{\gamma}{\beta}\right)+
\frac{1}{2}L[g],
\]
and then, in view of \eqref{secD} and \eqref{cor120},
\begin{align}\label{ELL}
L[g]\le& 2L[\tilde{g}]+ 2\left(1-\frac{\gamma}{\beta}\right)\\
\le& \left(\int_0^\infty (1-s)^2\,\tilde{\nu}(ds)\right)^{1/2}+
2\left(1-\frac{\gamma}{\beta}\right)\notag \\
=& (\tilde{g}''(0)-1)^{1/2}+
2\left(1-\frac{\gamma}{\beta}\right).\notag
\end{align}
Since, recalling the definition of $\gamma$ and $\beta,$
\begin{align*}\label{betag}
1-\frac{\gamma}{\beta}=&
\frac{\int_0^1 g(s)\,ds-1+g(1)}
{\int_0^1 g(s)\,ds}
\le \frac{\int_0^1 g(s)\,ds-1+g(1)}
{g(1)}\\
\le& e \int_0^1 (g(s)+g'(s))\,ds, \notag
\end{align*}
the statement
follows from
\eqref{ELL} and
\eqref{Ddd}.
\end{proof}

Now we are able to describe the convergence rate for $(g_n)_{n \ge 1}$ in \eqref{LBB}
by means of the boundary behavior of $g'$ at zero.

\begin{cor}\label{diffC}
Let $g\in \mathcal{B}_1$. Then for all $n \in \mathbb N,$
\begin{equation}\label{holA}
%\|\Delta_1^{g_n}\|_{A^1_+(\mathbb{C}_+)}=
L[g_n]
\le 2e\left(1+\frac{1}{|g'(1/n)|}\right)
\sqrt{1+g'(1/n)}.
\end{equation}
\end{cor}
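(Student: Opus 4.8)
The goal is to bound $L[g_n]$ for $g \in \mathcal{B}_1$, and the natural tool is Lemma~\ref{Lem22}, which provides an estimate of $L[h]$ for any $h \in \mathcal{B}_1$ in terms of $h(1)$, $h'(1)$, and $\int_0^1 h(s)\,ds$.  The plan is to apply this lemma to $h = g_n$, and then to rewrite the resulting quantities in terms of $g'(1/n)$ alone, using the definition $g_n(z) = g^n(z/n)$.  The key algebraic inputs are the evaluations $g_n(1) = g^n(1/n)$ and, by the chain rule, $g_n'(z) = g^{n-1}(z/n)\,g'(z/n)$, so that $g_n'(1) = g^{n-1}(1/n)\,g'(1/n)$.

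\textbf{Main steps.}  First I would substitute $h = g_n$ into \eqref{22est}, giving
\[
L[g_n] \le \left(\frac{(1+g_n'(1))\int_0^1 g_n(s)\,ds}{(1-g_n(1))^2}-1\right)^{1/2}
+ 2e\int_0^1 \bigl(g_n(s)+g_n'(s)\bigr)\,ds.
\]
The second step is to control the second term.  Since $(g_n(s)+g_n'(s))\,ds$ integrates a derivative-type expression, and in fact $g_n + g_n' \ge 0$ with $g_n$ decreasing, the integral $\int_0^1(g_n+g_n')\,ds$ should reduce to something manageable; the cleanest route is to observe $\int_0^1 g_n'(s)\,ds = g_n(1)-g_n(0) = g^n(1/n)-1$ and bound $\int_0^1 g_n(s)\,ds \le 1$, but I expect the sharper handling comes from writing $g_n(s)+g_n'(s)$ via $s_{g_n}$ from Lemma~\ref{LB1} and exploiting positivity.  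The third step is to estimate the first (square-root) term by crude but uniform bounds: use $0 \le g_n(s) \le 1$ to get $\int_0^1 g_n(s)\,ds \le 1$, and control $1 - g_n(1) = 1 - g^n(1/n)$ from below.  Here I would use $1 - g^n(1/n) \ge n\,|g'(1/n)|\,\cdot(\text{something})$ or, more simply, the telescoping/convexity bound $1 - g^n(1/n) \ge 1 - g(1/n) \ge \dots$; the factor $1+1/|g'(1/n)|$ in the target \eqref{holA} strongly suggests that $1-g_n(1)$ gets bounded below by a quantity proportional to $|g'(1/n)|/(1+|g'(1/n)|)$ or similar.

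\textbf{The main obstacle.}  The delicate part will be matching the exact shape of the right-hand side of \eqref{holA}, namely the factor $\bigl(1 + 1/|g'(1/n)|\bigr)\sqrt{1+g'(1/n)}$.  The $\sqrt{1+g'(1/n)}$ factor must emerge from the square-root term in Lemma~\ref{Lem22}, which means I need to show that the bracketed expression there is dominated by a constant multiple of $1 + g_n'(1) = 1 + g^{n-1}(1/n)g'(1/n)$, and then relate $1+g_n'(1)$ back to $1 + g'(1/n)$ (noting $g^{n-1}(1/n) \le 1$, so $1+g_n'(1) \ge 1 + g'(1/n)$, which is the wrong direction and will require care).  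Meanwhile the $(1+1/|g'(1/n)|)$ prefactor should arise from bounding $1/(1-g_n(1))$.  The crux is therefore a careful asymptotic analysis of how the three ingredients $g_n(1)$, $g_n'(1)$, $\int_0^1 g_n$ behave as functions of $g'(1/n)$; the inequalities \eqref{defB}, \eqref{defB0} together with the monotonicity of $g$ and $1+g'$ (both established earlier) will be the workhorses, and the challenge is combining them without losing the precise power $1/2$ and the precise prefactor.
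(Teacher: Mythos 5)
You have correctly identified the paper's tool: the proof is exactly an application of Lemma~\ref{Lem22} to $h=g_n$, with $g_n'(1)=g^{n-1}(1/n)g'(1/n)$, a lower bound on $1-g_n(1)$, and monotonicity of $g$ and $1+g'$ as workhorses. But your plan leaves unresolved precisely the steps that carry the proof, and the fallback you propose for the square-root term would fail. Bounding $\int_0^1 g_n(s)\,ds\le 1$ crudely and then estimating $\frac{1+g_n'(1)}{(1-g_n(1))^2}-1$ does not work: for $g(z)=e^{-z}$ one has $g_n=g$, $1+g_n'(1)=1-e^{-1}$ and $1-g_n(1)=1-e^{-1}$, so this crude quantity is $\frac{e^{-1}}{1-e^{-1}}$, of order one, whereas the right-hand side of \eqref{holA} is of order $n^{-1/2}$ (and indeed $L[g_n]=0$ here). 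The whole point is a near-cancellation between the numerator and the denominator inside the bracket of \eqref{22est}: the paper shows the combined quantity
\begin{equation*}
\bigl(1+g_n'(1)\bigr)\int_0^1 g_n(s)\,ds-\bigl(1-g_n(1)\bigr)^2\le 2\bigl(1+g'(1/n)\bigr),
\end{equation*}
via the algebraic identity $1+g_n'(1)=g^{n-1}(1/n)\bigl[g(1/n)+g'(1/n)\bigr]+\bigl(1-g^n(1/n)\bigr)$, so that the potentially large term $(1-g^n(1/n))\int_0^1 g_n(s)\,ds$ is paired against $(1-g_n(1))^2$, and the residue $\int_0^1 g^n(s/n)\,ds-(1-g^n(1/n))$ is controlled by the closeness of $g_n$ to $e^{-\cdot}$ from Proposition~\ref{exp}, i.e.\ \eqref{LB} with $t=s$ and $t=1$. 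You sensed the obstruction ($1+g_n'(1)\ge 1+g'(1/n)$ points the wrong way) but did not supply this decomposition, which is the one genuinely nontrivial idea.

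Two further ingredients are missing or under-specified in your plan. First, the lower bound on the denominator: neither your telescoping guess $1-g^n(1/n)\ge 1-g(1/n)$ (which only gives $\ge n^{-1}|g'(1/n)|$, too weak by a factor $n$) nor a bound proportional to $|g'(1/n)|/(1+|g'(1/n)|)$ is what is needed; the correct estimate is $1-g_n(1)=\int_0^1 g^{n-1}(s/n)\,|g'(s/n)|\,ds\ge e^{-1}|g'(1/n)|$, using $g(1/n)\ge 1-1/n$ (hence $g^{n-1}(1/n)\ge e^{-1}$) and the fact that $|g'|$ is nonincreasing. Second, for the term $2e\int_0^1(g_n(s)+g_n'(s))\,ds$, your bound $\int_0^1 g_n'\,ds=g_n(1)-1$ combined with $\int_0^1 g_n\le 1$ only yields a bound of order one, and Lemma~\ref{LB1} gives no quantitative norm bound for $g\in\mathcal B_1$; what is needed is the pointwise estimate $g_n(s)+g_n'(s)=g^{n-1}(s/n)\bigl(g(s/n)+g'(s/n)\bigr)\le 1+g'(1/n)$ for $s\in[0,1]$, again by monotonicity of $g'$. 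With these three estimates in place one gets $L[g_n]\le e\sqrt{2}\,(1+g'(1/n))^{1/2}|g'(1/n)|^{-1}+2e(1+g'(1/n))$, from which \eqref{holA} follows since $1+g'(1/n)\le\sqrt{1+g'(1/n)}$. As it stands, your proposal is the right frame with the load-bearing estimates absent.
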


\begin{remark}
It is instructive to note that by \eqref{Gg} for each $g \in \mathcal B_1$ one has $\lim_{n \to \infty}(1+g'(1/n))=0.$
\end{remark}

\begin{proof}
First observe that for every  $n\in \N,$
\[
1-g_n(1)=\int_0^1 g^{n-1}(s/n)|g'(s/n)|\,ds
\ge e^{-1}|g'(1/n)|.
\]

Next, using the inequalities \eqref{LB} for $t=1$ and an obvious bound $g \le 1$, we obtain
\begin{align*}
&(1+g_n'(1/n))\int_0^1 g_n(s)\,ds-(1-g_n(1))^2\\
=&g^{n-1}(1/n)[g(1/n)+g'(1/n)]\int_0^1 g^n(s/n)\,ds\\
+&(1-g^n(1/n))
\left(\int_0^1 g^n(s/n)\,ds-(1-g^n(1/n))\right)\\
\le& [1+g'(1/n)]
+(1-e^{-1})
\left(
\int_0^1 s(1+g'(s/n))\,ds+
g^n(1/n))-e^{-1}\right)\\
\le& (1+3(1-e^{-1})/2)  (1+g'(1/n))\\
\le&
2 (1+g'(1/n)), \qquad n \in \N.
\end{align*}
So, by \eqref{22est}, for every $n \in \N,$
\[
L[g_n]\le
e\sqrt{2} \frac{(1+g'(1/n))^{1/2}}{|g'(1/n)|}
+2e(1+g'(1/n)),
\]
%Since by \eqref{LemA2} we have $\|\Delta_1^{g_n}\|_{A^1_+(\mathbb{C}_+)}=
%2L[g_n]$,
and \eqref{holA} follows.
\end{proof}
Thus if $g''(0)=\infty$ then the convergence rate in Corollary \ref{diffC} depends on the behavior of $g$
in a neighborhood of zero. This is in contrast to Proposition \ref{CMcor} where the assumption $g''(0) < \infty$ yields a rate estimate
independent of $g$ (up to a constant).
The rates coincide when the latter condition holds, and they are optimal in this case, see Proposition \ref{Opt1}.

We will use Corollary \ref{diffC} to characterize polynomial decay rates
for $\|\Delta_1^{g_n}\|_{A^1_+(\mathbb{C}_+)}.$ To this aim the next lemma
will be crucial.

\begin{lemma}\label{LemmaN}
Let $f\in \mathcal{CM}$, $f\sim \mu,$ and
\begin{equation}\label{IntFF}
\int_0^1 f(s)\,ds<\infty.
\end{equation}
If $\gamma\in (0,1)$, the  the following conditions are equivalent.
\begin{itemize}
\item [\emph{(i)}] There exists $c_1>0$ such that $ f(\tau)\le c_1\tau^{-1+\gamma},\qquad \tau\in (0,1).$
\item [\emph{(ii)}]  There exists $c_2>0$ such that $\int_0^s \mu(d\tau)\le c_2 s^{1-\gamma},\qquad s\ge 1.$
\item [\emph{(iii)}]  There exists $c_3>0$ such that $\int_0^{1/n} f(\tau)\,d\tau\le c_3n^{-\gamma},\qquad n\in \N.$
\end{itemize}
\end{lemma}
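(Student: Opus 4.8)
The plan is to establish the three implications (i)$\Rightarrow$(ii)$\Rightarrow$(iii)$\Rightarrow$(i), using the Laplace-transform representation $f=\Lap\mu$ throughout and a standard Abelian/Tauberian-type dictionary between the decay of $f$ near zero and the growth of the cumulative measure $\int_0^s\mu(d\tau)$. The hypothesis \eqref{IntFF} guarantees that all the integrals appearing below are finite, so the manipulations are legitimate.

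For (i)$\Rightarrow$(ii), I would exploit positivity of $\mu$ to bound the tail of $\mu$ from below by $f$. Specifically, since $e^{-\tau s}\ge e^{-1}$ for $\tau s\le 1$, for any $s\ge 1$ one has
\[
f(1/s)=\int_0^\infty e^{-\tau/s}\,\mu(d\tau)\ge e^{-1}\int_0^s\mu(d\tau).
\]
Combining this with the bound $f(1/s)\le c_1 s^{1-\gamma}$ from (i) (applied at $\tau=1/s\in(0,1)$) gives $\int_0^s\mu(d\tau)\le e\,c_1 s^{1-\gamma}$, which is exactly (ii) with $c_2=e\,c_1$.

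For (ii)$\Rightarrow$(iii), I would write $\int_0^{1/n}f(\tau)\,d\tau$ as a double integral and apply Fubini together with the integrated form of (ii). Using $\int_0^{1/n}e^{-\tau s}\,d\tau=s^{-1}(1-e^{-s/n})$ and the positivity of $\mu$, one obtains
\[
\int_0^{1/n}f(\tau)\,d\tau=\int_0^\infty \frac{1-e^{-s/n}}{s}\,\mu(d\tau)
\]
(integrated against $\mu$), and the factor $(1-e^{-s/n})/s$ is $O(1/n)$ for $s\le n$ and $O(1/s)$ for $s>n$. Splitting the $\mu$-integral at $s=n$ and inserting the growth bound from (ii) (summing dyadically, or integrating by parts in the Stieltjes sense) yields the estimate $\int_0^{1/n}f(\tau)\,d\tau\le c_3 n^{-\gamma}$. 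I expect the dyadic decomposition of the tail $\int_{s>n}$ to be the one spot needing care, since one must check that $\sum_k 2^{-k}(2^k n)^{1-\gamma}$ converges to something of order $n^{-\gamma}$, which it does precisely because $\gamma\in(0,1)$.

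Finally, for (iii)$\Rightarrow$(i), I would use that $f$ is non-increasing (being completely monotone, hence in particular decreasing) to convert an averaged bound into a pointwise one. For $\tau\in(0,1)$ choose $n\in\N$ with $1/(n+1)<\tau\le 1/n$; then monotonicity gives
\[
\tau f(\tau)\le \int_0^{\tau}f(u)\,du\le \int_0^{1/n}f(u)\,du\le c_3 n^{-\gamma},
\]
and since $n^{-\gamma}\le (1/\tau)^{-\gamma}\cdot(\text{const})=\,$const$\cdot\tau^{\gamma}$ (using $n\ge 1/(2\tau)$ for $\tau$ small, say $n+1\le 2n$), we conclude $f(\tau)\le c_1\tau^{-1+\gamma}$. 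The main obstacle, such as it is, lies in the Tauberian direction (iii)$\Rightarrow$(i): one cannot in general pass from an averaged decay to a pointwise one, and here the monotonicity of completely monotone $f$ is exactly what rescues the argument. Keeping track of the comparison between $n$ and $1/\tau$ (so that the powers of $\tau$ match up with the correct absolute constants) is the only delicate bookkeeping, and everything else is a routine application of Fubini and positivity of $\mu$.
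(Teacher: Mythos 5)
Your proposal is correct and follows essentially the same route as the paper: the same cycle (i)$\Rightarrow$(ii)$\Rightarrow$(iii)$\Rightarrow$(i), with (i)$\Rightarrow$(ii) via $e^{-\tau/s}\ge e^{-1}$ on $[0,s]$ and positivity of $\mu$, (ii)$\Rightarrow$(iii) via Fubini applied to the kernel $(1-e^{-s/n})/s$, and (iii)$\Rightarrow$(i) via monotonicity of $f$ on the intervals $(1/(n+1),1/n]$. The only (immaterial) variation is in (ii)$\Rightarrow$(iii), where you split at $s=n$ and sum dyadically while the paper bounds the kernel by $2/(n+s)$, integrates by parts (using \eqref{IntFF} to kill the boundary term), and evaluates $\int_0^\infty s^{1-\gamma}(n+s)^{-2}\,ds$ explicitly as a Beta-type integral; both yield the same $n^{-\gamma}$ bound.
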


\begin{proof}
$(i)\Longrightarrow (ii)$:
Since for every  $s\ge 1,$
\[
\int_0^s\mu(d\tau)\le
e\int_0^\infty e^{-\tau/s}\,\mu(d\tau)=
ef(1/s)\le e c_1s^{1-\gamma},
\]
the implication follows.

$(ii)\Longrightarrow (iii):$
By \eqref{IntFF},
\[
\int_0^\infty \frac{\mu(d\tau)}{1+\tau}<\infty,
\]
therefore
\begin{equation}\label{Us1}
\lim_{s\to\infty}\,\frac{1}{s}\int_0^s \mu(d\tau)\le \lim_{s \to \infty} 2 \int_{0}^\infty \frac{(1+\tau)1_{[0,t]}(\tau)}{1+s}\frac{\mu(d \tau)}{1+\tau}=0,
\end{equation}
by the  bounded convergence theorem.
Moreover, by \eqref{pos3} we have
\[
\frac{1-e^{-s/n}}{s}\le \frac{2}{n+s},\qquad s>0,\quad n\in \N,
\]
hence, using (\ref{Us1}) and integrating by  parts, we obtain
for every $n\in \N$ and some $c_2>0$:
\begin{align*}
\int_0^{1/n} f(\tau)\,d\tau=&
\int_0^\infty \frac{1-e^{-s/n}}{s}\,\mu(ds)\le
2\int_0^\infty\frac{\mu(ds)}{n+s}\\
\le& 2\int_0^\infty \frac{\int_0^s \mu(d\tau) \, ds}{(n+s)^2}
\le c_2\int_0^\infty \frac{s^{1-\gamma}\,ds}{(n+s)^2}\\
=&c_2 \frac{\pi (1-\gamma)}{\sin(\pi\gamma)}n^{-\gamma}.
\end{align*}

$(iii)\Longrightarrow (i):$
If $t\in ((n+1)^{-1},n^{-1}), n \in \mathbb N$, then as  $f$ is non-increasing there is $c_3 >0$ such that for every $\tau\in (0,1],$
\begin{align*}
f(\tau)\le f((n+1)^{-1})\le(n+1)\int_0^{\frac{1}{n+1}}f(s)\,ds
\le
c_3 (n+1)^{1-\gamma}
%\le& 2c_3n^{1-\gamma}\le
 2c_3 \tau^{-1+\gamma}.
\end{align*}
\end{proof}

\begin{cor}\label{corNew}
Let $g \in \mathcal B_1, g \sim \nu.$
Then the following conditions are equivalent.
\begin{itemize}
\item [\emph{(i)}] There exists $c_1>0$ such that $g''(\tau)\le c_1\tau^{-1+\gamma},\quad \tau\in (0,1].$

\item [\emph{(ii)}] There exists $c_2>0$ such that $\int_0^s \tau^2\mu(d\tau)\le c_2 s^{1-\gamma},\quad s\ge 1.$

\item [\emph{(iii)}] There exists $c_3>0$ such that $\,  1+g'(1/n)\le c_3n^{-\gamma},\quad n\in \N.$
\end{itemize}
\end{cor}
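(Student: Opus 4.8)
The plan is to observe that Corollary \ref{corNew} is nothing more than Lemma \ref{LemmaN} applied to the completely monotone function $f:=g''$. First I would note that, since $g\sim\nu$, differentiating the representation \eqref{CMLaplace} twice under the integral sign yields
\[
g''(t)=\int_0^\infty s^2 e^{-ts}\,\nu(ds),\qquad t>0,
\]
so that $f=g''\in\mathcal{CM}$ with $f\sim\mu$, where $\mu(ds):=s^2\,\nu(ds)$. (This is the measure denoted $\mu$ in the statement of the corollary.)

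Next I would verify the standing hypothesis \eqref{IntFF} of Lemma \ref{LemmaN} for this $f$. Since $g\in\mathcal{B}_1$, the derivative $g'$ is non-decreasing (because $g''\ge 0$), with $g'(0^+)=-1$ and $-1\le g'(z)\le 0$ by \eqref{defB}. Consequently
\[
\int_0^1 f(s)\,ds=\int_0^1 g''(s)\,ds=g'(1)-g'(0^+)=1+g'(1)\le 1<\infty,
\]
so \eqref{IntFF} holds.

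It then remains only to match the three conditions, and this is where the identification $f=g''$ does all the work. Condition (i) of the corollary is literally condition (i) of Lemma \ref{LemmaN} for $f=g''$ (the interval $(0,1]$ instead of $(0,1)$ makes no difference by continuity of $g''$). Condition (ii) of the lemma reads $\int_0^s\mu(d\tau)\le c_2 s^{1-\gamma}$, and since $\mu(d\tau)=\tau^2\,\nu(d\tau)$ this is exactly condition (ii) of the corollary. Finally, the same elementary computation as above gives
\[
\int_0^{1/n} f(\tau)\,d\tau=g'(1/n)-g'(0^+)=1+g'(1/n),
\]
so condition (iii) of the lemma is precisely condition (iii) of the corollary. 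Invoking the equivalence in Lemma \ref{LemmaN} now finishes the proof. There is essentially no obstacle here: all of the analytic content (the bounded convergence and integration-by-parts arguments) is already carried out in Lemma \ref{LemmaN}, and the only genuinely new ingredients are the identification of the measure associated with $g''$ and the trivial evaluation of $\int_0^{1/n} g''$.
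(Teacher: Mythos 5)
Your proposal is correct and coincides with the paper's own proof: the paper likewise applies Lemma \ref{LemmaN} to $f=g''$, noting $g''\sim\tau^2\nu(d\tau)$, the integrability of $g''$ on $[0,1]$, and the identity $1+g'(1/n)=\int_0^{1/n}g''(\tau)\,d\tau$. The only caveat is notational: the $\mu$ appearing in condition (ii) of the corollary is the paper's (typographically slipped) name for $\nu$, so the lemma's measure $\tau^2\nu(d\tau)$ matches $\int_0^s\tau^2\nu(d\tau)$ exactly as you conclude, rather than being ``the measure denoted $\mu$ in the statement'' as your parenthetical suggests.
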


\begin{proof}
The statement follows from Lemma \ref{LemmaN}
applied  to $f=g''.$ It suffices to note that  $g''\in\mathcal{CM},$  $g''\sim \tau^2\nu(d\tau),$ $g''$ is integrable on $[0,1],$ and
\[
1+g'(1/n)=\int_0^{1/n} g''(\tau)\,d\tau,\qquad n\in \N.
\]
\end{proof}

Observe that
if
\[
g(z):=(1-\gamma)+\gamma(\gamma+1)\int_0^\infty\frac{e^{-zs}\,ds}{(s+1)^{2+\gamma}},\qquad\gamma\in (0,1),
\]
then $g \sim \nu,$
\[
\nu(ds):=(1-\gamma)\delta_0+\gamma(\gamma+1)\frac{ds}{(s+1)^{2+\gamma}},
\]
so $g\in \mathcal{B}_1\setminus \mathcal{B}_2.$
On the other hand, by Corollary \ref{corNew},
 \[
1+g'(1/n)\le cn^{-\gamma},\qquad \gamma\in (0,1),
\]
for some $c>0.$

\subsection{Estimates of functions for higher order approximations and constants}

To treat higher order approximations for bounded $C_0$-semigroups and obtain sharp  estimates of constants in our approximation formulas,
we will need several classes of completely monotone functions finer than $\mathcal B_1$ and $\mathcal B_2$.
For $k=3,4,$ let
\begin{align*}
\mathcal{B}_k=&\{g\in \mathcal{B}_2:\;\; |g^{(k)}(0)|<\infty\},\\
\mathcal{B}_{k,\infty}=&\left\{g\in \mathcal{B}_k:\;\; \int_1^\infty \frac{g(s)}{s}\,ds<\infty\right\}.
\end{align*}
Note that if $g \in \mathcal B_k$ belongs also to $L^m (0,\infty), m \in \N,$ then $g \in \mathcal{B}_{k,\infty}.$ The classes $\mathcal B_k, k \in \N,$ have been mentioned briefly in \cite{Lorch}.

For $g \in \mathcal B_2$ let the function $G$ be given by Lemma \ref{T1Dop}.
For the formulations of our higher order approximation results, it will be convenient
to introduce the following functionals:
\begin{align}
\label{defab1} a[g]:=&\int_0^\infty G(s)\,ds, \qquad g \in \mathcal B_2,\\
\label{defab2} b[g]:=&\int_0^\infty (1-s)
 G(s)\, ds, \qquad g \in \mathcal B_3.
\end{align}
Note that for each $g \in \mathcal B_2,$
\begin{align}\label{defab3}
a[g] =
\Delta_2^g(0)=\frac{g''(0)-1}{2},
\end{align}
and, since for $g \in \mathcal B_3,$
\[
\lim_{z\to 0+}\,\frac{d}{dz} \Delta^g_2(z)=
\frac{g'''(0)+1}{6},
\]
we have
\[
b[g]=\Delta_2^g(0)+\lim_{z\to 0}
\frac{d}{dz}\Delta_2^g(z)
=\frac{3g''(0)+g'''(0)-2}{6}.
\]

In order to obtain optimal bounds for  higher order approximations of holomorphic semigroups, we will need yet another family of
 auxiliary functionals on $B_{2,\infty}.$

For $\alpha \in [0,1]$
and $g\in \mathcal{B}_{2,\infty}$ define
\begin{equation}\label{cgg}
c_\alpha[g]:=
\frac{1}{\Gamma(2-\alpha)}\int_0^\infty
\Delta_{1+\alpha}^g(z)\,dz,
\end{equation}
 where $\Gamma$ is the Gamma-function:
\[
\Gamma(z):=\int_0^\infty t^{z-1}e^{-t}\,dt,\qquad z>0.
\]
Note that for each $g\in \mathcal{B}_{2,\infty}$ one has
$c_\alpha[g]<\infty$, so that the mapping $c_\alpha: \mathcal{B}_{2,\infty}\mapsto [0,\infty)$ is well-defined.
We are interested in sharp estimates of $c_\alpha[g]$ in terms of $g,$ and several representations
for $c_\alpha$ given below will serve just that purpose.

The following proposition expresses $c_\alpha$ in terms of the function $G$ corresponding to $g$ by Lemma \ref{T1Dop}.
%It will be instrumental in deriving approximation formulas with sharp constants for holomorphic semigroups.

\begin{prop}\label{DeltaG}
If $g\in \mathcal{B}_{2,\infty},$ then for every $\alpha \in [0,1],$
\begin{equation}\label{gamma}
c_\alpha[g]=\int_0^\infty \frac{G(s)}{s^{2-\alpha}}\,ds,
\end{equation}
where $G$ is given by Lemma \ref{T1Dop}.
\end{prop}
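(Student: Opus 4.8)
The plan is to reduce everything to the representation of $\Delta_2^g$ furnished by Lemma \ref{T1Dop} and then to interchange the order of integration, the whole computation being essentially an application of Tonelli's theorem together with the Gamma integral.

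First I would record that, directly from the definition of $\Delta_\beta^g$, for every $z>0$
\[
\Delta_{1+\alpha}^g(z)=\frac{g(z)-e^{-z}}{z^{1+\alpha}}=z^{1-\alpha}\,\frac{g(z)-e^{-z}}{z^{2}}=z^{1-\alpha}\,\Delta_2^g(z),
\]
so that, inserting the representation \eqref{GtSDop},
\[
\Delta_{1+\alpha}^g(z)=z^{1-\alpha}\int_0^\infty e^{-zs}G(s)\,ds,\qquad z>0.
\]
Substituting this into the definition \eqref{cgg} of $c_\alpha[g]$ produces the double integral
\[
c_\alpha[g]=\frac{1}{\Gamma(2-\alpha)}\int_0^\infty\int_0^\infty z^{1-\alpha}e^{-zs}G(s)\,ds\,dz.
\]

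The one step that requires any attention is the exchange of the two integrations. Here the framework pays off: by Lemma \ref{T1Dop} the function $G$ is non-negative (and continuous) on $(0,\infty)$, so the integrand $z^{1-\alpha}e^{-zs}G(s)$ is non-negative and jointly measurable, and Tonelli's theorem legitimizes the interchange with \emph{no} a priori integrability check. The resulting identity therefore holds in $[0,\infty]$, and its finiteness for $g\in\mathcal{B}_{2,\infty}$ is precisely the remark recorded just before the statement of the proposition. After swapping,
\[
c_\alpha[g]=\frac{1}{\Gamma(2-\alpha)}\int_0^\infty G(s)\left(\int_0^\infty z^{1-\alpha}e^{-zs}\,dz\right)ds.
\]

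Finally I would evaluate the inner integral by the substitution $u=zs$, which turns it into a Gamma integral,
\[
\int_0^\infty z^{1-\alpha}e^{-zs}\,dz=\frac{1}{s^{2-\alpha}}\int_0^\infty u^{1-\alpha}e^{-u}\,du=\frac{\Gamma(2-\alpha)}{s^{2-\alpha}},
\]
which is valid since $2-\alpha>0$ for all $\alpha\in[0,1]$. Plugging this back in and cancelling the factor $\Gamma(2-\alpha)$ gives exactly \eqref{gamma}. Thus the only genuine subtlety is the Fubini--Tonelli interchange, and the positivity of $G$ renders it routine; the remaining manipulations are a direct computation.
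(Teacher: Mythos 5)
Your proof is correct and follows essentially the same route as the paper: both write $\Delta_{1+\alpha}^g(z)=z^{1-\alpha}\Delta_2^g(z)$, insert the representation \eqref{GtSDop} from Lemma \ref{T1Dop}, interchange the integrals using the positivity of $G$, and evaluate the inner integral as $\Gamma(2-\alpha)s^{\alpha-2}$. Your explicit remark that Tonelli needs no a priori integrability check, with finiteness supplied by $g\in\mathcal{B}_{2,\infty}$, merely makes precise what the paper leaves implicit.
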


\begin{proof}
Observe that $g$ admits the representation \eqref{GtSDop}.
Since
$\Delta_{1+\alpha}^g(z)=z^{1-\alpha}\Delta_2^g(z),$
taking into account the
positivity of $G(s)$ and using  Fubini's theorem, we  infer from  (\ref{GtSDop}) that
\begin{align*}
\int_0^\infty \Delta_{1+\alpha}^g(z)\,dz
%=& \int_0^\infty z^{1-\alpha}\int_0^\infty e^{-zs}G(s)
%\,ds\,dz\\
=& \int_0^\infty G(s)  \int_0^\infty z^{1-\alpha}\, e^{-zs}\,dz\,ds\\
=& \Gamma(2-\alpha) \int_0^\infty G(s) s^{\alpha-2} \,ds.
\end{align*}
\end{proof}

Similarly, in view of the
boundedness of $G$ on $(0,\infty),$ we obtain the following assertion.

\begin{prop}\label{Sgvz}
Let $g\in \mathcal{B}_1$. Then for all $\delta>0$
and $\alpha\in (0,1),$
\begin{equation}\label{gv11}
\frac{1}{\Gamma(1-\alpha)}
\int_0^\infty \frac{\Delta_2^g(z+\delta)}{z^\alpha}dz=
\int_0^\infty e^{-\delta s}\frac{G(s)}{s^{1-\alpha}}\,ds.
\end{equation}
\end{prop}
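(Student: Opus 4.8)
The plan is to mirror the computation in the proof of Proposition \ref{DeltaG}, replacing the pure power $z^{\alpha-2}$ by the kernel coming from the Laplace transform of $z^{-\alpha}$ against the shifted exponent $e^{-\delta s}$. The starting point is the representation $\Delta_2^g(z) = \int_0^\infty e^{-zs} G(s)\,ds$ from Lemma \ref{T1Dop}, which holds since $g \in \mathcal{B}_1$; substituting $z \mapsto z+\delta$ gives $\Delta_2^g(z+\delta) = \int_0^\infty e^{-(z+\delta)s} G(s)\,ds$ for $z,\delta > 0$. First I would insert this into the left-hand side of \eqref{gv11} to obtain a double integral, then invoke Fubini's theorem to interchange the order of integration in $z$ and $s$.

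The inner integral is then $\int_0^\infty z^{-\alpha} e^{-zs}\,dz$, which, by the definition of the Gamma-function together with the scaling $z \mapsto z/s$, equals $\Gamma(1-\alpha)\, s^{\alpha - 1}$ for $\alpha \in (0,1)$ and $s > 0$. Pulling out the factor $e^{-\delta s} G(s)$ (which is independent of $z$) leaves
\[
\int_0^\infty \frac{\Delta_2^g(z+\delta)}{z^\alpha}\,dz
= \Gamma(1-\alpha) \int_0^\infty e^{-\delta s}\, \frac{G(s)}{s^{1-\alpha}}\,ds,
\]
and dividing through by $\Gamma(1-\alpha)$ yields \eqref{gv11} exactly.

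The only genuine point requiring care is the justification of Fubini's theorem, and this is where the hypotheses and the earlier structural facts about $G$ enter. Since $G(s) \ge 0$ for all $s > 0$ and the integrand $z^{-\alpha} e^{-(z+\delta)s} G(s)$ is nonnegative on $(0,\infty) \times (0,\infty)$, I can apply Tonelli's theorem, which legitimizes the interchange unconditionally provided the resulting iterated integral is finite. Finiteness follows from the stated boundedness of $G$ on $(0,\infty)$ recorded after Lemma \ref{T1Dop} (see \eqref{Gless2}, which gives $G(s) \le 1$) together with the decay \eqref{Gless3}: the factor $e^{-\delta s}$ with $\delta > 0$ controls the integral for large $s$, while near $s = 0$ the estimate $G(s)/s^{1-\alpha} \le C s^{\alpha}$ (using $G(0)=0$ and the monotonicity of $G$ on $[0,1]$, or simply $G(s) \le s$) keeps the integrand locally integrable for $\alpha \in (0,1)$. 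Thus the main obstacle is merely bookkeeping on the integrability of $G(s)/s^{1-\alpha}$ weighted by $e^{-\delta s}$, which the positivity of $G$ and the Gamma-integral reduce to a routine verification rather than a substantive difficulty.
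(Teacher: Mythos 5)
Your proposal is correct and follows essentially the same route as the paper, which proves this proposition ``similarly'' to Proposition \ref{DeltaG}: insert the representation \eqref{GtSDop} (shifted by $\delta$) into the left-hand side, interchange the integrals using positivity of $G$, and evaluate the inner integral $\int_0^\infty z^{-\alpha}e^{-zs}\,dz=\Gamma(1-\alpha)s^{\alpha-1}$, with the boundedness of $G$ on $(0,\infty)$ (noted after Lemma \ref{T1Dop}) guaranteeing finiteness. Your additional bookkeeping near $s=0$ via $G(s)\le s$ is fine, though the cruder bound $G(s)\le 1$ already makes $s^{\alpha-1}$ locally integrable for $\alpha\in(0,1)$, which is exactly the paper's point.
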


Now we relate the functionals $c_0$ and $c_1$ to the mapping $g \to s_g$ introduced in Lemma \ref{LB1}.
\begin{lemma}
Let $g\in \mathcal{B}_{2,\infty}.$
Then
\begin{equation}\label{RS}
c_0[g]+c_1[g]=\int_0^\infty s_g(z)
\,dz,
\end{equation}
and for every $\alpha \in (0,1),$
\begin{equation}\label{rem1}
c_\alpha[g]\le (1-\alpha) c_0[g]+\alpha c_1[g].
\end{equation}
\end{lemma}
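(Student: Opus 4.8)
The plan is to handle the two assertions by different means: \eqref{RS} by a direct integration by parts that bypasses the density $G$, and \eqref{rem1} by invoking the representation of Proposition~\ref{DeltaG} together with an elementary pointwise inequality. For the first identity I would set $h(z):=g(z)-e^{-z}=z^2\Delta_2^g(z)$, so that $\Delta_1^g(z)=h(z)/z$ and $\Delta_2^g(z)=h(z)/z^2$. Since $\Gamma(2)=\Gamma(1)=1$, the definition \eqref{cgg} gives
\[
c_0[g]+c_1[g]=\int_0^\infty \Delta_1^g(z)\,dz+\int_0^\infty \Delta_2^g(z)\,dz=\int_0^\infty \frac{h(z)}{z}\,dz+\int_0^\infty \frac{h(z)}{z^2}\,dz.
\]
On the other hand, because $e^{-z}+(e^{-z})'\equiv 0$ one has $s_g(z)=z^{-1}(g(z)+g'(z))=z^{-1}(h(z)+h'(z))$, whence $\int_0^\infty s_g(z)\,dz=\int_0^\infty z^{-1}h(z)\,dz+\int_0^\infty z^{-1}h'(z)\,dz$. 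Thus \eqref{RS} reduces to the single identity $\int_0^\infty z^{-1}h'(z)\,dz=\int_0^\infty z^{-2}h(z)\,dz$, which I would obtain by integrating by parts on $[\varepsilon,R]$ and letting $\varepsilon\to 0^+$ and $R\to\infty$.

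The point to verify is that the boundary term $[h(z)/z]_\varepsilon^R$ disappears in the limit. Since $g\in\mathcal{B}_2$ we have $h(0)=g(0)-1=0$ and $h'(0)=g'(0)+1=0$, so $h(z)/z\to h'(0)=0$ as $z\to 0^+$; and since $g$ is bounded, $h$ is bounded and $h(z)/z\to 0$ as $z\to\infty$. Convergence of the individual integrals follows from $g\in\mathcal{B}_{2,\infty}$: near zero $h(z)=O(z^2)$ and $h'(z)=O(z)$ because $g''(0)<\infty$, while at infinity $\int_1^\infty z^{-1}|g(z)|\,dz<\infty$ is exactly the defining property of $\mathcal{B}_{2,\infty}$, $\int_1^\infty z^{-1}|g'(z)|\,dz\le\int_1^\infty|g'(z)|\,dz<\infty$ as $g$ is monotone and bounded, and the $e^{-z}$ contributions are harmless. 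This establishes \eqref{RS}.

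For the inequality \eqref{rem1} I would appeal to Proposition~\ref{DeltaG}, which writes all three functionals through the single positive density $G$:
\[
c_\alpha[g]=\int_0^\infty \frac{G(s)}{s^{2-\alpha}}\,ds,\qquad c_0[g]=\int_0^\infty \frac{G(s)}{s^2}\,ds,\qquad c_1[g]=\int_0^\infty \frac{G(s)}{s}\,ds.
\]
As $G(s)\ge 0$ for all $s>0$, it is enough to establish the pointwise bound $s^{-2+\alpha}\le (1-\alpha)s^{-2}+\alpha s^{-1}$ for $s>0$ and then integrate against $G(s)\,ds$. Multiplying through by $s^2$, this is precisely $s^\alpha\le(1-\alpha)+\alpha s$, which is the statement that the tangent line to the concave map $s\mapsto s^\alpha$ at $s=1$ lies above its graph (equivalently, the weighted arithmetic--geometric mean inequality), with equality at $s=1$. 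Integrating this estimate against the positive measure $G(s)\,ds$ yields \eqref{rem1} immediately.

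The only genuinely delicate point is the justification of the integration by parts in the first part: checking that the endpoint contributions vanish and that each separate integral converges, where the stronger membership $g\in\mathcal{B}_{2,\infty}$ (rather than merely $g\in\mathcal{B}_2$) is used precisely to control the behaviour at infinity. The second assertion is essentially routine once the $G$-representation of Proposition~\ref{DeltaG} is available, its entire content being the elementary concavity inequality for $s\mapsto s^\alpha$.
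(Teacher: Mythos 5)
Your argument is correct and follows essentially the same route as the paper: the identity $c_0[g]+c_1[g]=\int_0^\infty s_g(z)\,dz$ is obtained by the same integration by parts (you additionally spell out the vanishing of the boundary term $h(z)/z$, using $h(0)=h'(0)=0$ and boundedness, and the convergence of each separate integral via $g\in\mathcal{B}_{2,\infty}$, which the paper leaves implicit), and the bound $c_\alpha[g]\le(1-\alpha)c_0[g]+\alpha c_1[g]$ rests, as in the paper, on the representation $c_\alpha[g]=\int_0^\infty G(s)\,s^{\alpha-2}\,ds$ and the positivity of $G$. The only cosmetic difference is that the paper first applies H\"older's inequality to get $c_\alpha[g]\le(c_0[g])^{1-\alpha}(c_1[g])^{\alpha}$ and then a convexity (weighted arithmetic--geometric mean) argument, whereas you integrate the equivalent pointwise tangent-line bound $s^{\alpha}\le(1-\alpha)+\alpha s$ directly, applying the same convexity fact one step earlier.
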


\begin{proof}
Integrating by parts and using \eqref{gamma}, we obtain
\[
c_0[g]
=\int_0^\infty \frac{g'(z)+e^{-z}}{z}\,dz
=\int_0^\infty \frac{g'(z)+g(z)}{z}\,dz-c_1[g],
\]
i.e. \eqref{RS} holds.

 From \eqref{gamma}, positivity of $G$
and H\"older's inequality it follows that
\begin{equation*}\label{rem1Vyp}
c_\alpha[g]\le (c_0[g])^{1-\alpha}\cdot (c_1[g])^\alpha.
\end{equation*}
Then a  standard convexity argument
implies \eqref{rem1}.
\end{proof}

We finish this section with the study of  integrability properties for  completely
monotone functions. They will be required in the study of  approximations
for  sectorially bounded holomorphic $C_0$-semigroups.
In particular, the next estimate for powers of completely monotone functions will be useful.
In the following we abbreviate $L^k:=L^k(0,\infty), k \in \N.$
\begin{prop}\label{Pr2}
Let $g\in\mathcal{CM}$  be such that $g \in L^k$ for some $k \in \N.$
Then
\begin{equation}\label{intdg0d}
g^k(z)\le \|g\|_{L^k}\,z^{-1},\qquad z>0,
\end{equation}
and
\begin{equation}\label{intdg0}
g^{k+1}(z)\le k\|g\|_{L^k}\,|g'(z)|,\qquad z>0.
\end{equation}
\end{prop}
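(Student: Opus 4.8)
The plan is to prove both inequalities by exploiting the monotonicity of the completely monotone function $g$ together with the Laplace-transform representation. The key elementary fact is that if $g \in \mathcal{CM}$, then $g$ is non-increasing on $(0,\infty)$ (since $g' \le 0$), so for fixed $z>0$ the constant value $g(z)$ dominates $g(s)$ for all $s \ge z$. I would first establish \eqref{intdg0d}. The natural idea is to integrate: for $z>0$,
\begin{equation*}
\|g\|_{L^k}^k = \int_0^\infty g^k(s)\,ds \ge \int_0^z g^k(s)\,ds \ge z\, g^k(z),
\end{equation*}
where the last step uses that $g^k$ is itself non-increasing (a power of a non-increasing nonnegative function), so $g^k(s) \ge g^k(z)$ for $s \in (0,z)$. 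This yields $g^k(z) \le \|g\|_{L^k}^k / z$. I should double-check whether the intended inequality carries the $k$-th root on the norm or not; as stated in \eqref{intdg0d} the right-hand side is $\|g\|_{L^k} z^{-1}$, so presumably $\|g\|_{L^k}$ is being used to denote the quantity $\int_0^\infty g^k\,ds$ rather than its $k$-th root, or the bound is understood with that convention. I would follow the paper's convention consistently.

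For the second inequality \eqref{intdg0}, the plan is to relate $g^{k+1}$ to the derivative $g'$ via an integration argument on the tail. Since $g$ is non-increasing and $g^k \in L^1$, for fixed $z>0$ I would write
\begin{equation*}
g^{k+1}(z) = \int_z^\infty \left(-\frac{d}{ds}\,g^{k+1}(s)\right)ds = (k+1)\int_z^\infty g^k(s)\,|g'(s)|\,ds,
\end{equation*}
using $g^{k+1}(\infty)=0$ (which follows from $g^k \in L^1$ forcing $g(s)\to 0$). This is close but produces a factor $k+1$ rather than $k$ and an integral rather than a pointwise product. A cleaner route that targets the stated constant $k$ is to start instead from the first inequality applied cleverly, or to bound $g^k(s)$ by $g^k(z)$ inside the tail integral. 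The more promising approach is: since $g'$ is itself (up to sign) monotone and $|g'|$ is non-increasing (as $-g'$ is completely monotone when $g$ is, so $-g'$ is non-increasing), I would bound $g^{k+1}(z)$ by using that on the interval $(0,z)$ one has $g^k(s)\,|g'(s)| \ge$ something comparable.

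The cleanest path, which I expect to give exactly the constant $k$, is to integrate the product $g^k |g'|$ over $(z,\infty)$ and also use that $|g'(s)| \le |g'(z)|$ is \emph{false} in general (monotonicity goes the wrong way for $|g'|$ on the relevant interval), so I would instead write for $s \in (0,z)$ that $g(s) \ge g(z)$ and handle the tail directly:
\begin{equation*}
\int_0^\infty g^k(s)\,|g'(s)|\,ds = \int_0^\infty \left(-\tfrac{1}{k+1}\tfrac{d}{ds}g^{k+1}(s)\right)ds = \frac{g^{k+1}(0^+)}{k+1}.
\end{equation*}
The main obstacle, and the delicate point I would scrutinize most carefully, is getting the constant $k$ exactly right rather than $k+1$ or $k-1$; this almost certainly hinges on combining \eqref{intdg0d} with an integration by parts or on a telescoping manipulation $g^{k+1}(z) = g^k(z)\cdot g(z)$ together with a bound on $g(z)$ in terms of $\int_z^\infty |g'|$. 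Specifically, I anticipate using $g(z) = \int_z^\infty |g'(s)|\,ds$ (since $g(\infty)=0$) and then estimating $g^k(z)\int_z^\infty |g'(s)|\,ds$ against $k\int_z^\infty g^{k-1}(s)|g'(s)|\cdot(\text{something})$, integrating by parts to land on $|g'(z)|$. Pinning down this constant is the whole content of the lemma; the integrability and monotonicity inputs are routine.
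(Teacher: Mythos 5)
Your proof of \eqref{intdg0d} is correct and in fact more elementary than the paper's: you use only that $g^k$ is non-increasing, so $\|g^k\|_{L^1}\ge\int_0^z g^k(s)\,ds\ge z\,g^k(z)$, whereas the paper obtains the same bound from the Laplace representation via $e^{-zs}\le(zs)^{-1}$ and the Fubini identity $\int_0^\infty s^{-1}\,\nu(ds)=\int_0^\infty g(z)\,dz$. Your reading of the notation is also right: in the paper's own proof, and in Lemma \ref{nnk}, the quantity used is $\|g^k\|_{L^1}=\int_0^\infty g^k(z)\,dz$, so that is how $\|g\|_{L^k}$ in the statement is to be understood.

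The genuine gap is \eqref{intdg0}: you never prove it, and you concede that pinning down the constant $k$ is the whole content — which it is, and your circle of ideas cannot reach it. Note first that your parenthetical claim that $|g'(s)|\le|g'(z)|$ ``is false in general'' is mistaken on the interval that matters: since $-g'$ is completely monotone, $|g'|$ is non-increasing, so $|g'(s)|\le|g'(z)|$ for all $s\ge z$; inserted into your own identity $g^{k+1}(z)=(k+1)\int_z^\infty g^k(s)\,|g'(s)|\,ds$ this does close the argument — but with constant $k+1$, not $k$. The idea you are missing is the paper's Cauchy--Schwarz step on the representing measure. For $k=1$, split $e^{-zs}=\bigl(s^{-1/2}e^{-zs/2}\bigr)\bigl(s^{1/2}e^{-zs/2}\bigr)$ in $g(z)=\int_0^\infty e^{-zs}\,\nu(ds)$ to get
\begin{equation*}
g^2(z)\le\left(\int_0^\infty s^{-1}e^{-zs}\,\nu(ds)\right)\left(\int_0^\infty s\,e^{-zs}\,\nu(ds)\right)\le\|g\|_{L^1}\,|g'(z)|,
\end{equation*}
using $\int_0^\infty s^{-1}\,\nu(ds)=\|g\|_{L^1}$ (Fubini) and $\int_0^\infty s\,e^{-zs}\,\nu(ds)=|g'(z)|$. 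For general $k$, apply this case to the completely monotone function $g^k\in L^1$: it gives $g^{2k}(z)\le\|g^k\|_{L^1}\,|(g^k)'(z)|=k\,\|g^k\|_{L^1}\,g^{k-1}(z)\,|g'(z)|$, and dividing by $g^{k-1}(z)$ (the inequality being trivial where $g$ vanishes) yields exactly \eqref{intdg0}. None of your proposed manipulations — the tail integration, $g(z)=\int_z^\infty|g'(s)|\,ds$, or an integration by parts — supplies this step, and pure monotonicity arguments appear to top out at the constant $k+1$.
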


\begin{proof}
Let $g$ be given by  (\ref{CMLaplace}) and let $k=1.$
Then by Fubini's theorem,
\begin{equation}\label{L100}
\int_0^\infty s^{-1}\,\nu(ds)=
\int_0^\infty \int_0^\infty e^{-zs}\,\nu(ds)\,dz
=\int_0^\infty g(z)\,dz=\|g\|_{L^1}.
\end{equation}
Since $e^{-zs}\le 1/zs$ for positive $z$ and $s$,
\[
g(z)\le z^{-1}\int_0^\infty s^{-1}\,\nu(ds)
=\|g\|_{L^1}\,z^{-1},\qquad z>0,
\]
so \eqref{intdg0d} holds for $k=1$.
Next, using \eqref{L100}, we have
\[
%\left( \int_0^\infty e^{-zs}\,\nu(ds)\right)^2
[g(z)]^2
\le \left(\int_0^\infty s^{-1}e^{-zs}\,\nu(ds)\right)\cdot
\left(\int_0^\infty s e^{-zs}\,\nu(ds)\right)
\le \|g\|_{L^1}\,|g'(z)|,
\]
for every $z>0,$
and
(\ref{intdg0}) is true  for $k=1$ as well.

Assume now that $g \in L^k$  for some $k\in \mathbb N.$
As $g^k\in \mathcal{CM}$ and
\eqref{intdg0d} is valid for $k=1,$ it also holds for all $k \in \N.$
Moreover, using
\eqref{intdg0} with  $k=1$, we obtain
\[
g^{2k}(z)\le\|g^k\|_{L^1}\, |[g^k(z)]'|
= k\|g^k\|_{L^1}\, |g^{k-1}(z)|g'(z)|,\qquad z>0,
\]
and (\ref{intdg0}) holds also for each $k \in \N.$
\end{proof}

Note that if $g \in L^p(0,\infty)$ for some $p \ge 1$ then since $g\le 1$ we have $g \in L^q(0,\infty)$, for all $q \ge p.$
Thus the choice of integers in Proposition \ref{Pr2} does not restrict generality, and it is essentially a matter of convenience.

We will employ the preceding lemma to estimate  moments of sufficiently large powers of functions from $\mathcal {BM}.$
Recall that for $\alpha >0$ and $\beta>0,$
\begin{equation}\label{Class}
\int_0^1 t^{\alpha-1}(1-t)^{\beta-1}dt=
\frac{\Gamma(\alpha)\Gamma(\beta)}{\Gamma(\alpha+\beta)},
\end{equation}
and
\begin{equation}\label{gamma1}
\frac{\Gamma(z)}{\Gamma(z+\beta)}=
z^{-\beta}
\left[1+\frac{\beta(1-\beta)}{2z}+\frac{r_\beta(z)}{z^2}\right],
\qquad z\ge 1,
\end{equation}
where the remainder $r_\beta$ satisfies
\[
r_\beta(z)\le C_{\beta_0},\qquad z\ge 1,\quad \beta\in [0,\beta_0],
\]
see e.g. \cite[Chapter 1.1]{AnAsRo99}.
\begin{lemma}\label{nnk}
Suppose that  $g\in\mathcal{BM} \cap L^k$  for some
$k\in \N$, and  $g(0)=1$. Then there exists
$C(g)>0$ such that for all $\beta\in [0,1]$ and  $n\ge 2k+1, n \in \N,$
\begin{equation}\label{vbeta1}
\int_0^\infty g^n(z) z^\beta\,dz \le C(g) n^{-1-\beta}.
\end{equation}
\end{lemma}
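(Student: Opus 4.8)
The plan is to exploit the two pointwise bounds established in Proposition~\ref{Pr2}, namely $g^k(z) \le \|g\|_{L^k} z^{-1}$ and $g^{k+1}(z) \le k\|g\|_{L^k}|g'(z)|$, and feed them into the moment integral $\int_0^\infty g^n(z)z^\beta\,dz$. The key idea is that once $n$ is large enough relative to $k$, I can peel off enough copies of $g$ to both (a) gain a factor of $z^{-1}$ that kills the $z^\beta$ growth and (b) convert the remaining high power of $g$ into a derivative $|g'|$, so that an integration by parts (or a direct change of variables via the Laplace representation) produces the clean $n^{-1-\beta}$ decay. The role of $g(0)=1$ is to control boundary terms and to guarantee that the relevant substitution $u = g(z)$ behaves well near $z=0$.

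**First I would** split off $2k$ factors of $g$: write $g^n = g^{2k}\cdot g^{n-2k}$, where $n-2k \ge 1$ since $n \ge 2k+1$. Using \eqref{intdg0} applied to $g^k$ (that is, the second estimate of Proposition~\ref{Pr2} with $g$ replaced by $g^k$, which lies in $\mathcal{CM}\cap L^1$ after we note $g^k \in L^1$), I get $g^{2k}(z) \le k\|g^k\|_{L^1}|[g^k]'(z)| / $\dots{}, so that a single factor of $|g'|$ appears. The cleanest route is probably to observe that $\frac{d}{dz}g^{m}(z) = m g^{m-1}(z)g'(z)$, so that the high power of $g$ is an exact derivative. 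Concretely, I expect to bound
\[
\int_0^\infty g^n(z)\,z^\beta\,dz
\le C \int_0^\infty |[g^{m}]'(z)|\,g^{n-m}(z)\,z^{\beta-1}\,dz
\]
after using \eqref{intdg0d} to extract one $z^{-1}$, and then integrate by parts to transfer the derivative, reducing the exponent of $n$ by one at each stage.

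**The cleaner mechanism**, which I suspect the authors intend, is a change of variables: since $g$ is strictly decreasing (being completely monotone and nonconstant, with $g(0)=1$), substitute $u=g(z)$, $du = g'(z)\,dz$, turning $\int_0^\infty g^n(z)z^\beta\,dz$ into an integral of the form $\int_0^1 u^n\,z(u)^\beta\,|g'(z(u))|^{-1}\,du$. Combining $g^{k+1}(z)\le k\|g\|_{L^k}|g'(z)|$ to bound $|g'(z)|^{-1}$ from above by $k\|g\|_{L^k} g^{-(k+1)}(z) = k\|g\|_{L^k}u^{-(k+1)}$, together with \eqref{intdg0d} giving $z(u) = z \le \|g\|_{L^k}u^{-k}$ hence $z^\beta \le (\|g\|_{L^k})^\beta u^{-k\beta}$, reduces everything to a one-dimensional Beta-type integral $\int_0^1 u^{n-(k+1)-k\beta}\,du$, which is finite precisely when $n > k+1+k\beta$, guaranteed by $n \ge 2k+1$ and $\beta \le 1$, and evaluates to $(n-k-k\beta-k)^{-1} \sim c\,n^{-1}$. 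Matching the correct power $n^{-1-\beta}$ will require using the refined Stirling-type expansion \eqref{gamma1} and the Beta-integral identity \eqref{Class} rather than a crude bound, so I would set up the estimate as a genuine Beta integral $\int_0^1 u^{a}(1-u)^{b}\,du$ via a more careful substitution that preserves the $(1-u)^\beta$-type behavior near $u=1$ (equivalently, near $z=0$), and then invoke \eqref{gamma1} to read off the $n^{-1-\beta}$ asymptotics uniformly in $\beta\in[0,1]$.

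**The main obstacle** I anticipate is obtaining the sharp exponent $-1-\beta$ rather than a lossy $-1$, and doing so \emph{uniformly in $\beta\in[0,1]$}. A naive bound of $z^\beta$ by a single negative power of $u$ loses the $\beta$-dependence entirely; the extra factor $n^{-\beta}$ must come from the behavior of $z(u)^\beta$ near the endpoint $u=1$ (i.e.\ $z\to 0^+$), where $z \approx 1-u$ because $g(z)\approx 1+g'(0)z = 1-z$. Capturing this requires recognizing that the integral concentrates near $u=1$ and that $z(u)^\beta \approx (1-u)^\beta$ there, converting the integral into a Beta function $B(n-\text{const},\,1+\beta)$ whose asymptotics via \eqref{gamma1} yield exactly $n^{-1-\beta}$ with a constant $C(g)$ depending only on $g$ (through $\|g\|_{L^k}$ and the local behavior of $g$ at $0$). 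Controlling the error terms $r_\beta(z)$ uniformly over $\beta\in[0,\beta_0]$ with $\beta_0=1$ is the delicate part, and is exactly why \eqref{gamma1} with its uniform remainder bound was recorded just before the statement.
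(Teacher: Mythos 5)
There is a genuine gap, and it sits exactly where you flagged it: the uniform $n^{-1-\beta}$ exponent. The only computation you actually carry out (the substitution $u=g(z)$ with the crude bounds $|g'(z)|^{-1}\le k\|g\|_{L^k}u^{-(k+1)}$ and $z^\beta\le \|g\|_{L^k}^\beta u^{-k\beta}$) yields an integral of size $c\,n^{-1}$, which you concede is lossy; the proposed repair --- a Beta integral $B(n-\mathrm{const},1+\beta)$ read off near $u=1$ via \eqref{gamma1} --- is left as a sketch and would need several unproved inputs to run: an upper bound $z\le C(1-u)$ near $u=1$ (your justification $g(z)\approx 1+g'(0)z=1-z$ invokes $g'(0)=-1$, which the lemma does not assume and which may even be $-\infty$ for $g\in\mathcal{BM}$; one would instead use that $m(z)=(1-g(z))/z$ is decreasing with $m(1)=1-g(1)>0$), a lower bound $|g'(z)|\ge |g'(1)|>0$ on $(0,1]$, and a separate geometric estimate of the tail $u\le g(1)$. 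So as written the proposal does not establish \eqref{vbeta1}.

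The missing idea is much simpler and makes all the Gamma-function machinery unnecessary: prove only the two endpoint cases and interpolate by H\"older. For $\beta=0$ and $n\ge k+1$, write $g^n=g^{n-k-1}g^{k+1}$, apply \eqref{intdg0}, and integrate by parts using $g(0)=1$ and $g(\infty)=0$:
\begin{equation*}
\int_0^\infty g^{n-k-1}(z)g^{k+1}(z)\,dz\le k\|g^k\|_{L^1}\int_0^\infty g^{n-k-1}(z)|g'(z)|\,dz=\frac{k\|g^k\|_{L^1}}{n-k}.
\end{equation*}
For $\beta=1$ and $n\ge 2k+1$, the same pointwise bound plus one integration by parts gives
\begin{equation*}
\int_0^\infty g^n(z)\,z\,dz\le \frac{k\|g^k\|_{L^1}}{n-k}\int_0^\infty g^{n-k}(z)\,dz\le \frac{k^2\|g^k\|_{L^1}^2}{(n-k)(n-2k)},
\end{equation*}
where the last step is the $\beta=0$ case applied with $n$ replaced by $n-k$. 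Then for any $\beta\in[0,1]$, H\"older's inequality
\begin{equation*}
\int_0^\infty g^n(z)z^\beta\,dz\le\Bigl(\int_0^\infty g^n(z)\,dz\Bigr)^{1-\beta}\Bigl(\int_0^\infty g^n(z)\,z\,dz\Bigr)^{\beta}
\end{equation*}
delivers $C(g)n^{-1-\beta}$ uniformly in $\beta$ at once. This is the paper's proof; in particular, your guess that \eqref{gamma1} and \eqref{Class} were recorded for this lemma is off --- they are used in the proof of Theorem \ref{Asymp1} in Appendix 2, not here. Your first-paragraph plan (peel off powers of $g$, use Proposition \ref{Pr2}, integrate by parts) contains the right ingredients for the endpoints, but without the interpolation step it does not reach fractional $\beta$, and without a completed endpoint analysis your change-of-variables route does not reach the sharp exponent.
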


\begin{proof}
First, fix $\beta=0$ and $n\ge k+1.$ Using
\eqref{intdg0} and integrating by parts, we obtain
\begin{equation}\label{exp1120}
\int_0^\infty
g^{n-k-1}(z)g^{k+1}(z)\,dz
\le k\|g^k\|_{L^1}
\int_0^\infty
g^{n-k-1}(z)|g'(z)|\,dz
=\frac{k\|g^k\|_{L^1}}{n-k}.
\end{equation}

Let now $\beta=1$
and $n\ge 2k+1$ be fixed. Then similarly to the above,
\begin{align}
\label{exp112}
\int_0^\infty g^n(z) z\,dz
\le& \frac{k\|g^k\|_{L^1}}{n-k}
\left|\int_0^\infty z
\,d g^{n-k}(z)\right|
=\frac{k\|g^k\|_{L^1}}{n-k}
\int_0^\infty
g^{n-k}(z)\,dz\\
\le& \frac{ k^2\|g^k\|^2_{L^1}}{n-k}
\int_0^\infty
g^{n-2k-1}(z)\,|g'(z)|\,dz
=\frac{ k^2\|g^k\|^2_{L^1}}{(n-k)(n-2k)}.\notag
\end{align}

Note that $n-k\ge \frac{1}{2n}$  and $n-2k\ge \frac{1}{3k}.$ Then, by
\eqref{exp1120}, \eqref{exp112} and H\"older's inequality,
\begin{align*}
\int_0^\infty g^n(z) z^\beta\,dz \le
\left(\int_0^\infty g^n(z)\,dz\right)^{1-\beta}
\left( \int_0^\infty g^n(z) z\,dz\right)^\beta \le C(g) n^{-1-\beta},
\end{align*}
where
\begin{align*}
C(g)=2(3k)^\beta(k\|g^k\|_{L^1})^{1+\beta}\le
6 k^3(1+\|g^k\|_{L^1})^{2}.
\end{align*}
This finishes the proof.
\end{proof}

Lemma \ref{nnk} yields the asymptotics of $(c_\alpha[g_n])_{n \ge 1}$
corresponding to $g$ with good enough
integrability properties.

\begin{thm}\label{Asymp1}
Let $g\in \mathcal{B}_2\cap L^k$
 for some
$k\in \N$.
Then there exists $C(g)>0$ such that for all $\alpha\in [0,1]$ and  $n\in \N,$
\begin{equation}\label{ContAs}
c_\alpha[g_n]=\frac{g''(0)-1}{2n}+r_\alpha(n), \quad \text{and}
\quad
|r_\alpha(n)|\le \frac{C(g)}{n^2}.
\end{equation}
\end{thm}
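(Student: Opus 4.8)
The plan is to recast $c_\alpha[g_n]$ as a single weighted integral, to peel off the claimed leading term as the value at the origin of the completely monotone function $\Delta_2^{g_n}$, and then to estimate the remainder by splitting it into a ``tail'' handled by Lemma \ref{nnk} and a ``bulk'' handled by the finiteness of the second moment of the representing measure. Since $g\in\mathcal{B}_2\cap L^k$ forces $g_n\in\mathcal{B}_{2,\infty}$ for $n\ge k$ (because $g^n\le g^k\in L^1$ as $g\le 1$), Proposition \ref{DeltaG} applies to each $g_n$ and gives, with $G_n$ the density furnished by Lemma \ref{T1Dop} for $g_n$,
\[
c_\alpha[g_n]=\frac{1}{\Gamma(2-\alpha)}\int_0^\infty z^{1-\alpha}\Delta_2^{g_n}(z)\,dz,\qquad \Delta_2^{g_n}(z)=\int_0^\infty e^{-zs}G_n(s)\,ds.
\]
By \eqref{cor12} and \eqref{h00} one has $\Delta_2^{g_n}(0)=\int_0^\infty G_n(s)\,ds=\tfrac{g_n''(0)-1}{2}=\tfrac{g''(0)-1}{2n}$, and since $\frac1{\Gamma(2-\alpha)}\int_0^\infty z^{1-\alpha}e^{-z}\,dz=1$, subtracting this identity gives the clean remainder formula
\[
r_\alpha(n)=\frac{1}{\Gamma(2-\alpha)}\int_0^\infty z^{1-\alpha}\Bigl(\Delta_2^{g_n}(z)-\Delta_2^{g_n}(0)\,e^{-z}\Bigr)\,dz=\frac{1}{\Gamma(2-\alpha)}\int_0^\infty z^{1-\alpha}\!\int_0^\infty (e^{-zs}-e^{-z})G_n(s)\,ds\,dz.
\]
As $1/\Gamma(2-\alpha)$ is bounded on $[0,1]$ and $z^{1-\alpha}\le 1+z$, everything reduces to bounding the inner double integral at rate $n^{-2}$, uniformly in $\alpha$; crucially the sign of $e^{-zs}-e^{-z}$ must be retained in the bulk, where cancellation is essential.

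For the tail $z\ge n$ I would invoke Lemma \ref{nnk}. There $\Delta_2^{g_n}(z)\le g_n(z)/z^2$ and $z^{-1-\alpha}\le n^{-2}z^{1-\alpha}$, so after the substitution $u=z/n$ the estimate \eqref{vbeta1} with $\beta=1-\alpha$ yields
\[
\int_n^\infty z^{1-\alpha}\Delta_2^{g_n}(z)\,dz\le n^{-2}\int_0^\infty z^{1-\alpha}g_n(z)\,dz=n^{-2}\,n^{2-\alpha}\int_0^\infty u^{1-\alpha}g^n(u)\,du\le C(g)\,n^{-2},
\]
while the $e^{-z}$ contribution over $[n,\infty)$ is exponentially small. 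This disposes of the tail uniformly in $\alpha$ for $n\ge 2k+1$; the finitely many smaller $n$ are absorbed into $C(g)$, since $c_\alpha[g_n]$ and $(g''(0)-1)/(2n)$ are individually bounded for each fixed $n$. This is precisely the role of Lemma \ref{nnk}.

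The heart of the matter, and the step I expect to be the main obstacle, is the bulk $0<z\le n$. The naive route---Taylor expanding $n\log g(z/n)=-z+\tfrac{g''(0)-1}{2}\tfrac{z^2}{n}+\cdots$---stalls, because under $g\in\mathcal{B}_2$ one knows only $g''(0)<\infty$ and has no pointwise third-order control, so the pointwise remainder carries no rate in $n$. The way around this is to pass to the representing measure $\nu_n$ of $g_n$ (the $n$-fold convolution power of the image of $\nu$ under $s\mapsto s/n$) and to exploit cancellation: carrying out the $s$- and $z$-integrations one finds $c_\alpha[g_n]=\int_0^\infty\kappa_\alpha(\tau)\,\nu_n(d\tau)$ with kernel $\kappa_\alpha(\tau)=\int_1^\tau(\tau-s)s^{\alpha-2}\,ds$ obeying $\kappa_\alpha(1)=\kappa_\alpha'(1)=0$ and $\kappa_\alpha''(\tau)=\tau^{\alpha-2}$, so that, since $\tfrac{g''(0)-1}{2n}=\tfrac12\int_0^\infty(\tau-1)^2\,\nu_n(d\tau)$ by \eqref{secD} applied to $g_n$ together with \eqref{h00},
\[
r_\alpha(n)=\int_0^\infty\Bigl(\kappa_\alpha(\tau)-\tfrac12(\tau-1)^2\Bigr)\,\nu_n(d\tau),
\]
where the integrand vanishes to third order at $\tau=1$ and is bounded there by $C|\tau-1|^3$ uniformly in $\alpha\in[0,1]$. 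Because $\nu_n$ has mean $1$, variance $\tfrac{g''(0)-1}{n}$, and is nearly symmetric about $1$, the contribution of a fixed neighbourhood of $\tau=1$ is a truncated third central moment, whose oddness against the convolution structure of $\nu_n$ is the source of the extra power of $n^{-1}$; the contribution away from $\tau=1$ is then controlled by the finite second moment together with the integrability encoded in Lemma \ref{nnk}. Making this cancellation quantitative---so that the bulk is genuinely $O(n^{-2})$ and not merely $O(n^{-1})$, uniformly in $\alpha$, using only $g''(0)<\infty$ and $g\in L^k$---is the delicate point on which the whole estimate turns; once it is in hand, combining it with the tail bound gives $|r_\alpha(n)|\le C(g)\,n^{-2}$.
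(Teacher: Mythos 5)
Your reductions are correct as far as they go: the identity $c_\alpha[g_n]=\frac{1}{\Gamma(2-\alpha)}\int_0^\infty z^{1-\alpha}\Delta_2^{g_n}(z)\,dz$, the extraction of the leading term from $\Delta_2^{g_n}(0)=\frac{g''(0)-1}{2n}$ via \eqref{cor12} and \eqref{h00}, the kernel formula $c_\alpha[g_n]=\int_0^\infty \kappa_\alpha(\tau)\,\nu_n(d\tau)$ with $\kappa_\alpha''(\tau)=\tau^{\alpha-2}$, the tail bound for $z\ge n$ via Lemma \ref{nnk}, and the absorption of finitely many small $n$ into $C(g)$ are all sound. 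But the proof stops exactly where the theorem lives: you yourself flag the bulk estimate as ``the delicate point on which the whole estimate turns'' and never carry it out, so what you have is a correct reformulation plus an unproved central claim, not a proof. Worse, the mechanism you propose for the bulk cannot be completed under the stated hypotheses. For $g\in\mathcal{B}_2$ one knows only $\int_0^\infty(\tau-1)^2\,\nu(d\tau)<\infty$; the third moment of $\nu$ may be infinite ($g'''(0)=-\infty$ is allowed), the truncated third central moment $\int_{|\tau-1|\le T}|\tau-1|^3\,\nu(d\tau)$ can diverge as $T\to\infty$ (e.g.\ logarithmically, for a density with tail $\tau^{-4}$), and $\nu_n$ is in no sense ``nearly symmetric about $1$''---it is the law of a normalized sum of nonnegative, generally skewed variables. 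So the advertised source of the extra factor $n^{-1}$, oddness of a truncated third central moment, is simply not available; making it quantitative would amount to an Edgeworth-type expansion with $O(n^{-2})$ error, which needs third-or-higher moments and regularity of $\nu$ that the hypotheses do not supply. A secondary gap: the bound $|h_\alpha(\tau)|\le C|\tau-1|^3$ holds only on a neighbourhood of $\tau=1$ bounded away from $0$, since $\kappa_0(\tau)\sim\log(1/\tau)$ as $\tau\to 0+$; the region near $\tau=0$ needs the $L^k$ hypothesis through Chernoff-type bounds on $\nu_n[0,\tau]$, which your sketch omits entirely.

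The idea you are missing is the paper's exact-evaluation device, which bypasses moment expansions of $\nu_n$ altogether. Writing $c_\alpha[g_n]=n^{1-\alpha}R_\alpha(n)/(\alpha\Gamma(2-\alpha))+1/(\alpha(1-\alpha))$ with $R_\alpha(n)=\int_0^\infty g^{n-1}(z)g'(z)z^{-\alpha}\,dz$, the paper replaces the weight $z^{-\alpha}$ by the surrogate $z^{-\alpha}f_\alpha(z)=(1-g(z))^{-\alpha}\bigl(1-\alpha a(1-g(z))\bigr)$, $a=g''(0)/2$; the point is that after the substitution $s=g(z)$ the measure $g^{n-1}g'\,dz$ becomes exactly $-s^{n-1}ds$, so the main part $R_{0,\alpha}(n)$ evaluates in closed form as Beta integrals, and the Gamma-ratio asymptotics \eqref{gamma1} deliver $\frac{g''(0)-1}{2n}+O(n^{-2})$ uniformly in $\alpha\in(0,1)$ with no probabilistic input whatsoever. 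The discrepancy $R_{1,\alpha}(n)$ then carries the factor $1-f_\alpha(z)$, which vanishes to second order at $z=0$ (this is where $g''(0)<\infty$ enters, through $m(z)=(1-g(z))/z$), and after one integration by parts it is controlled by $\frac{\alpha}{n}\int_0^\infty g^n(z)z^{1-\alpha}\,dz$, which is where Lemma \ref{nnk} is actually used---once, for the error term, not for a tail split. That single structural trick is what lets the paper reach the rate $n^{-2}$ using only second-derivative information at $0$ plus integrability of $g^k$; your probabilistic route, by contrast, demands data about $\nu$ that the theorem does not assume.
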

The proof of the above theorem is rather technical and is postponed to Appendix $2$.

\begin{remark}
Note that in an important case  $g(z)=(1+z)^{-1}$ corresponding to Euler's approximation,
 we have $ g \in \mathcal{B}_2 \cap L^2$ so that $g$ satisfies the assumptions of Theorem \ref{Asymp1}.
 Moreover, in this case  we are able to improve the bounds in \eqref{ContAs}. Indeed,
integrating by parts and using \cite[Formula 2.2.4.24]{Tabl}, we have for every $n \in \N:$
\begin{align*}
\alpha c_\alpha[g_n]=&
\alpha \int_0^\infty \left(\frac{1}{(1+z/n)^n}-e^{-z}\right)\frac{dz}{z^{1+\alpha}}\\
=&\Gamma(1-\alpha)-n^{1-\alpha}\int_0^\infty\frac{dz}{(1+z)^{n+1}z^\alpha}\\
=&\Gamma(1-\alpha)\left(1-\frac{\Gamma(n+\alpha)}{n^\alpha \Gamma(n)}\right).
\end{align*}
Hence  using $\Gamma(2-\alpha)=(1-\alpha)\Gamma(1-\alpha)$, it follows that
\begin{equation}\label{Lim1}
c_\alpha[g_n]
=\frac{1}{\alpha(1-\alpha)}
\left[1-\frac{\Gamma(n+\alpha)}{n^\alpha\Gamma(n)}\right].
\end{equation}
So, passing to the limit in (\ref{Lim1}), we infer that
\begin{align*}
c_0[g_n]=&\frac{1}{\Gamma(n)}\lim_{\alpha\to 0+}\,
\frac{n^\alpha \Gamma(n)-\Gamma(n+\alpha)}{\alpha}=\log n-\psi(n),\\
c_1[g_n]=&\frac{1}{\Gamma(n+1)}
\lim_{\alpha\to 1-}\frac{n^\alpha \Gamma(n)-\Gamma(n+\alpha)}{1-\alpha}\\
=&\psi(n+1)-\log n\\
=&\psi(n)-\log n+\frac{1}{n},
\end{align*}
where  $\psi(z):=\Gamma'(z)/\Gamma(z)$.

Using \cite[Ch. 14, $\S\,5$]{Ficht}, we conclude that
\[
\log n-\psi(n)=\frac{1}{2n}+\frac{\theta_n}{12n^2},
%-\frac{\theta_n}{120 n^4},
\quad \theta_n\in (0,1),
\]
and then
\begin{equation}\label{psi1}
c_0[g_n]=\frac{1}{2n}+\frac{\theta_n}{12n^2},\quad
c_1[g_n]=\frac{1}{2n}-\frac{\theta_n}{12n^2}
\end{equation}
for every $n\in \N.$ Moreover, by \eqref{rem1} and
\eqref{psi1},
\begin{equation}\label{calpha}
c_\alpha[g_n] \le
\frac{1}{2n}+(1-2\alpha)\frac{\theta_n}{12n^2},\quad \alpha\in [0,1].
\end{equation}
This is in agreement with
a  general estimate of $c_\alpha[g_n]$ in \eqref{ContAs}.
\end{remark}

\section{Approximation of bounded $C_0$-semigroups}\label{approxim}
\subsection{First order approximations}

Now we are able to formulate our first results on approximation of bounded $C_0$-semigroups $(e^{-tA})_{t \ge 0}$ on a Banach space $X$
via completely monotone functions of $A$. We start with an operator norm estimate for $\Delta_0^g(A)$ with $g \in\mathcal B_1.$
The estimate is obtained by a direct application of function-theoretical bounds proved in the previous section and
  elementary properties of the HP-calculus. The approximation formulas with rates will follow then by a simple scaling procedure.

\begin{thm}\label{Psimple1}
Let $-A$ be the generator of a bounded $C_0$-semigroup $(e^{-tA})_{t\ge 0}$ on $X$, and let
 $g \in \mathcal{B}_1$, $g\sim \nu$.
Let  $M:=\sup\limits_{t\geq 0} \|e^{-tA}\|$.
Then
\begin{equation}\label{RowA01x}
\| \Delta_0^g(A)x\| \leq 2M
L[g]  \|A x\|,\qquad
x\in \dom(A),
\end{equation}
and for every $\alpha\in (0,1),$
\begin{equation}\label{RowAx}
\| \Delta_0^g(A)x\| \leq 8M (L[g])^\alpha
\|A^\alpha x\|,\qquad
x\in \dom(A^\alpha).
\end{equation}
\end{thm}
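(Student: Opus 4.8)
The plan is to reduce the operator estimate to the function-theoretic bounds of the previous section, using the factorization $\Delta_0^g(z)=z^\alpha\,\Delta_\alpha^g(z)$ together with the product rule of the extended HP-calculus. The whole argument is essentially bookkeeping once Corollary \ref{CintC} and Proposition \ref{CMcor} are available; there is no real analytic difficulty left.

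First I would note that $\Delta_0^g=g-e^{-z}\in\Wip(\C_+)$, since both $g$ and $z\mapsto e^{-z}$ lie in $\Wip(\C_+)$; hence $\Delta_0^g(A)=g(A)-e^{-A}$ is a bounded operator on $X$. By Corollary \ref{CintC}, for every $\alpha\in[0,1]$ the regularized quotient $\Delta_\alpha^g$ also belongs to $\Wip(\C_+)$, so $\Delta_\alpha^g(A)$ is bounded, and the basic HP-norm estimate recorded after \eqref{HFc} gives $\norm{\Delta_\alpha^g(A)}\le M\,\norm[\Wip(\C_+)]{\Delta_\alpha^g}$.

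Next I would invoke the product rule \eqref{hpfc.e.prod} with the regularizable function $f(z)=z^\alpha$, whose calculus operator is the fractional power $A^\alpha$, and the multiplier $\Delta_\alpha^g\in\Wip(\C_+)$. Since $z^\alpha\cdot\Delta_\alpha^g=\Delta_0^g$, this yields $\Delta_\alpha^g(A)\,A^\alpha\subseteq A^\alpha\,\Delta_\alpha^g(A)=\Delta_0^g(A)$. Because $\Delta_\alpha^g(A)$ is bounded, the operator on the left has domain exactly $\dom(A^\alpha)$, so for every $x\in\dom(A^\alpha)$ I obtain the key identity $\Delta_0^g(A)x=\Delta_\alpha^g(A)\,A^\alpha x$. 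This factorization is the only step that asks for care, and it is where I expect a reader to pause: one must be sure that $\Delta_\alpha^g$ is a genuine element of $\Wip(\C_+)$ so that \eqref{hpfc.e.prod} supplies a bounded left factor and the domains match up. Granting Corollary \ref{CintC}, this is routine.

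The two estimates then follow immediately by taking norms: $\norm{\Delta_0^g(A)x}=\norm{\Delta_\alpha^g(A)\,A^\alpha x}\le M\,\norm[\Wip(\C_+)]{\Delta_\alpha^g}\,\norm{A^\alpha x}$. For $\alpha=1$ I substitute $\norm[\Wip(\C_+)]{\Delta_1^g}=2L[g]$ from \eqref{LemA2} in Proposition \ref{CMcor}, which gives \eqref{RowA01x}; for $\alpha\in(0,1)$ I substitute the bound $\norm[\Wip(\C_+)]{\Delta_\alpha^g}\le 8\,(L[g])^\alpha$ from \eqref{Int11} in Corollary \ref{CintC}, which gives \eqref{RowAx}. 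No further computation is needed.
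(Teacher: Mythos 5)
Your proof is correct, and it rests on the same ingredients as the paper's: the factorization $\Delta_0^g(z)=z^\alpha\,\Delta_\alpha^g(z)$, the product rule \eqref{hpfc.e.prod} for the extended HP-calculus, and the $\Wip(\C_+)$-norm bounds \eqref{LemA2} and \eqref{Int11}. The genuine difference is that you apply the product rule directly to $f(z)=z^\alpha$ and $A$, whereas the paper first shifts to $A+\delta$, $\delta>0$, proves $\|(A+\delta)^{-\alpha}\Delta_0^g(A+\delta)\|\le 8M(L[g])^\alpha$ --- where the left factor $(A+\delta)^{-\alpha}$ is a \emph{bounded} operator, so no domain bookkeeping is needed --- and then lets $\delta\to 0+$, using $(A+\delta)^\alpha x\to A^\alpha x$ (\cite[Proposition 3.1.9]{Haa2006}) together with $A^1_+$-convergence of the shifted symbols (dominated convergence) to pass to the limit. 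Your shortcut eliminates this entire approximation layer, and it is legitimate: $z^\alpha$ is regularizable (take $e(z)=(1+z)^{-\alpha}$, with $e(A)$ injective since $-1\in\rho(-A)$ for any bounded semigroup generator), so \eqref{hpfc.e.prod} yields $\Delta_\alpha^g(A)\,A^\alpha\subseteq A^\alpha\,\Delta_\alpha^g(A)=\Delta_0^g(A)$ with no injectivity assumption on $A$ itself, and, as you note, the natural domain of the left-hand side is exactly $\dom(A^\alpha)$ because the left factor $\Delta_\alpha^g(A)$ is bounded; hence $\Delta_0^g(A)x=\Delta_\alpha^g(A)A^\alpha x$ for $x\in\dom(A^\alpha)$ and the two estimates follow by taking norms, exactly as you say. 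What the paper's $\delta$-shift buys is robustness: it never invokes the product rule with an unbounded multiplier, which is the single point in your argument where the precise formulation of \eqref{hpfc.e.prod} (products taken on natural domains) must be quoted with care. What your route buys is brevity and the clean identity $\Delta_0^g(A)x=\Delta_\alpha^g(A)A^\alpha x$ without any limiting procedure.
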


\begin{proof}
Let $\alpha \in (0,1)$ be fixed. Clearly, for every $\delta >0$ the operator $- (A +\delta)$ generates  a bounded
$C_0$-semigroup  $(e^{-t(A+\delta)})_{t \ge 0}$ on $X$ such that $\sup\limits_{t\geq 0} \|e^{-t(A+\delta)}\| \le M.$
We use the HP-calculus to estimate the norm of
$$
(A+\delta)^{-\alpha} \Delta_0^g(A+\delta)
$$
for each  $\delta >0$ and then pass to the limit when $\delta \to 0+.$
Using the product rule for the HP-calculus and the estimate \eqref{Int11} for functions in $\Wip(\mathbb C_+)$,
we infer that
$$
\|[(\cdot+\delta)^{-\alpha} \Delta_0^g(\cdot +\delta)](A)\|=\|(A+\delta)^{-\alpha} \Delta_0^g(A+\delta)\| \le 8M (L[g])^\alpha
$$
for every $\delta>0$.
Since $\dom(A^\alpha) = \dom(A+\delta)^\alpha$ (\cite[Proposition 3.1.9]{Haa2006}), the above inequality
 implies that
\begin{equation}\label{szac0}
\| \Delta_0^g(A+ \delta)x\| \le 8M (L[g])^\alpha  \|(A+\delta)^\alpha x \|,
\qquad x\in \dom(A^\alpha), \quad \delta >0.
\end{equation}
Moreover, by \cite[Proposition 3.1.9]{Haa2006},
$$
\lim\limits_{\delta \to 0^+}(A+\delta)^\alpha x = A^\alpha x,
\qquad x \in \dom(A^\alpha).
$$
Since
$$
\Delta_0^g (z+ \delta) - \Delta_0^g (z) = \int\limits_0^\infty e^{-zs}(e^{-\delta s}-1)\,d\nu(s),
$$
by the dominated convergence theorem,
$$
A_+^1(\mathbb{C}_+)-\lim\limits_{\delta \to 0^+}
\Delta_0^g(\cdot + \delta) = \Delta_0^g(\cdot),
$$
and then
$$
\lim\limits_{\delta \to 0^+}
\,\Delta_0^g(A+\delta) x = \Delta_0^g(A) x,
\qquad x\in X.
$$
Now letting  $\delta \to 0+$ in (\ref{szac0}) we get
 \eqref{RowAx}.

Similarly,
using \eqref{LemA2}
instead  of \eqref{Int11},  we obtain a counterpart of \eqref{szac0} for $x \in \dom (A),$
and then arrive at
\eqref{RowA01x}.
\end{proof}

As a direct implication of Theorem \ref{Psimple1} and Corollary \ref{diffC} we obtain
the following statement where the value of functional $L$ at $g \in \mathcal B_1$ is  replaced by
its estimate in terms of $g'$.

\begin{cor}\label{Psimple2}
Let $-A$ be the generator of a bounded $C_0$-semigroup $(e^{-tA})_{t\ge 0}$ on  $X$, and let
 $g \in \mathcal{B}_1$.
Let $M:=\sup\limits_{t\ge 0} \|e^{-tA}\|$.
If $(g_n)_{n \ge 1}$ are given by \eqref{Defg}, then for all  $n \in \mathbb{N},$  $t>0$ and $x\in \dom(A),$
\begin{equation}\label{RowA001y}
\|g^n\left(tA/n\right)x-e^{-tA}x\|
\le 4 eM
\left(1+\frac{1}{|g'(1/n)|}\right)\sqrt{1+g'(1/n)}\,
 t\|A x\|.
\end{equation}
Moreover, if  $\alpha\in (0,1),$ then for all  $n \in \mathbb{N},$  $t>0$ and $x\in \dom(A^\alpha),$
\begin{equation}\label{RowA011y}
\| g^n\left(tA/n\right)x-e^{-tA}x\| \le 16 e M
\left(1+\frac{1}{|g'(1/n)|}\right)(1+g'(1/n))^{\alpha/2}
t^\alpha \|A^\alpha x\|.
\end{equation}
\end{cor}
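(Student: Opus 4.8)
The plan is to read the corollary as a rescaled instance of Theorem \ref{Psimple1} applied to the function $g_n$ (in place of $g$) and to the operator $tA$ (in place of $A$), after which the bound on $L[g_n]$ from Corollary \ref{diffC} is simply substituted. First I would record that, since $g\in\mathcal{B}_1$, the excerpt already guarantees $g_n\in\mathcal{B}_1$, and by \eqref{Defg} the quantity to be estimated is exactly
\[
g^n(tA/n)x-e^{-tA}x=g_n(tA)x-e^{-tA}x=\Delta_0^{g_n}(tA)x.
\]
For fixed $t>0$ the operator $-tA$ generates the rescaled semigroup $(e^{-s(tA)})_{s\ge0}=(e^{-(st)A})_{s\ge0}$, which is again bounded with the same constant $\sup_{s\ge0}\|e^{-s(tA)}\|=M$. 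Thus Theorem \ref{Psimple1}, applied to $g_n$ and $tA$, together with the scaling identities $\dom(tA)=\dom(A)$, $\dom((tA)^\alpha)=\dom(A^\alpha)$ and $(tA)^\alpha=t^\alpha A^\alpha$, will give
\[
\|\Delta_0^{g_n}(tA)x\|\le 2M\,L[g_n]\,t\|Ax\|,\qquad x\in\dom(A),
\]
and, for $\alpha\in(0,1)$,
\[
\|\Delta_0^{g_n}(tA)x\|\le 8M\,(L[g_n])^\alpha\,t^\alpha\|A^\alpha x\|,\qquad x\in\dom(A^\alpha).
\]

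Next I would insert the estimate $L[g_n]\le 2e\bigl(1+|g'(1/n)|^{-1}\bigr)\sqrt{1+g'(1/n)}$ from Corollary \ref{diffC}. In the first-order case this yields the prefactor $2M\cdot 2e\bigl(1+|g'(1/n)|^{-1}\bigr)\sqrt{1+g'(1/n)}=4eM\bigl(1+|g'(1/n)|^{-1}\bigr)\sqrt{1+g'(1/n)}$, which is precisely \eqref{RowA001y}. For the fractional case I would raise the same bound to the power $\alpha$, producing the factor $(2e)^\alpha\bigl(1+|g'(1/n)|^{-1}\bigr)^\alpha(1+g'(1/n))^{\alpha/2}$.

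The only point that genuinely needs care is the final cosmetic simplification, turning the $\alpha$-th power of $(1+|g'(1/n)|^{-1})$ into the first power and absorbing $(2e)^\alpha$ into the stated prefactor $16eM$. This rests on the elementary inequality $w^\alpha\le w$, valid for $w\ge1$ and $\alpha\in(0,1)$, applied to $w=2e\bigl(1+|g'(1/n)|^{-1}\bigr)\ge 2e>1$; it gives $(2e)^\alpha\bigl(1+|g'(1/n)|^{-1}\bigr)^\alpha\le 2e\bigl(1+|g'(1/n)|^{-1}\bigr)$, so that multiplying through by $8M(1+g'(1/n))^{\alpha/2}t^\alpha\|A^\alpha x\|$ reproduces \eqref{RowA011y}. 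Beyond this I expect no real obstacle: both invoked results are in hand, the identities $\dom((tA)^\alpha)=\dom(A^\alpha)$ and the invariance of the semigroup bound under positive rescaling are standard, and the remaining steps are routine.
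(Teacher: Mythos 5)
Your proposal is correct and follows essentially the same route as the paper, which presents the corollary as a direct implication of Theorem \ref{Psimple1} (applied, after rescaling $A\mapsto tA$, to $g_n\in\mathcal{B}_1$) combined with the bound on $L[g_n]$ from Corollary \ref{diffC}. Your final simplification using $a^\alpha\le a$ for $a\ge 1$, $\alpha\in(0,1]$, to absorb $(2e)^\alpha\bigl(1+|g'(1/n)|^{-1}\bigr)^\alpha$ into $2e\bigl(1+|g'(1/n)|^{-1}\bigr)$ is precisely the step the paper records in the parenthetical remark following the statement.
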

(To formulate \eqref{RowA011y} we used that $a^\alpha \le a$ for $a \ge 1$ and $\alpha \in (0,1].$)
\begin{remark}\label{fractional}

In particular, the above statement implies that if
 $g \in \mathcal{B}_1$ is such that
 \begin{equation}\label{polyn}
 g''(\tau) \le c_g \tau^{-1+\gamma}, \qquad \tau \in (0,1],
 \end{equation}
 for a fixed $c_g >0$ and $\gamma \in (0,1),$
and
$(g_n)_{n \ge 1}$ are given by \eqref{Defg}, then for all  $n \in \mathbb{N},$  $t>0$ and $x\in \dom(A),$
\begin{equation*}
\| \Delta^{g_{n}}_{0} (tA)x\|
\le 4 (c_g/\gamma)^{1/2} eM
%\left(1+\frac{1}{|g'(1/n)|}\right)\sqrt{1+g'(1/n)}
\left(1+\frac{1}{|g'(1/n)|}\right) n^{-\gamma/2} t\|A x\|.
\end{equation*}
Moreover, if  $\alpha\in (0,1),$ then for all  $n \in \mathbb{N},$  $t>0$ and $x\in \dom(A^\alpha),$
\begin{equation*}
\| \Delta^{g_{n}}_{0} (tA)x\| \le 16 (c_g/\gamma)^{\alpha/2} e M
\left(1+\frac{1}{|g'(1/n)|}\right)
n^{-\alpha\gamma /2}
t^\alpha \|A^\alpha x\|.
\end{equation*}

Note that the functions from $\mathcal{B}_1$ satisfying \eqref{polyn}  are characterized
in terms of their representing measures in Corollary \ref{corNew}.
\end{remark}

\begin{remark}\label{Rem4.3}
Recall that the modulus of continuity $\omega(\epsilon,x)$
of $e^{-\cdot A}x$ over the interval $[0,\epsilon], \epsilon >0,$ is defined as
\[
\omega(\epsilon,x):=\sup
\{\|e^{-tA}x-x\|:\, t\in [0,\epsilon]\}.
\]

Write, as usual,
\[
x=x-S_\epsilon x + S_\epsilon x, \qquad S_\epsilon x:=\frac{1}{\epsilon}\int_0^\epsilon e^{-sA}x\,ds,
\]
where
\[
\|x-S_\epsilon \|\le \omega(\epsilon,x),\qquad
\|A S_\epsilon x \|\le \frac{\omega(\epsilon,x)}{\epsilon}.
\]
From
\eqref{RowA001y} it follows that
\begin{align}\label{ddd}
\| \Delta^{g_n}_{0} (tA)x\|
\le&
\| \Delta^{g_n}_{0} (tA)(x-S_\epsilon x)\|
+\| \Delta^{g_n}_{0} (tA)S_\epsilon x\|
\\
\le& 2M\|x - S_\epsilon x\|\notag \\
+&
4 eM
\left(1+\frac{1}{|g'(1/n)|}\right)\sqrt{1+g'(1/n)}
 t\|A S_\epsilon x\|\notag \\
\le& C M\omega(\epsilon,x)
\left(1+t \frac{(1+g'(1/n))^{1/2}}{|g'(1/n)|\epsilon}\right).\notag
\end{align}
Then, setting
\[
\epsilon=\epsilon_n:=\frac{(1+q'(1/n))^{1/2}}{|g'(1/n)|},
\]
in \eqref{ddd},
we obtain
\begin{equation}\label{RowA001yy}
\| \Delta^{g_n}_{0} (tA)x\| \le
2C M(1+t)\omega(\epsilon_n,x),\qquad x\in X, \quad t >0.
\end{equation}
Thus, one can interpret our approximation results in terms of the modulus continuity, but
we, in general, avoid using this language in the paper. However, the notion of modulus of continuity appear to be
convenient in treating the optimality issues, see the end of this section.
\end{remark}

As an illustration of Corollary \ref{Psimple2}
we derive a partial generalization of one of the main results of L.-K. Chung from
\cite{Chung} (however only for bounded $C_0$-semigroups).
For every $t > 0$ consider
\begin{equation}\label{Cu11}
\varphi_t(u):=\sum_{k=0}^\infty a_k(t)u^k,
\end{equation}
where
\begin{equation}\label{Cu12}
a_k(t)\ge 0,\qquad \sum_{k=0}^\infty
a_k(t)=1,\qquad
\sum_{k=1}^\infty ka_k(t)=t,
\end{equation}
and
\[
\varphi_t''(1)=\sum_{k=2}^\infty k(k-1) a_k(t)<\infty.
\]
By \cite[Theorem 5]{Chung},
if
$(e^{-tA})_{t \ge 0}$ is a bounded $C_0$-semigroup on $X$ then
for every $x\in X,$
\begin{equation}\label{TChu}
e^{-tA} x=\lim_{n\to\infty}\,
[\varphi_t(n(n+A)^{-1})]^nx
\end{equation}
uniformly in $t$ from compacts in $[0,\infty).$
The analysis of the proof
of \cite[Theorem 5]{Chung} reveals that to show that
 the limit in \eqref{TChu} is uniform in $t$
the author uses the additional
assumption
\begin{equation}\label{chung}
\sup_{t\in [0,b]}\,\varphi''_t(1)<\infty \qquad \text{for all}\quad  b>0.
\end{equation}
However, we will not require \eqref{chung} in the following argument.
Using
\eqref{Cu11} and \eqref{Cu12}, let us
define the family of completely monotone functions
\[
g_t(z)=\varphi_t\left(\frac{t}{t+z}\right), \qquad t > 0,
\]
and note that $(g_t)_{t > 0} \subset \mathcal{B}_1.$
Moreover, if
\[
\lim_{u\to 1}\,(\varphi'_t(u)-t)= 0,\qquad \mbox{uniformly in}
\,\, t\in [a,b] \quad \text{for all}\,\, b>a>0,
\]
so that
\[
\lim_{s\to 0}\,(1+g_t'(s))= 0,\qquad \mbox{uniformly in}
\,\, t\in [0,b] \quad \text{for all}\,\, b>a>0,
\]
then by
 \eqref{RowA001y}
we infer that
\[
\|g_t^n(tA/n)x - e^{-tA}x\|\to 0,\qquad n\to\infty,
\]
for every  $x\in X$ uniformly
in $t$ from compacts in  $(0,\infty)$, where
\[
g_t(tA/n)=\varphi_t(n (n +A)^{-1}), \qquad t > 0.
\]
Thus, (\ref{TChu}) holds uniformly in $t$ from compacts in $(0,\infty).$
If one assumes that in addition \eqref{chung} holds, then the uniformity of convergence
in \eqref{TChu} can be extended to compact sets from $[0,\infty)$ (with zero included), see Remark \ref{RemChung} below.

The estimates in Corollary \ref{Psimple2} can be further improved
if $g \in \mathcal B_2.$
The following statement is again a direct implication of Theorem \ref{Psimple1} and Corollary \ref{diffC}, and its proof
is therefore omitted.

\begin{thm}\label{1Th}
Let $-A$ be the generator of a bounded $C_0$-semigroup $(e^{-tA})_{t\ge 0}$ on $X$, and let
$g \in \mathcal{B}_2$.
Let  $M:=\sup\limits_{t\ge 0} \|e^{-tA}\|$.
Then for all  $n \in \mathbb{N}$ and $t>0,$
\begin{equation}\label{RowA0}
\| \Delta_0^{g}(A)x\| \le M
\frac{(g''(0)-1 )}{2} \|A^2 x\|,\qquad
x\in \dom(A^2),
\end{equation}
and
\begin{equation}\label{RowA01}
\| \Delta_0^{g}(A)x\| \le M
(g''(0)-1 )^{1/2} \|A x\|,\qquad
x\in \dom(A).
\end{equation}
Moreover, if  $\alpha\in (0,2),$ then for all  $n \in \mathbb{N},$  and $t>0,$
\begin{equation}\label{RowA}
\| \Delta_0^{g}(A)x\| \le 4M (g''(0)-1)^\alpha
 \|A^\alpha x\|,\qquad
x\in \dom(A^\alpha).
\end{equation}
\end{thm}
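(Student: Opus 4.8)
The plan is to run the regularization argument from the proof of Theorem~\ref{Psimple1} essentially verbatim, feeding in the sharper $\mathcal{B}_2$ bounds established in Section~\ref{functions} in place of the $\mathcal{B}_1$ bound \eqref{Int11}. Concretely, I would invoke \eqref{cor12} (which gives $\Delta_2^g\in\Wip(\C_+)$ with $\|\Delta_2^g\|_{\Wip(\C_+)}=\tfrac{g''(0)-1}{2}$) to produce \eqref{RowA0}, the bound \eqref{cor120} for $\Delta_1^g$ to produce \eqref{RowA01}, and \eqref{Int12} for $\Delta_\alpha^g$ to produce \eqref{RowA}. Since the substantive function-theoretic work has already been carried out in Lemma~\ref{T1Dop} and Corollary~\ref{CintC}, what remains is only to transport these scalar estimates through the Hille--Phillips calculus.

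For definiteness consider the fractional case $\alpha\in(0,2)$; the cases $\alpha=1,2$ are identical. Fix $\delta>0$. As in Theorem~\ref{Psimple1}, the operator $-(A+\delta)$ generates a bounded $C_0$-semigroup with $\sup_{t\ge 0}\|e^{-t(A+\delta)}\|\le M$. For $\delta>0$ the regularizer $(z+\delta)^{-\alpha}$ already belongs to $\Wip(\C_+)$ (being the Laplace transform of $\Gamma(\alpha)^{-1}s^{\alpha-1}e^{-\delta s}$), so the product rule \eqref{hpfc.e.prod} yields the genuine identity
\[
(A+\delta)^{-\alpha}\,\Delta_0^g(A+\delta)=\bigl[\Delta_\alpha^g(\cdot+\delta)\bigr](A),
\]
using $\Delta_\alpha^g(z+\delta)=(z+\delta)^{-\alpha}\Delta_0^g(z+\delta)$. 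The key observation is that translation by $\delta$ multiplies the representing measure of $\Delta_\alpha^g$ by $e^{-\delta s}$ and hence cannot increase the $\Wip(\C_+)$-norm; combined with \eqref{Int12} this gives $\|\Delta_\alpha^g(\cdot+\delta)\|_{\Wip(\C_+)}\le 4(g''(0)-1)^{\alpha/2}$ uniformly in $\delta$. The HP-calculus norm bound, together with $\dom((A+\delta)^\alpha)=\dom(A^\alpha)$ from \cite[Proposition 3.1.9]{Haa2006}, then yields
\[
\|\Delta_0^g(A+\delta)x\|\le 4M(g''(0)-1)^{\alpha/2}\,\|(A+\delta)^\alpha x\|,\qquad x\in\dom(A^\alpha),
\]
the half-power being exactly what \eqref{Int12} supplies.

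Finally I would let $\delta\to 0+$ exactly as in Theorem~\ref{Psimple1}: on the right $(A+\delta)^\alpha x\to A^\alpha x$ by \cite[Proposition 3.1.9]{Haa2006}, while on the left $\Delta_0^g(A+\delta)x\to\Delta_0^g(A)x$, since $\Delta_0^g(\cdot+\delta)\to\Delta_0^g$ in $\Wip(\C_+)$ by the dominated-convergence argument already recorded in that proof (the representing measure of the difference is $(e^{-\delta s}-1)$ times that of $\Delta_0^g$, whose total variation tends to $0$). This gives \eqref{RowA}, and the same scheme with \eqref{cor120} and \eqref{cor12} delivers \eqref{RowA01} and \eqref{RowA0}. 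I expect no genuine obstacle here: the positivity and finiteness that make the scalar norms explicit were already secured in Section~\ref{functions}, and the only point demanding attention is the routine $\delta\to 0+$ bookkeeping, which is forced solely because $z^{-\alpha}$ is not itself a bounded multiplier and so must be regularized before the limit is taken.
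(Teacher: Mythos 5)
Your proposal is correct and is essentially the paper's own route: the paper omits a written proof of Theorem \ref{1Th} precisely because it is the argument of Theorem \ref{Psimple1} rerun with the $\mathcal{B}_2$ estimates \eqref{cor12}, \eqref{cor120} and \eqref{Int12} in place of \eqref{LemA2} and \eqref{Int11}, and your $\delta$-regularization bookkeeping (translation of the representing measure by $e^{-\delta s}$, $\dom((A+\delta)^\alpha)=\dom(A^\alpha)$, and the limit $\delta\to 0+$) matches that proof step for step. The only point worth recording is that your argument yields the exponent $(g''(0)-1)^{\alpha/2}$ in \eqref{RowA} rather than the printed $(g''(0)-1)^{\alpha}$; since the scaled versions in Corollary \ref{1Th0} and Theorem \ref{1Th0I} carry the $\alpha/2$ power, the printed exponent is a typo and your bound is the intended (and correct) one.
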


Let now $(g_t)_{t > 0}$ be a family of functions from  $\mathcal{B}_2.$
The next result allows us to formulate our approximation results for families $(g_t)_{t > 0}$ rather than just for a fixed
function $g.$ For its proof it suffices to recall
the equation \eqref{h00}
and to scale  Theorem \ref{1Th}
by replacing $A$ with $tA/n.$

\begin{cor}\label{1Th0}
Let $-A$ be the generator of a bounded $C_0$-semigroup $(e^{-tA})_{t\ge 0}$ on $X$, and let
 $(g_t)_{t > 0} \subset \mathcal{B}_2$.
Let  $M:=\sup\limits_{t\ge 0} \|e^{-tA}\|$.
Then for all  $n \in \mathbb{N}$ and $t>0,$
\begin{equation}\label{RowA00}
\| g_t^n(tA/n) x-e^{-tA}x\| \le M
\frac{(g_t''(0)-1 )}{2}\frac{t^2}{n} \|A^2 x\|,\qquad
x\in \dom(A^2),
\end{equation}
\begin{equation}\label{RowA001}
\|g_t^n(tA/n) x-e^{-tA}x\| \le M
(g_t''(0)-1 )^{1/2}\frac{t}{\sqrt{n}} \|A x\|,\qquad
x\in \dom(A),
\end{equation}
and
\begin{equation}\label{RowA011}
\| g_t(tA/n)^n x-e^{-tA}x\| \le 4M \left((g_t''(0)-1)
\frac{t^2}{n} \right)^\frac{\alpha}{2} \|A^\alpha x\|,\quad
x\in \dom(A^\alpha).
\end{equation}
\end{cor}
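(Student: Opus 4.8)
\emph{Plan.} The statement is nothing but Theorem~\ref{1Th} read off for a rescaled operator and a rescaled generating function, so the plan is to introduce no new analysis and simply to track how the quantities $g''(0)-1$, $A$, and the relevant domains transform under the substitution. Fix $t>0$ and $n\in\N$, and set $G:=(g_t)_n$, i.e.\ $G(z)=g_t^n(z/n)$ in the notation of \eqref{Defg}. By definition $\Delta_0^{G}(z)=g_t^n(z/n)-e^{-z}$, and since $G$ is applied to the operator $tA$ one has
\[
\Delta_0^{G}(tA)=g_t^n(tA/n)-e^{-tA}.
\]
Thus each left-hand side in \eqref{RowA00}--\eqref{RowA011} equals $\|\Delta_0^{G}(B)x\|$ with $B:=tA$, and the whole task reduces to bounding $\Delta_0^{G}(B)$.

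First I would record the two elementary facts that drive the reduction. Since $g_t\in\mathcal{B}_2$, the function $(g_t)_n$ again lies in $\mathcal{B}_2$, and by \eqref{h00}
\[
G''(0)=1+\frac{g_t''(0)-1}{n},\qquad\text{so}\qquad G''(0)-1=\frac{g_t''(0)-1}{n}.
\]
Secondly, for $t>0$ the operator $-B=-tA$ generates the bounded $C_0$-semigroup $(e^{-stA})_{s\ge 0}$ with $\sup_{s\ge 0}\|e^{-stA}\|=M$, while on the level of fractional powers $B^\beta=(tA)^\beta=t^\beta A^\beta$ with $\dom(B^\beta)=\dom(A^\beta)$ for every $\beta\ge 0$; in particular $e^{-B}=e^{-tA}$.

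With these in hand I would apply Theorem~\ref{1Th} to $G\in\mathcal{B}_2$ and the generator $-B$. The three estimates \eqref{RowA0}, \eqref{RowA01}, and \eqref{RowA} give, respectively,
\[
\|\Delta_0^{G}(B)x\|\le M\frac{G''(0)-1}{2}\|B^2x\|,\quad
\|\Delta_0^{G}(B)x\|\le M(G''(0)-1)^{1/2}\|Bx\|,
\]
and $\|\Delta_0^{G}(B)x\|\le 4M(G''(0)-1)^{\alpha/2}\|B^\alpha x\|$ for $\alpha\in(0,2)$. Substituting $G''(0)-1=(g_t''(0)-1)/n$ together with $B^2=t^2A^2$, $B=tA$, and $B^\alpha=t^\alpha A^\alpha$, and using $t^\alpha=(t^2)^{\alpha/2}$ in the fractional case, yields \eqref{RowA00}, \eqref{RowA001}, and \eqref{RowA011} on the domains $\dom(A^2)$, $\dom(A)$, and $\dom(A^\alpha)$. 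The only point that is not pure bookkeeping, and hence the part I expect to require a cited justification rather than a one-line formula, is the scaling law for fractional powers $(tA)^\beta=t^\beta A^\beta$ together with the identification $\dom((tA)^\beta)=\dom(A^\beta)$; this is where I would invoke compatibility of the extended HP-calculus under multiplication of the generator by a positive scalar (cf.\ \cite{Haa2006}). Everything else follows mechanically from Theorem~\ref{1Th} and \eqref{h00}.
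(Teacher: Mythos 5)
Your proposal is correct and is exactly the paper's own argument: the paper proves the corollary by recalling \eqref{h00} and scaling Theorem \ref{1Th}, i.e.\ applying it to the function $(g_t)_n\in\mathcal{B}_2$ and the generator $tA$, which is precisely your bookkeeping with $G''(0)-1=(g_t''(0)-1)/n$ and $(tA)^\beta=t^\beta A^\beta$. (You also, correctly, use the exponent $\alpha/2$ from Corollary \ref{CintC} in the fractional case, silently repairing the evident misprint $(g''(0)-1)^{\alpha}$ in \eqref{RowA}.)
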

\begin{remark}\label{RemChung}
Under further assumptions on $(g_t)_{t > 0}$ (e.g. $\sup_{t \in (0,b]} g_t''(0) <\infty$ for every $b >0$) one can, of course, replace $g_t''(0)-1$ in the corollary above by a constant  depending only on $b.$
Thus we can obtain the convergence of approximation formulas with rates uniform in $t$ from compacts in $[0,\infty).$
In particular, in this way, one can show that the convergence in \eqref{TChu} is uniform in $t$ from compacts from $[0,\infty)$
if \eqref{chung} is true.
To simplify our formulations
we omit this obvious improvement here and in the following approximation results. However, it is instructive to have in mind that
the uniformity of convergence present in standard approximation results holds true in our considerations too.
\end{remark}
 Corollary \ref{1Th0} comprises a number of approximation formulas and provide them with optimal convergence rates.
 In particular, it covers \cite[Theorem 1.3]{GTJFA}, and thus the classical Dunford-Segal, Yosida and Euler formulas in
\cite[Corollary 1.4]{GTJFA}.
It also includes formulas falling outside of the scope of \cite{GTJFA}, e.g. Kendall's approximation
corresponding to
$$g_t(z)=1-t+t\exp(-z/t),   \qquad t\in (0,1]. $$
 and more generally any $g_t$ with representing measures having compact supports.
 (Recall that the assumption $t>0$ in Corollary \ref{1Th0} and in the subsequent results
 is a matter of convenience, and it can be replaced by e.g. $t \in (0,a], a>0.$)
 An illustration of  Corollary \ref{1Th0} of the same nature is provided by a spline approximating sequence, and is discussed below.
\begin{example}\label{spline}
Let
\[
g(z):=\int_0^1 e^{-2zs}\,ds=\frac{1-e^{-2z}}{2z},\qquad z>0.
\]
Note that $g\in \mathcal{B}_2$, $g''(0)=4/3,$ and that $g$ is not an exponential of a Bernstein function
(since the representing measure of $g$ has a compact support, see a discussion in the introduction).
The functions $g^n(\cdot/n), n \in \N,$ can be represented in terms of the classical $B$-splines $B_n: \mathbb{R} \to \mathbb{R}$, $n \in \N \cup\{0\},$
defined recursively:
\begin{eqnarray*}
B_0(x)=
\begin{cases}
1,& \quad  x\in [0,1) , \\
0,& \quad  x\not \in [0,1),
\end{cases}
\end{eqnarray*}
and
$$
B_n(x) = \frac{x}{n}\, B_{n-1}(x)+ \frac{n+1-x}{n}\, B_{n-1}(x-1),
\qquad n\in \N.
$$
See \cite[Chapter 9]{Boor} for more details.
Note that since
\begin{equation}\label{zeroB}
B_{n-1}(x)=0\quad \mbox{if}\quad  x\in  (-\infty,0)
\cup (n,\infty),
\end{equation}
and
\[
 B_n'(x) = B_{n-1}(x) - B_{n-1}(x-1),
\]
one has
\begin{equation}\label{recInt}
B_n(x) = \int_0^1 B_{n-1}(x-s)\, ds,
\qquad n\in \N.
\end{equation}

Furthermore,
\begin{equation}\label{SpL}
g^n(z) = \int\limits_0^\infty e^{-2zs} B_{n-1}(s)\, ds,
\quad n\in \N.
\end{equation}
Indeed, arguing by induction, remark that
\[
g(z) = \int_0^1 e^{-2zs}\,ds =\int_0^1 e^{-2zs}B_0(s)\,ds =\int_0^\infty e^{-2zs}B_0(s)\,ds.
\]
If  \eqref{SpL} holds for  $n\in \N,$ then
using
\eqref{zeroB} and \eqref{recInt}, we obtain
\begin{align*}
g^{n+1}(z)
%=&\int_0^1 \int_0^\infty e^{-2z(s+t)} B_{n-1}(t)\,dt\,ds
=&\int_0^1 \int_{-s}^\infty e^{-2z(s+t)} B_{n-1}(t)\,dt\,ds
%=&\int_0^1 \int_0^\infty e^{-2z\tau} B_{n-1}(\tau-s)\,d\tau\,ds
= \int_0^\infty e^{-2z\tau} \int_0^1 B_{n-1}(\tau-s)\,ds\,d\tau\\
=& \int_0^\infty e^{-2z\tau} B_n(\tau)\,d\tau,
\end{align*}
hence  \eqref{SpL} is true for $n+1$, i.e. \eqref{recInt} holds.
So, if $g_n(z)=g^n(z/n), n \in \N,$
ans $-A$ is the generator of a bounded $C_0$-semigroup, then by the HP-calculus,
\[
g_n(tA/n)
%=\int_0^\infty e^{- 2 s tA/n} B_{n-1}(s)\,ds
= n \int_0^1 e^{- 2 s tA} B_{n-1}(ns)\,ds,\qquad n\in \N.
\]

Now Corollary \ref{1Th0} implies that
for all  $n \in \mathbb{N},$  $t>0,$ and $\alpha \in (0,2],$
\begin{align*}
\left\|n  \int_0^1 B_{n-1}(ns)e^{-2 s t A}x \,ds
-e^{-tA}x
\right\| \le M\frac{t}{\sqrt{3n}} \|A x\|,& \qquad
x\in \dom(A),
\\
\left\| n \int_0^1 B_{n-1}(ns)e^{-2 s t A}x \,ds
-e^{-tA}x
\right\|
\le 4M \left(\frac{t^2}{3n}\right)^\frac{\alpha}{2} \|A^\alpha x\|,& \qquad x\in \dom(A^\alpha),\\
\left\|n  \int_0^1 B_{n-1}(ns)e^{-2 s t A}x \,ds
-e^{-tA}x
\right\| \le M
\frac{t^2}{6 n} \|A^2 x\|,& \qquad
x\in \dom(A^2).
\end{align*}
\end{example}

\subsection{Higher order approximations}
Our functional calculus approach proves to be efficient for deriving also
the second order approximations formulas with rates.
Given the first order formulas, the derivation becomes comparatively straightforward.

Let us define
\begin{align}
d_0[g]:=&
\int_0^\infty
(1-s)^2 G(s)\,ds, \quad
g\in \mathcal{B}_4,
 \label{defd01}\\
d_1[g]:=&\int_0^\infty
\frac{(1-s)^2 (1+s)}{s^2}\,G(s)\,ds,\quad
g\in \mathcal{B}_{4,\infty},
\label{d1F}
\end{align}
where $G$ is given   by \eqref{dopp}.
By \eqref{GtSDop} we have
\begin{equation}\label{defd3}
d_0[g]=
\lim_{z\to 0+} \left(\frac{d}{dz}+1\right)^2 \Delta^g_2(z)=\frac{-3+6g''(0)+4g'''(0)+g''''(0)}{12}.
\end{equation}
Similarly, using again \eqref{GtSDop},
\begin{align*}
d_1[g]=&\int_0^\infty (s - 1
-s^{-1}+s^{-2})\,G(s)\,ds\\
=&-b[g]-c_1[g]+c_0[g],
\end{align*}
where $b[g]$ and $c_j[g], j=0,1,$ are defined by \eqref{defab2} and \eqref{cgg}, respectively.

We are now able to formulate our higher order approximation results for bounded $C_0$-semigroups on Banach spaces.
To our knowledge, apart from \cite{Pf85} and {\cite{Viel1}, higher order approximation formulas for $C_0$-semigroups and their corresponding convergence rates have not been addressed in the literature. As in the previous section, given a bounded $C_0$-semigroup $(e^{-tA})_{t\ge 0}$ on $X,$
we start with a norm
estimate for $\Delta_0^g(A)-2^{-1}(g''(0)-1)e^{-A}A^2, g \in \mathcal B_4,$ on appropriate domains. The approximation formulas will then be derived
by scaling as for the first order approximations considered above.

\begin{thm}\label{1Th+}
Let $-A$ be the generator of a bounded $C_0$-semigroup $(e^{-tA})_{t\ge 0}$ on $X$, and let
 $g \in \mathcal{B}_4$.
Denote $M:=\sup\limits_{t\ge 0} \|e^{-tA}\|$, and suppose that  $b$ and $d_0$ are given by
\eqref{defab2} and \eqref{defd3}, respectively.
Then for every $x\in \dom(A^3),$
\begin{align}\label{RowA0+}
\| g(A)x-e^{-tA}x&-2^{-1}(g''(0)-1)e^{-A}A^2x\|\\
 &\le M\left(\frac{(g''(0)-1)d_0[g]}{2}
\right)^{1/2}\|A^3x\|.\notag
\end{align}
Moreover,
for every $x\in \dom(A^4),$
\begin{align}\label{RowA01+}
\| g(A)x-e^{-tA}x&-2^{-1}{(g''(0)-1)}e^{-A}A^2x\|
\\
& \le M \left(|b[g]|\|A^3x\|
+d_0[g]\|A^4x\|\right).\notag
\end{align}
\end{thm}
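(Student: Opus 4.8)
The plan is to realize the operator inside the norm as $\Phi(A)$ for an explicit scalar function $\Phi$, to factor powers of $z$ out of $\Phi$, and then to convert each factorization into the stated bound via the product rule \eqref{hpfc.e.prod}, exactly in the spirit of the proof of Theorem \ref{Psimple1}. Throughout write $a[g]=(g''(0)-1)/2=\int_0^\infty G(s)\,ds$, and recall from Lemma \ref{T1Dop} that $g(z)-e^{-z}=z^2\int_0^\infty e^{-zs}G(s)\,ds$. Setting $\Phi(z):=g(z)-e^{-z}-a[g]\,z^2e^{-z}$ and using $a[g]e^{-z}=\int_0^\infty e^{-z}G(s)\,ds$, one gets $\Phi(z)=z^2\Psi(z)$ with
\[
\Psi(z)=\int_0^\infty\bigl(e^{-zs}-e^{-z}\bigr)G(s)\,ds .
\]
(I note in passing that the ``$e^{-tA}$'' in the statement should read $e^{-A}$, since the theorem concerns $g(A)$ and the $t$-dependence only enters after scaling.)

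For \eqref{RowA0+} I would peel off one further power of $z$. Since $\Psi(0)=0$, the function $h:=z^{-1}\Psi$ belongs to $\Wip(\C_+)$, with signed representing density $m$ given by $m(s)=\int_0^s G$ for $s<1$ and $m(s)=-\int_s^\infty G$ for $s\ge1$. A Fubini computation yields $\|h\|_{\Wip(\C_+)}=\int_0^\infty|m(s)|\,ds=\int_0^\infty|1-s|\,G(s)\,ds$, and Cauchy--Schwarz against the positive measure $G(s)\,ds$ bounds this by $(a[g]\,d_0[g])^{1/2}=\bigl((g''(0)-1)d_0[g]/2\bigr)^{1/2}$. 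Writing $\Phi=z^3h$ and applying \eqref{hpfc.e.prod} with $f(z)=z^3$ gives $\Phi(A)x=h(A)A^3x$ for $x\in\dom(A^3)$, whence $\|\Phi(A)x\|\le M\|h\|_{\Wip(\C_+)}\|A^3x\|$, which is \eqref{RowA0+}.

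For \eqref{RowA01+} I would instead Taylor-expand the inner difference one order further about $s=1$, writing $e^{-zs}-e^{-z}=-z(s-1)e^{-z}+f_s(z)$ with $f_s(z)=e^{-zs}-e^{-z}+z(s-1)e^{-z}$. Since $\int_0^\infty(s-1)G(s)\,ds=-b[g]$, this splits $\Psi(z)=b[g]\,ze^{-z}+R(z)$ with $R(z)=\int_0^\infty f_s(z)G(s)\,ds$, hence $\Phi(z)=b[g]\,z^3e^{-z}+z^2R(z)$. The crucial point is that each $f_s/z^2$ is a Laplace transform of a \emph{nonnegative} density: inverting term by term gives $f_s(z)/z^2=\int_0^\infty e^{-z\tau}\rho_s(\tau)\,d\tau$ with $\rho_s(\tau)=(\tau-s)_+-(\tau-1)_++(s-1)\mathbf{1}_{[1,\infty)}(\tau)$, which one checks equals $(s-\tau)\mathbf{1}_{[1,s]}$ for $s>1$ and $(\tau-s)\mathbf{1}_{[s,1]}$ for $s<1$; in both cases $\rho_s\ge0$ and $\int_0^\infty\rho_s\,d\tau=(s-1)^2/2$. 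Therefore $R=z^2\tilde R$ with $\tilde R\in\Wip(\C_+)$ nonnegative and $\|\tilde R\|_{\Wip(\C_+)}=\tilde R(0)=\tfrac12\int_0^\infty(1-s)^2G(s)\,ds=\tfrac12 d_0[g]\le d_0[g]$. Thus $\Phi=b[g]z^3e^{-z}+z^4\tilde R$, and applying \eqref{hpfc.e.prod} to the two summands (with $f=z^3$ and $f=z^4$) yields, for $x\in\dom(A^4)$,
\[
\|\Phi(A)x\|\le|b[g]|\,\|A^3e^{-A}x\|+\|\tilde R(A)A^4x\|\le M|b[g]|\,\|A^3x\|+Md_0[g]\,\|A^4x\|,
\]
which is \eqref{RowA01+}.

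I expect the main obstacle to be the identification of the representing measure of $f_s/z^2$ together with the verification of its nonnegativity and total mass $(s-1)^2/2$, as this is precisely what controls the $\Wip(\C_+)$-norm and produces the clean constant $d_0[g]$. The bookkeeping of domains when applying the product rule to the unbounded factors $z^3,z^4$ is routine (and, if one wishes to be fully rigorous about passing to limits, can be handled via the shift $A\mapsto A+\delta$ and $\delta\to0^+$ exactly as in Theorem \ref{Psimple1}); everything else reduces to Fubini and Cauchy--Schwarz.
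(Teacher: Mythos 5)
Your proposal is correct and proves both estimates (in fact with the slightly better constant $d_0[g]/2$ in the second one, which the paper's own computation also produces before relaxing it to $d_0[g]$); your remark that the $e^{-tA}$ in the displayed statement should read $e^{-A}$ is also right, since the $t$-dependence only enters upon scaling in Corollary \ref{1Th+C}. The difference from the paper is one of execution rather than substance. The paper works entirely at the operator level: by Lemma \ref{T1Dop} and the HP-calculus it writes $g(A)x-e^{-A}x=\int_0^\infty G(s)e^{-sA}A^2x\,ds$ as a strong Bochner integral, and then Taylor-expands the semigroup about $s=1$ with integral remainder, using $e^{-sA}x-e^{-A}x=-\int_1^s e^{-\tau A}Ax\,d\tau$ for \eqref{RowA0+} and $e^{-sA}x-e^{-A}x+(s-1)e^{-A}Ax=\int_1^s(s-\tau)e^{-\tau A}A^2x\,d\tau$ for \eqref{RowA01+}; the remainders are bounded directly by $M\|A^3x\|\int_0^\infty|1-s|G(s)\,ds$ and $\tfrac{M}{2}\|A^4x\|\int_0^\infty(1-s)^2G(s)\,ds$, with the same Cauchy--Schwarz step against $G(s)\,ds$ that you use. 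Your version transports this computation to the scalar side: your signed density $m$ and your kernels $\rho_s$ are exactly the kernels of the paper's operator-valued remainder integrals (e.g.\ $\rho_s(\tau)=(s-\tau)\mathbf{1}_{[1,s]}(\tau)$ is the kernel of $\int_1^s(s-\tau)e^{-\tau A}\,d\tau$), which is why every constant agrees. What your route buys is that positivity and total mass are verified once and for all in $\Wip(\C_+)$, after which the operator estimate is the one-line bound $\|h(A)\|\le M\|h\|_{\Wip(\C_+)}$, in the style of Theorem \ref{Psimple1}; the price is the product-rule and regularization bookkeeping for the unbounded factors $z^3$, $z^4$ and the identification $(z^2e^{-z})(A)x=e^{-A}A^2x$ on $\dom(A^2)$, all of which the paper's direct Bochner-integral argument sidesteps, making its proof a bit shorter and free of extended-calculus domain issues.
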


\begin{proof}
Note that by  Lemma \ref{T1Dop}, for every $x\in \dom(A^3),$
\begin{align*}
g(A)x-e^{-A}x=&
\int_0^\infty G(s)e^{-sA}A^2x\,ds \\
=&a[g]e^{-A}A^2x
+\int_0^\infty [e^{-sA}-e^{-A}]A^2x  G(s)\,ds,
\end{align*}
where $a[\cdot]$  is defined by \eqref{defab1}.
Since
\[
e^{-sA}x-e^{-A}x=-\int_1^s e^{-tA}Ax\,dt,\qquad x\in \dom(A^3),
\]
we infer that
\begin{equation}\label{340}
g(A)x-e^{-A}x-a[g]e^{-A}A^2x
=-\int_0^\infty
\left(\int_1^s
e^{-t A}A^3x\,dt\,\right)\,
G(s)\,ds.
\end{equation}
Hence,
\begin{align*}
\| \Delta_0^g(A)x-a[g]e^{-A}A^2x\|
\le&
M\|A^3x\|\int_0^\infty
|1-s|
G(s)\,ds
\\
\le& M\|A^3x\|\left(\int_0^\infty
(1-s)^2
G(s)\,ds
\int_0^\infty
G(s)\,ds\right)^{1/2}\\
=& M\|A^3x\|\left(a[g]
d_0[g]\right)^{1/2}.
\end{align*}
It remains to recall that  $a[g]=\frac{g''(0)-1}{2}$ by \eqref{defab3}.

If moreover $x\in \dom(A^4),$ then integrating by parts, we obtain
\begin{align*}
g(A)x-e^{-A}x=&
\int_0^\infty e^{-sA}A^2x G(s)\,ds\\
=&a[g]e^{-A}A^2x-e^{-A}A^3x
\int_0^\infty (s-1)
 G(s)\, ds\\
+&\int_0^\infty [e^{-sA}-e^{-A}+(s-1)e^{-A}A] A^2x G(s)\,ds.
\end{align*}
Taking into account that
\[
e^{-sA}x-e^{-A}x+(s-1)e^{-A}Ax
=\int_1^s (s-t)e^{-tA}A^2x\,dt,
\]
we have
\begin{align}
g(A)x-e^{-A}x-a[g]e^{-A}A^2x
=&-b[g]e^{-A}A^3x \label{34}\\
+&\int_0^\infty
\left(\int_1^s
(s-t)e^{-t A}A^4 x\,dt\,\right)\,
G(s)\,ds. \notag
\end{align}
Moreover,
\begin{align}\label{2180}
\left\|\int_0^\infty
\left(\int_1^s
(s-t)e^{-t A}A^4x\,dt\,\right)\,ds\right\|
\le&
M\|A^4x\|\int_0^\infty
\left(\int_1^s
(s-t)\,dt\,\right)\,
G(s)\,ds  \\
=&M\frac{\|A^4x\|}{2}\int_0^\infty
(s-1)^2\,G(s)\,ds \notag\\
\le&
M d_0[g]\|A^4x\|, \notag
\end{align}
and the estimate
(\ref{RowA01+}) follows.
\end{proof}

Replacing $b[g_n]$ and $d_0[g_n]$ by their expressions in terms of the derivatives of $g$ at zero,
 we formulate below Theorem \ref{1Th+} in a more explicit form.

\begin{cor}\label{1Th+C}
Let $-A$ be the generator of a bounded $C_0$-semigroup $(e^{-tA})_{t\ge 0}$ on $X$, and let
$(g_t)_{t > 0} \subset \mathcal{B}_4$.
Let  $M:=\sup\limits_{t\ge 0} \|e^{-tA}\|.$
Then for all  $t>0$, $n\in \N,$ and $x\in \dom(A^3),$
\begin{align}\label{RowA0++}
\|
g_t^n(tA/n) x-e^{-tA}x&-(2n)^{-1}(g_t''(0)-1) t^2e^{-tA}A^2x\|\\
\le& M C(g_t) t^3 n^{-3/2}\|A^3x\|,\notag
\end{align}
where
\[
C(g_t)=\left(\frac{(g_t''(0)-1)(g_t''''(0)-1)}{2}\right)^{1/2}.
\]
Moreover,
for all  $t>0$, $n\in \N,$ and  $x\in \dom(A^4),$
\begin{align}\label{RowA01++}
\|  g_t^n(tA/n) x-e^{-tA}x&-(2n)^{-1}{(g_t''(0)-1)} t^2e^{-tA}A^2x\|\\
\le&
M C_1(g_t)t^3n^{-2}(\|A^3x\|
+t\|A^4x\|),\notag
\end{align}
where
\[
C_1(g_t)=g_t''''(0)-1.
\]
\end{cor}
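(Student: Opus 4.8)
The plan is to deduce the corollary from Theorem~\ref{1Th+} by the same rescaling that turns Theorem~\ref{1Th} into Corollary~\ref{1Th0}, the only extra work being the evaluation of the functionals $b$ and $d_0$ on the rescaled function. Fix $t>0$, abbreviate $g:=g_t\in\mathcal{B}_4$ with $g\sim\nu$, and set $h:=g_n$, so that $h(z)=g^n(z/n)$ and $h\in\mathcal{B}_4$. Putting $B:=tA$, the operator $-B$ generates a bounded $C_0$-semigroup with the same bound $M$, and $h(B)=g_t^n(tA/n)$, $e^{-B}=e^{-tA}$, $\|B^kx\|=t^k\|A^kx\|$. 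Applying Theorem~\ref{1Th+} to $h$ and $B$ therefore produces exactly the left-hand sides of \eqref{RowA0++} and \eqref{RowA01++}: by \eqref{h00} the subtracted term is $2^{-1}(h''(0)-1)e^{-B}B^2x=(2n)^{-1}(g_t''(0)-1)t^2e^{-tA}A^2x$. It thus remains to bound $h''(0)-1$, $b[h]$ and $d_0[h]$ by the derivatives of $g$ at $0$.

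First I would record the moment forms of the three functionals. Writing $m_k:=\int_0^\infty(s-1)^k\,\nu(ds)$ and using $g''(0)=\int s^2\,\nu$, $g'''(0)=-\int s^3\,\nu$, $g''''(0)=\int s^4\,\nu$ together with $\int\nu=\int s\,\nu=1$, the formulas \eqref{defab3}, $b[g]=\frac{3g''(0)+g'''(0)-2}{6}$ and \eqref{defd3} rearrange into
\[
a[g]=\tfrac12 m_2,\qquad b[g]=-\tfrac16 m_3,\qquad d_0[g]=\tfrac1{12}m_4 .
\]
Since $h\sim\mu$, where $\mu$ is the $n$-fold convolution of the $1/n$-dilate of $\nu$, cumulants are multiplied by $n^{1-k}$, so the central moments of $\mu$ are $m_2^{(h)}=m_2/n$, $m_3^{(h)}=m_3/n^2$ and $m_4^{(h)}=m_4/n^3+3(n-1)m_2^2/n^3$. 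Hence $h''(0)-1=m_2/n=(g_t''(0)-1)/n$ (recovering \eqref{h00}), $b[h]=b[g]/n^2=-m_3/(6n^2)$, and, bounding termwise,
\[
d_0[h]=\frac{m_4}{12n^3}+\frac{(n-1)m_2^2}{4n^3}\le\frac{m_4+3m_2^2}{12n^2}.
\]

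The decisive step is a pair of elementary moment inequalities showing that both $|b[g]|$ and $\tfrac1{12}(m_4+3m_2^2)$ are dominated by $g''''(0)-1$. I would use the a priori facts $g''(0)\ge1$, $g'''(0)\le-1$ and $g''''(0)\ge(g''(0))^2$, the first two being Jensen's inequality for $s\mapsto s^2$ and $s\mapsto s^3$ under $\int s\,\nu=1$, the last Cauchy--Schwarz for $\nu$. Since $m_4+3m_2^2=g''''(0)+4g'''(0)+3(g''(0))^2$, combining $11g''''(0)-3(g''(0))^2\ge 8(g''(0))^2\ge8$ with $-4g'''(0)\ge4$ gives $m_4+3m_2^2\le12(g''''(0)-1)$, whence $d_0[h]\le(g''''(0)-1)/n^2$. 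Multiplying $g''''(0)-1\ge m_4/12$ by $g''''(0)-1\ge2m_2$ yields $(g''''(0)-1)^2\ge\tfrac16 m_2m_4$, so the Cauchy--Schwarz bound $|m_3|\le\sqrt{m_2m_4}$ gives $|b[h]|=|m_3|/(6n^2)\le(g''''(0)-1)/n^2$. Substituting these bounds and $h''(0)-1=(g_t''(0)-1)/n$ into the rescaled Theorem~\ref{1Th+} delivers \eqref{RowA0++} with $C(g_t)=\big((g_t''(0)-1)(g_t''''(0)-1)/2\big)^{1/2}$ and \eqref{RowA01++} with $C_1(g_t)=g_t''''(0)-1$.

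I expect the main obstacle to be exactly these moment estimates. The functionals $b$ and $d_0$ involve third and fourth central moments, which --- unlike the variance controlling the first order theory --- do not simply scale by a single power of $n$ under the convolution $g\mapsto g_n$; the genuine content is that the surviving cross term $3(n-1)m_2^2/n^3$ in $m_4^{(h)}$ and the skewness $m_3$ can both be absorbed into $g_t''''(0)-1$ uniformly in $n$. Everything else --- the rescaling, the identification of the correction term through \eqref{h00}, and reading off $\|B^kx\|=t^k\|A^kx\|$ --- is routine bookkeeping.
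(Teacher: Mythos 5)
Your proposal is correct, and its skeleton is exactly the paper's: apply Theorem \ref{1Th+} to $g_n=g_t^n(\cdot/n)$ and $B=tA$, identify the subtracted term through \eqref{h00}, and reduce everything to the bounds $|b[g_n]|\le (g_t''''(0)-1)/n^2$ and $d_0[g_n]\le (g_t''''(0)-1)/n^2$. Where you genuinely differ is in how those bounds are established. The paper relegates this to Appendix 1: Proposition \ref{simpleN} computes $g_n''(0),g_n'''(0),g_n''''(0)$ by brute-force differentiation of $g^n(t/n)$, Corollary \ref{Corgn} turns these into the exact identities $b[g_n]=b[g]/n^2$ and the two-term formula \eqref{CalA3} for $d_0[g_n]$, and the final estimates \eqref{CalA2}, \eqref{CalA4} are derived from the log-convexity inequality $[g^{(k+1)}(z)]^2\le g^{(k)}(z)g^{(k+2)}(z)$ for completely monotone functions (cited from Mitrinovi\v c), which gives $1\le g''(0)\le (g''(0))^2\le |g'''(0)|$ and $|g'''(0)|\le (g''(0)g''''(0))^{1/2}\le g''''(0)$. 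You instead exploit the probabilistic structure: since $\nu$ has mass $1$ and mean $1$, the representing measure of $g_n$ is the $n$-fold convolution of the $1/n$-dilate of $\nu$, so cumulants scale like $n^{1-k}$, and your central-moment identities $a[g]=m_2/2$, $b[g]=-m_3/6$, $d_0[g]=m_4/12$ reproduce \eqref{CalA1} and \eqref{CalA3} verbatim (I verified the algebra: $m_4-3m_2^2=g''''(0)+4g'''(0)+12g''(0)-3(g''(0))^2-6$, matching the paper's $n^{-3}$ coefficient). Your moment inequalities also check out: with $g''''(0)\ge(g''(0))^2\ge 1$ and $-g'''(0)\ge 1$ (Jensen and Cauchy--Schwarz for the probability measure $\nu$, the same information as the paper's completely monotone inequalities, just in central-moment language) one gets $11g''''(0)-3(g''(0))^2-4g'''(0)\ge 12$, hence $m_4+3m_2^2\le 12(g''''(0)-1)$ and $d_0[g_n]\le (g''''(0)-1)/n^2$; and $(g''''(0)-1)^2\ge m_2m_4/6$ combined with $|m_3|\le\sqrt{m_2m_4}$ gives $|b[g_n]|\le (g''''(0)-1)/(\sqrt{6}\,n^2)$, slightly stronger than needed. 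Your route buys conceptual transparency --- it explains \emph{why} the third and fourth central moments scale like $n^{-2}$ apart from the surviving $3(n-1)m_2^2/n^3$ variance-squared term, which the paper's chain-rule computation produces but does not illuminate --- at the cost of invoking cumulant additivity rather than elementary differentiation. One cosmetic point: the displayed expression in Theorem \ref{1Th+} contains a typo ($e^{-tA}x$ should read $e^{-A}x$), which your substitution $B=tA$ silently and correctly repairs.
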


\subsection{Optimality  for (general) $C_0$-semigroups}

We finish this section  with a  discussion of optimality for the obtained approximation rates.
A less general version of the following statement was proved  in \cite[Corollary 7.5]{GTJFA}
%for Bernstein functions $g$
by means of the spectral mapping theorem for the HP-calculus (Theorem \ref{spmapping})
and certain estimates for Bernstein functions.
Here we propose a slightly different argument based on Corollary \ref{1Th+C}.

\begin{prop}\label{Opt1}
Let $-A$ be the generator of a bounded $C_0$-semigroup $(e^{-tA})_{t\ge 0}$ on a Banach space $X$ such that $\overline{\ran}(A)=X,$
and let $g\in \mathcal{B}_2$, $g(z)\not\equiv e^{-z}$.
If $\{|s|:\,s\in \R, is\subset \sigma(A)\}=\R_{+}$, then there exists $c>0$  such that for all
$\alpha\in (0,2]$, $t>0$, and $n\in \N,$
\[
\|A^{-\alpha}(g^n(tA/n)-e^{-tA})\|\ge c \left(\frac{t^2}{n}\right)^{\alpha/2}.
\]
\end{prop}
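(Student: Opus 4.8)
The plan is to combine the spectral mapping theorem (Theorem~\ref{spmapping}) for the HP-calculus with a scalar estimate on the imaginary axis, which is exactly the circle of ideas behind Corollary~\ref{1Th+C}. First I would observe that the operator in question is a bounded function of $A$: writing $g_n:=g^n(\cdot/n)\in\mathcal{B}_2$, the symbol
\[
F(z):=z^{-\alpha}\bigl(g^n(tz/n)-e^{-tz}\bigr)=t^{\alpha}\,\Delta_\alpha^{g_n}(tz)
\]
lies in $\Wip(\C_+)$ for every $\alpha\in[0,2]$ by Corollary~\ref{CintC} (estimate \eqref{Int12} applied to $g_n\in\mathcal{B}_2$, the dilation $z\mapsto tz$ preserving $\Wip(\C_+)$). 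Hence $F(A)$ is bounded; the hypothesis $\overline{\ran}(A)=X$ makes the negative fractional powers available in the extended calculus, and by the product rule \eqref{hpfc.e.prod} one has $F(A)=A^{-\alpha}(g^n(tA/n)-e^{-tA})$. Since $F$ has real coefficients, $|F(is)|=|F(-is)|$, so Theorem~\ref{spmapping} together with $\{|s|:\,is\in\sigma(A)\}=\R_+$ gives
\[
\|A^{-\alpha}(g^n(tA/n)-e^{-tA})\|=\|F(A)\|\ge\sup_{r>0}|F(ir)|.
\]

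Next I would carry out the scalar analysis. Setting $\theta=tr/n$ and $h_n(\theta):=g(i\theta)^n-e^{-in\theta}$, one has $|F(ir)|=t^{\alpha}(n\theta)^{-\alpha}|h_n(\theta)|$. Because $g\sim\nu$ with $\nu$ a probability measure of mean $\int_0^\infty\tau\,\nu(d\tau)=1$ and finite second moment $g''(0)=\int_0^\infty\tau^2\,\nu(d\tau)$, the standard expansion of the characteristic function yields $g(i\theta)=1-i\theta-\tfrac{g''(0)}{2}\theta^2+o(\theta^2)$, hence
\[
\log g(i\theta)=-i\theta-\tfrac{g''(0)-1}{2}\theta^2+o(\theta^2),\qquad \theta\to0^+,
\]
where $g''(0)>1$ by \eqref{secD} since $g\not\equiv e^{-z}$. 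Evaluating at $\theta=1/\sqrt n$ gives $g(i/\sqrt n)^n e^{i\sqrt n}=\exp(-\tfrac{g''(0)-1}{2}+o(1))$, so that
\[
\rho_n:=|h_n(1/\sqrt n)|\longrightarrow 1-e^{-(g''(0)-1)/2}=:\rho_\infty>0.
\]
Taking $r$ with $\theta=1/\sqrt n$ already produces $|F(ir)|=(t^2/n)^{\alpha/2}\rho_n$, which has the right form.

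It remains to make the bound uniform in $(\alpha,t,n)$, and this is the delicate point. For each $n$ put $M_n:=\sup_{0<\theta\le 1/\sqrt n}|h_n(\theta)|$; this is strictly positive because, for fixed $n$, $|h_n(\theta)|=\tfrac{(g''(0)-1)n}{2}\theta^2(1+o(1))$ as $\theta\to0^+$. Since $M_n\ge\rho_n\to\rho_\infty>0$, there is $N_0$ with $M_n\ge\rho_\infty/2$ for $n\ge N_0$, and the finitely many $M_1,\dots,M_{N_0-1}$ are positive, so $c:=\tfrac12\inf_n M_n>0$. For every $n$ I would then pick $\theta_n\in(0,1/\sqrt n]$ with $|h_n(\theta_n)|\ge c$ and set $r_n=n\theta_n/t$; since $\theta_n\le n^{-1/2}$ one has $(n\theta_n)^{-\alpha}\ge n^{-\alpha/2}$ for all $\alpha\in(0,2]$. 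Choosing $s$ with $|s|=r_n$ and $is\in\sigma(A)$ (possible by hypothesis) and using $|F(is)|=|F(ir_n)|$, I obtain
\[
\|A^{-\alpha}(g^n(tA/n)-e^{-tA})\|\ge|F(ir_n)|=t^{\alpha}(n\theta_n)^{-\alpha}|h_n(\theta_n)|\ge c\,(t^2/n)^{\alpha/2}.
\]

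The main obstacle is precisely this uniformity over all $n$: the $o(\theta^2)$ remainder in the expansion of $\log g$ (only an $o$, since $g\in\mathcal{B}_2$ carries no information on higher derivatives) controls $\rho_n$ only asymptotically, so the finitely many small $n$ must be treated separately, relying on the fact that $h_n$ cannot vanish near the origin, equivalently on $\nu\ne\delta_1$, to keep $c$ strictly positive. A secondary point requiring care is the identification $F(A)=A^{-\alpha}(g^n(tA/n)-e^{-tA})$ and the well-definedness of $A^{-\alpha}$, which is where $\overline{\ran}(A)=X$ is used.
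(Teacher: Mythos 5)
Your proof is correct, and its skeleton is the same as the paper's: reduce the operator bound to a scalar lower bound on the imaginary axis via the spectral mapping theorem (Theorem~\ref{spmapping}), evaluated at frequencies $|s|\asymp \sqrt{n}/t$, i.e.\ at the scale $\theta\asymp n^{-1/2}$ where the discrepancy $g^n(i\theta)-e^{-in\theta}$ is of definite size. Where you genuinely diverge is in the scalar ingredient. The paper simply invokes its second order approximation result (Corollary~\ref{1Th+C}) for scalar functions: $|g^n(ist/n)-e^{-ist}-(2n)^{-1}(g''(0)-1)(ts)^2e^{-ist}|\le C(g)(t|s|)^3n^{-3/2}$, so that with $\epsilon=t|s|/\sqrt{n}$ fixed and small one gets $|g^n(ist/n)-e^{-ist}|\ge \bigl(\tfrac{g''(0)-1}{2}-C(g)\epsilon\bigr)\epsilon^2\ge c\epsilon^2$ uniformly in $t,n,s$, after which the computation $|s|^{-\alpha}\epsilon^2=\epsilon^{2-\alpha}(t^2/n)^{\alpha/2}$ finishes the proof in a few lines. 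You instead derive the lower bound from scratch via the characteristic function expansion $\log g(i\theta)=-i\theta-\tfrac{g''(0)-1}{2}\theta^2+o(\theta^2)$, obtaining $\rho_n\to 1-e^{-(g''(0)-1)/2}>0$ (using $g''(0)>1$ from \eqref{secD}), and then handle the finitely many small $n$ by the nonvanishing of $h_n$ near $0$ to get $\inf_n M_n>0$. This trade is worth noting: the paper's route is shorter given its earlier machinery, but the constant $C(g)$ in Corollary~\ref{1Th+C} involves $g''''(0)$, so as written it strictly requires $g\in\mathcal{B}_4$, whereas the proposition assumes only $g\in\mathcal{B}_2$; your CLT-type argument uses nothing beyond the finite second moment and therefore actually proves the statement under its stated hypothesis, at the price of the extra uniformity bookkeeping over $n$ (which you carry out correctly, including the point that $(n\theta_n)^{-\alpha}\ge n^{-\alpha/2}$ for all $\alpha\in(0,2]$ once $\theta_n\le n^{-1/2}$, so the constant is uniform in $\alpha$). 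Your treatment of the functional calculus identification $F(A)=A^{-\alpha}\bigl(g^n(tA/n)-e^{-tA}\bigr)$ via \eqref{hpfc.e.prod}, with injectivity of $A$ coming from $\overline{\ran}(A)=X$ and sectoriality, is also sound and is more explicit than the paper, which passes over this point in silence.
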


\begin{proof}
Let $\alpha\in (0,2]$ be fixed.
Using Corollary \ref{1Th+C} for scalar functions,
let
$t>0$, $n\in \N$ and $s\not=0$ be such that if
\[
\epsilon =\frac{t|s|}{\sqrt{n}},
\]
then $\epsilon$ is so small that $\epsilon \in (0,1)$ and
\[
|g^n(ist/n)-e^{-ist}|\ge c\left(\frac{t s}{\sqrt{n}}\right)^2=
c\epsilon^2.
\]
Then, by (the spectral mapping)  Theorem \ref{spmapping},
we have
\begin{align*}
\|A^{-\alpha}(g^n(tA/n)-e^{-tA})\|\ge&
\sup_{s\in \R}\,||s|^{-\alpha}(g^n(ist/n)-e^{-ist})|\\
\ge& c  |s|^{-\alpha}\epsilon^2=c\epsilon^{2-\alpha}\left(\frac{t^2}{n}\right)^{\alpha/2},
\end{align*}
for some constant $c >0.$
\end{proof}
It is easy to construct concrete examples of generators satisfying the assumptions of Proposition \ref{Opt1},
e.g. one may consider appropriate multiplication operators on $L^2(\mathbb R).$

Let us now address the  optimality
of the
estimate (\ref{RowA0++})
from
Corollary  \ref{1Th+C}. In view of substantial technical difficulties
the optimality will be shown only in a particular setting of $X=C_0(\R),$ a shift semigroup $(e^{-tA})_{t \ge 0}$ on $X,$ and Euler's approximation formula.
\begin{example}\label{Dit}
Let us start from abstract considerations under the assumptions of Corollary \ref{1Th+C}. For $x\in \dom(A^3),$ let
\[
R_{t,n}(A)x:=g^n(tA/n) x-e^{-tA}x-(2n)^{-1}(g''(0)-1) t^2 e^{-tA}A^2x.
\]
Then  (\ref{RowA0++}) asserts  that
\begin{equation}\label{Opt22}
\| R_{t,n}(A)x\|
\le M C(g) t^3 n^{-3/2}\|A^3x\|
\end{equation}
for all  $t>0$ and $n\in \N.$
If  $\omega(\epsilon,x)$ is the modulus continuity of $e^{-tA}x$
and $(S_\epsilon)_{\epsilon >0}$ is the family of averaging operators defined in  Remark \ref{Rem4.3},
then for every $x\in \dom(A^2),$
\[
\|A^2(x-S_\epsilon x)\|\le \omega(\epsilon,A^2x),\qquad
\|A^3 S_\epsilon x \|\le \epsilon^{-1}\omega(\epsilon,A^2x).
\]
Hence, from
(\ref{RowA00}) and (\ref{Opt22}),
it follows that if $x\in \dom(A^2)$ then
\begin{align*}
\|R_{t,n}(A)x\|\le&\|R_{t,n}(A)(x-S_\epsilon x)\|+
\|R_{t,n}(A)S_\epsilon x\|\\
\le& Cn^{-1}t^2[\|A^2x-A^2S_\epsilon x\|+
n^{-1/2}t\|A^3S_\epsilon x\|]\\
\le&  Cn^{-1}t^2[\omega(\epsilon,A^2x)+
n^{-1/2}\epsilon^{-1}t\omega(\epsilon,A^2x)],
\end{align*}
and setting $\epsilon=t/\sqrt{n}$ we have
\begin{equation}\label{Opt3}
\|R_{t,n}(A)x\|\le Ct^2 n^{-1}
\omega(t/\sqrt{n},A^2x),\qquad x\in \dom(A^2).
\end{equation}

Now, following \cite{D71},
 let $X$ be a Banach space
of continuous functions on $\R$, vanishing at both infinities,
denoted by $C_0(\R).$
Define a  $C_0$-semigroup $(e^{-tA})_{t \ge 0}$ on  $C_0(\R)$ by
\[
(e^{-tA}f)(s):=f(s+t),\qquad s\in \R,
\]
and note that $(Af)(s)=-f'(s)$ on a natural domain.

Consider Euler's approximation of $(e^{-tA})_{t \ge 0},$ i.e. choose
$g(z)=1/(1+z)$ in Corollary \ref{1Th+C}.
Then
\[
g^n(z/n)=\int_0^\infty e^{-zs} v_n(s)\,ds,\qquad \text{where} \quad
v_n(s):=\frac{n^n}{(n-1)!}e^{-ns} s^{n-1}, \quad n \in \N.
\]
Making use of  Lemma \ref{T1Dop}
and the proof of Theorem \ref{1Th+},
we infer that for $t>0$ and $f\in \dom(A^2),$
\begin{align*}
(R_{t,n}f)(s)=
(t^2Z_{t, n}A^2 f)(s), \qquad s \in \R,
\end{align*}
where
\begin{align*}
(Z_{t, n}h)(s):=&
\int_0^\infty W_n(\tau)\,(e^{-\tau t A}-e^{-tA})h(s)\,d\tau,\\
W_n(\tau):=&\int_0^\infty |\tau-y|v_n(y)\,dy,\quad \tau>0.
\end{align*}

Let $u\in C_0(\R)$ be given by
\[
u(s)=\begin{cases} 1-|s|,&\qquad |s|\le 1,\\
0,&\qquad |s|>1,
\end{cases}
\]
and let $f\in \dom(A^2)$ be defined as
\[
f=(A^2-I)^{-1}u,
\]
so that $A^2f=u+f.$
Taking into that
$\omega(\epsilon, u)\le \epsilon$ for every $\epsilon\in (0,1),$ we have
\begin{equation}\label{modEps}
\omega(\epsilon, A^2f )\le \omega(\epsilon, u)+\omega(\epsilon, f)\le 2\epsilon,\qquad
\epsilon\in (0,1).
\end{equation}
Note that
\begin{equation}\label{rrr}
(R_{t,n}f)(s)
=t^2\left((Z_{t,n}u)(s)+ (Z_{t,n}f)(s)\right), \qquad s \in \R,
\end{equation}
and (see the proof of Theorem \ref {1Th+})
\begin{equation}\label{zzz}
\|Z_{t,n}f\|\le c M n^{-2}\left(\|Af\|+\|A^2f\| \right),
\end{equation}
for some constant $c >0.$
Setting $t=1$ and $s=-1$ in \eqref{rrr}, write
\[
(Z_{t,n}u)(-1)
=\int_0^\infty W_n(\tau)\,\left(u(\tau-1)-1\right)\,d\tau=I_{1,n}+I_{2,n},
\]
where
\[
I_{1,n}:=-\int_0^2 W_n(\tau)\,|\tau-1| d\tau,\qquad
I_{2,n}:=-\int_2^\infty W_n(\tau)\, d\tau.
\]
Estimating the second integral from above, we obtain that for every $n \in \N,$
\begin{equation}\label{i2}
|I_{2,n}|\le  \int_2^\infty (\tau-1)^2 W_n(\tau)\, d\tau\le
\int_0^\infty (\tau-1)^2 W_n(\tau)\, d\tau\le 2n^{-2}.
\end{equation}
On the other hand, using
\[
\int_1^y (y-\tau)(\tau-1) d\tau=\frac{(y-1)^3}{6},
\]
and estimating the first integral from below we have:
\begin{align*}
|I_{1,n}|\ge \int_1^2 W_n(\tau)\,|\tau-1| d\tau
=& \int_0^1 \int_\tau^\infty (y-\tau) v_n(y) dy \,(\tau-1) d\tau\\
\ge& \int_1^2 v_n(y)\int_1^y (\tau-y)(\tau-1) d\tau\,dy
\\
=&\frac{n^n e^{-n}}{6(n-1)!}\int_0^1 e^{-ns} (1+s)^{n-1}
s^3 ds.
\end{align*}
Now
 Stirling's formula
%$n!\approx \sqrt{2\pi} n^n \sqrt{n},$ $n\to\infty$
and the property % relation
\[
\lim_{n\to\infty}\,
n^2\int_0^1 e^{-ns}(1+s)^{n-1} s^3\,ds=2,
\]
see \cite[p. 81]{Olver},
imply that
\begin{equation}\label{i1}
\lim\inf_{n\to\infty} n^{3/2}|I_{1,n}|
\ge \frac{1}{3\sqrt{2\pi}}.
\end{equation}

So, taking into account \eqref{rrr}, \eqref{zzz}, \eqref{i2} and \eqref{i1}, we infer that
\begin{equation}\label{above}
\lim\inf_{n\to\infty}\,n^{3/2}\|R_{1,n}f\|
\ge \frac{1}{3\sqrt{2\pi}}.
\end{equation}
By \eqref{modEps}, we have
$\sqrt{n}\omega(1/\sqrt{n},A^2f)\le 2,$
so we can
rewrite \eqref{above} in the form
\begin{equation}\label{above11}
\lim\inf_{n\to\infty}\, \frac{n}{\omega(1/\sqrt{n},A^2f)}\|R_{1,n}f\|
\ge \frac{1}{6\sqrt{2\pi}}.
\end{equation}
The inequality \eqref{above11} shows that \eqref{Opt3}
is sharp. Consequently,  \eqref{Opt22} is sharp too.
\end{example}
We believe that the optimality of (\ref{RowA0++}) can be shown in a more general context. However
we feel that the exposition will then be overloaded by unnecessary technicalities, and postpone
a general argument to another occasion.

\section{Approximation of bounded holomorphic $C_0$-semigroups.}\label{holomorph}

As one may expect, the statements above can be  improved in the framework of sectorially bounded holomorphic semigroups.
Recall that
for
%$g\in \mathcal{B}_2$
$g=e^{-\varphi}$,
where $\varphi$ is a Bernstein function,
 the statements on  approximation
of  sectorially bounded holomorphic $C_0$-semigroups were
deduced in \cite{GTJFA} from
the results on
approximation of general bounded $C_0$-semigroups.
To this aim, given a sectorially bounded holomorphic $C_0$-semigroup
$(e^{-tA})_{t \ge 0}$ on $X,$
we
considered in \cite{GTJFA} a bounded $C_0$-semigroup $(e^{-t \mathcal A})_{t \ge 0},$
\[
\mathcal{A}
=\left(\begin{array}{cc}
A & A\\
0 & A
\end{array}
\right), \qquad \dom (\mathcal A)=\dom (A) \oplus \dom (A),
\]
on the direct sum  $X\oplus X$, and, after certain manipulations, read off approximation results for
$(e^{-tA})_{t \ge 0}$ on $X$ from the ones for $(e^{-t \mathcal A})_{t \ge 0}$ on $X\oplus X.$
(Note that the idea to use operator matrices as above to the study of holomorphic $C_0$-semigroups goes back to \cite{Crandall}.)

Below we propose a more direct and explicit approach.
%for obtaining the approximation formulas with rates
%for bounded holomorphic $C_0$-semigroups.
It does not depend on the approximation results for bounded, not necessarily holomorphic, $C_0$-semigroups, and it improves the results from the previous section in a particular
framework of holomorphic semigroups. Moreover, it extends the results from \cite{GTJFA} by allowing $g \in \mathcal {BM},$ and sometimes offers even better estimates than those in
\cite{GTJFA} as far as the constants are concerned.
%As in the situation of general bounded $C_0$-semigroups,
%there is a version of second order approximation formulas.

For a sectorially bounded holomorphic $C_0$-semigroup $(e^{-tA})_{t \ge 0}$ on $X$ and $\beta \ge 0$ define
(see \eqref{boundbeta})
\begin{equation}\label{mb}
M_\beta:=\sup_{t > 0} \|t^\beta A^\beta e^{-tA}\|<\infty.
\end{equation}
Thus, the constat $M_0$ has a meaning of $M$ from the previous section dealing with general $C_0$-semigroups.
%$(e^{-tA})_{t \ge 0}.$

Note that
using moment's inequality,  for all $t >0$ and $\gamma \in (0,1),$ we have
\begin{align*}
t^{1+\gamma} \|A^{1+\gamma}e^{-tA}x\| \le& C(\gamma) t^{1+\gamma} \|A e^{-tA}x\|^{1-\gamma} \|A^2 e^{-tA}x\|^\gamma\\
=& C(\gamma)  \|t A e^{-tA}x\|^{1-\gamma} \|t ^2 A^2 e^{-tA}x\|^\gamma\\
\le& C(\gamma)M_1^{1-\gamma}M_2^\gamma \|x\|.
\end{align*}
Thus, since $M_1^{1-\gamma}M_2^\gamma \le (1-\gamma)M_1 + \gamma M_2$ and, by
\cite[p. 63]{Mart1},  $C(\gamma) \le 3, \gamma \in [0,1],$ we have
\begin{equation}\label{moment}
M_{1+\gamma} \le C(\gamma) (M_1 +M_2)\le 3(M_1+M_2).
\end{equation}

\subsection{First order approximations}
We start with several simple auxiliary estimates for functions of generators of holomorphic semigroups.
\begin{prop}\label{rfun}
Let $r \in \mathcal{BM},$  and denote
\[
r_j(z):=z^jr(z),\qquad j=1,2.
\]
Let $-A$ be the generator of a sectorially bounded holomorphic semigroup $(e^{-tA})_{t\ge 0}$ on $X$.
Then the following estimates hold:
\begin{itemize}
\item [\emph{(i)}] %\label{BB1}
$\|r(A)\|\le M_0r(0),\qquad \|r_1'(A)\|\le (M_0+M_1)r(0),$%\\
\item [\emph{(ii)}]\label{BB2}
$\|r_2'(A)x\|\le (2M_0+M_1)r(0)\|Ax\|,\qquad x\in \dom(A),$%\\
\item [\emph{(iii)}]%\label{BB3}
$\|r_2''(A)\|\le (2M_0+4M_1+M_2)r(0).$
\end{itemize}
\end{prop}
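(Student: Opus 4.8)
The plan is to reduce every estimate to the Bochner-integral representation \eqref{HFc} of the Hille--Phillips calculus and then to exploit the holomorphic bounds $\|s^{k}A^{k}e^{-sA}\|\le M_{k}$ to integrate termwise. Write $r=\Lap\nu$ with $\nu$ a positive bounded measure on $\R_{+}$; by \eqref{g0} one has $\|\nu\|_{\eM(\R_{+})}=\int_{0}^{\infty}\nu(ds)=r(0)$. The first bound in (i) is then immediate: $\|r(A)\|=\|\int_{0}^{\infty}e^{-sA}\,\nu(ds)\|\le M_{0}\int_{0}^{\infty}\nu(ds)=M_{0}r(0)$.

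For the remaining estimates I would first record the scalar identities obtained by differentiating under the Laplace integral, always keeping each power of $z$ paired with $e^{-sz}$:
\begin{align*}
r_{1}'(z) &= \int_{0}^{\infty}(1-sz)\,e^{-sz}\,\nu(ds),\\
r_{2}'(z) &= \int_{0}^{\infty}(2z-sz^{2})\,e^{-sz}\,\nu(ds),\\
r_{2}''(z) &= \int_{0}^{\infty}(2-4sz+s^{2}z^{2})\,e^{-sz}\,\nu(ds).
\end{align*}
Using $[z^{k}e^{-sz}](A)=A^{k}e^{-sA}$ and grouping so that every $z^{k}$ is absorbed into $s^{k}A^{k}e^{-sA}$, I would pass to the operator identities
\begin{align*}
r_{1}'(A) &= \int_{0}^{\infty}\bigl(e^{-sA}-sA\,e^{-sA}\bigr)\,\nu(ds),\\
r_{2}''(A) &= \int_{0}^{\infty}\bigl(2e^{-sA}-4sA\,e^{-sA}+s^{2}A^{2}e^{-sA}\bigr)\,\nu(ds).
\end{align*}
Each integrand is now bounded in norm uniformly in $s$ by $M_{0}+M_{1}$, respectively $2M_{0}+4M_{1}+M_{2}$, so the Bochner integrals converge absolutely and, since $\int_{0}^{\infty}\nu(ds)=r(0)$, yield the second bound in (i) and the bound in (iii).

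For (ii) the same computation gives, for $x\in\dom(A)$,
\[
r_{2}'(A)x=\int_{0}^{\infty}\bigl(2A\,e^{-sA}-sA^{2}e^{-sA}\bigr)x\,\nu(ds)=\int_{0}^{\infty}\bigl(2e^{-sA}-sA\,e^{-sA}\bigr)Ax\,\nu(ds),
\]
where in the second equality I move one factor of $A$ onto $x$, using that $e^{-sA}$ and its $s$-derivatives commute with $A$ on $\dom(A)$. This regrouping is \emph{essential} rather than cosmetic: $\int_{0}^{\infty}\|A\,e^{-sA}\|\,\nu(ds)$ need not converge near $s=0$, whereas $\|2e^{-sA}-sA\,e^{-sA}\|\le 2M_{0}+M_{1}$ is integrable against the finite measure $\nu$. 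The bound $(2M_{0}+M_{1})r(0)\|Ax\|$ follows.

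The step I expect to require the most care is the passage from the scalar identities to the operator identities, namely justifying that $[z^{k}e^{-sz}](A)=A^{k}e^{-sA}$ and that the calculus commutes with the integration in $s$. For $k\ge 1$ the functions $z^{k}e^{-sz}$ are not Laplace transforms of bounded measures, so they fall outside $\Wip(\C_{+})$ and must be read through the extended calculus; for a holomorphic semigroup $A^{k}e^{-sA}$ is a genuine bounded operator with $\tfrac{d^{k}}{ds^{k}}e^{-sA}=(-A)^{k}e^{-sA}$, and compatibility of the various calculi is guaranteed by the references already cited. The interchange with $\int(\cdot)\,\nu(ds)$ is then justified by the uniform bounds $\|s^{k}A^{k}e^{-sA}\|\le M_{k}$ together with finiteness of $\nu$ (e.g. by approximating $\nu$ with finitely supported measures, for which the identities are trivial, and using closedness of the calculus under the resulting norm limits). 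It is precisely the holomorphy of the semigroup, through the $M_{k}$, that renders the otherwise singular operator integrals $\int_{0}^{\infty}s^{k-1}A^{k}e^{-sA}\,\nu(ds)$ convergent; for a merely bounded semigroup the same manipulation would break down.
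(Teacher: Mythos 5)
Your proposal is correct and follows essentially the same route as the paper's proof: the same differentiated Laplace representations of $r_1'$, $r_2'$ and $r_2''$, the same grouping of each power $z^k$ with $s^k$ so that the integrands are controlled by $M_0,M_1,M_2$ via \eqref{mb}, and, for \emph{(ii)}, the same transfer of one factor of $A$ onto $x\in\dom(A)$, giving identical constants. The only divergence is in how the operator identities are legitimized: the paper shifts to $A+\delta$, where $s e^{-\delta s}\,\nu(ds)$ is a bounded measure so the product rule of the extended HP-calculus applies directly, and then lets $\delta\to 0+$ using dominated convergence, whereas your approximation-by-atomic-measures-plus-closedness sketch performs the same task at $\delta=0$; the paper's $\delta$-regularization is the cleaner way to make that one step fully precise.
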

\begin{proof}
The first assertion from \emph{(i)}
%\eqref{BB1}
is evident. Next, letting $r\sim \gamma$, note that
\[
r_1'(z)=r(z)-z\int_0^\infty se^{-sz}\,\gamma(ds)
\]
and use \eqref{mb} and the product rule for the (extended) HP-calculus.
 %(for $-(A+\delta), \delta >0$).
 It follows that for all  $\delta>0$ and $x\in X,$
\begin{align*}
r_1'(A+\delta)x=&r(A+\delta)x-\int_0^\infty Ase^{-sA}e^{-\delta s}x\,\gamma(ds)\\
-&
\int_0^\infty \delta se^{-sA}e^{-\delta s}x\,\gamma(ds).
\end{align*}
Hence
\[
\|r_1'(A+\delta)x\|\le M_0r(0)\|x\|+M_1 r(0)\|x\|
+M_0\|x\|\int_0^\infty \delta se^{-\delta s}\,\gamma(ds).
\]
Letting  $\delta \to 0+$  and using the dominated convergence theorem, we obtain
the second assertion in \emph{(i)}. The other statements \emph{(ii)} and \emph{(iii)}
%(\ref{BB2}) and (\ref{BB3})
can be proved similarly.
\end{proof}

Observe that
\begin{align}\label{rep}
g(z)-e^{-z}
=&[g(z)+2g'(z)+g''(z)]\\
-&2[g'(z)+g''(z)]+[g''(z)-e^{-z}],\notag
\end{align}
and
\begin{equation}\label{rep11}
g(z)-e^{-z}
=[g(z)+g'(z)]-[g'(z)+e^{-z}].
\end{equation}

Proposition \ref{rfun} and Lemma \ref{LB1} enable us to estimate each term in square brackets
from \eqref{rep} and \eqref{rep11}, and thus to estimate $g(A)-e^{-A}$ and
$g(A)x-e^{-A}x,$ $x\in \dom(A)$.

\begin{cor}\label{corD1}
Let
$g\in \mathcal{B}_2$,
and let $-A$ be the generator of a sectorially bounded holomorphic semigroup $(e^{-tA})_{t\ge 0}$ on $X$.
Then the following estimates hold:
\begin{itemize}
\item [\emph{(i)}]%\label{BB10}
$\|g'(A)+g''(A)\|\le (M_0+M_1)(g''(0)-1),$\\
%\end{eqnarray}
%\begin{eqnarray}
%\label{BB000}
\item [\emph{(ii)}] $\|g(A)+2g'(A)+g''(A)\|\le M_0 (g''(0)-1),$\\
%\end{eqnarray}
%\begin{eqnarray}
%\label{BB30x}
\item [\emph{(iii)}] $\|e^{-A}-g''(A)\|\le (M_0+2M_1+M_2/2)(g''(0)-1),$\\
%\end{eqnarray}
\item [\emph{(iv)}] for all $x\in \dom(A),$
\begin{equation}\label{firstA}
\|(g(A)+g'(A))x\|\le M_0(g''(0)-1)\|Ax\|,
\end{equation}
and
\begin{equation}
\label{BB10x}
\|g'(A)x+e^{-A}x\|\le (M_0+M_1/2)(g''(0)-1)\|Ax\|.
\end{equation}
\end{itemize}

\end{cor}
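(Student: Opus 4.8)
The plan is to reduce all five estimates to a single mechanism: express each bracketed term appearing in \eqref{rep} and \eqref{rep11} as $r(z)$, $r_1'(z)$, $r_2'(z)$ or $r_2''(z)$ for one of two explicitly chosen functions $r\in\mathcal{BM}$, and then read off the bound directly from Proposition \ref{rfun}. The two generating functions are $s_g(z):=z^{-1}(g(z)+g'(z))$ furnished by Lemma \ref{LB1} and $\Delta_2^g(z)=(g(z)-e^{-z})/z^2$ furnished by Lemma \ref{T1Dop}. Both lie in $\mathcal{BM}$ for $g\in\mathcal{B}_2$, and the first task is to record their values at the origin: by \eqref{LemA10} (together with \eqref{g0}) one has $s_g(0)=g''(0)-1$, while by \eqref{cor12} one has $\Delta_2^g(0)=(g''(0)-1)/2$.

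Next I would establish the elementary differential identities linking these functions to the brackets, using the notation $r_j(z)=z^jr(z)$ of Proposition \ref{rfun}. Taking $r=s_g$ gives $r_1(z)=g+g'$, hence $r_1'(z)=g'+g''$, and the second estimate in Proposition \ref{rfun}(i) yields (i) at once. For (ii) I would instead observe that the combination is itself completely monotone: since $g\sim\nu$,
\[
g(z)+2g'(z)+g''(z)=\int_0^\infty (1-s)^2 e^{-zs}\,\nu(ds),
\]
so $g+2g'+g''\in\mathcal{BM}$ with value $\int_0^\infty(1-s)^2\,\nu(ds)=g''(0)-1$ at $0$ by \eqref{secD}; the first estimate in Proposition \ref{rfun}(i) then gives (ii). For (iii) I would take $r=\Delta_2^g$, so that $r_2(z)=z^2\Delta_2^g(z)=g-e^{-z}$ and therefore $r_2''(z)=g''-e^{-z}$; Proposition \ref{rfun}(iii) then produces exactly the constant $(2M_0+4M_1+M_2)\Delta_2^g(0)=(M_0+2M_1+M_2/2)(g''(0)-1)$.

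The two parts of (iv) follow the same template. With $r=\Delta_2^g$ one has $r_2'(z)=g'+e^{-z}$, and Proposition \ref{rfun}(ii) gives the second inequality in (iv) with constant $(2M_0+M_1)\Delta_2^g(0)=(M_0+M_1/2)(g''(0)-1)$. The first inequality in (iv) is the only one requiring a short extra step: since $g+g'=z\,s_g$, the product rule \eqref{hpfc.e.prod} (applied with the regularizable function $z$ and the bounded multiplier $s_g$) gives $(g(A)+g'(A))x=s_g(A)Ax$ for $x\in\dom(A)$, whence $\|(g(A)+g'(A))x\|\le\|s_g(A)\|\,\|Ax\|\le M_0 s_g(0)\|Ax\|$ by the first estimate of Proposition \ref{rfun}(i).

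The computations are routine once this dictionary is fixed, so the only genuine obstacle is bookkeeping: correctly matching each bracket to the right derivative $r_j^{(k)}$ of the right generating function and verifying that the values $s_g(0)$ and $\Delta_2^g(0)$ propagate to the advertised constants. The factor $1/2$ in $\Delta_2^g(0)$ is precisely what halves the constants of Proposition \ref{rfun}(ii),(iii), and this is the point most prone to error. A secondary issue worth stating explicitly is the legitimacy of the product-rule identity in the first part of (iv), where one must confirm that $s_g(A)A$ and $A s_g(A)$ agree on $\dom(A)$ and that $s_g(A)$ is bounded by $M_0 s_g(0)$ — both immediate from \eqref{hpfc.e.prod} and Proposition \ref{rfun}(i).
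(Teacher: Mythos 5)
Your proposal is correct and follows essentially the same route as the paper: items (i), (iii) and \eqref{BB10x} via Proposition \ref{rfun} applied to $r=s_g$ (with $s_g(0)=g''(0)-1$ from Lemma \ref{LB1}) and $r=\Delta_2^g$ (with $\Delta_2^g(0)=(g''(0)-1)/2$ from Lemma \ref{T1Dop}), item (ii) via the explicit representation $g+2g'+g''=\int_0^\infty(1-s)^2e^{-zs}\,\nu(ds)$ together with \eqref{secD}, and \eqref{firstA} via the product rule \eqref{hpfc.e.prod} applied to $g+g'=z\,s_g$. The bookkeeping of the factor $1/2$ in $\Delta_2^g(0)$ is handled exactly as in the paper, so nothing further is needed.
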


\begin{proof}
Lemma \ref{LB1} implies that if $g\in \mathcal{B}_2,$
then
$r(z):=s_g(z)\in \mathcal{BM}$ and $
r(0)=g''(0)-1.$
Using Proposition \ref{rfun}, \emph{(i)}, where $r_1(z):=g(z)+g'(z)$, we obtain the assertion
\emph{(i)}.%(\ref{BB10}).

To prove \emph{(ii)} let $g\sim \nu$.
Note that for
\begin{equation}\label{qqq}
q(z):=g(z)+2g'(s)+g''(z)=\int_0^\infty (s-1)^2 e^{-sz}\,\nu(ds), \qquad z>0,
\end{equation}
we have $q \in \mathcal{BM}$ with $q(0)=g''(0)-1$.
Therefore,
\begin{equation}%\label{BB111}
\|q(A)\|\le M_0 (g''(0)-1),
\end{equation}
that is, \emph{(ii)} is true.
Next, by Lemma \ref{T1Dop}, if
$
r(z):=\Delta_2^g(z)$ then $ r \in \mathcal{BM}$ and $r(0)=\frac{g''(0)-1}{2}.$
Now setting $r_2(z)=g(z)-e^{-z}$ and applying   Proposition \ref{rfun}, \emph{(ii)} and \emph{(iii)} to $r$,
we obtain (\ref{BB10x}).
Finally, the estimate \eqref{firstA} follows from the product rule for the (extended) HP-calculus and Lemma \ref{LB1}.
\end{proof}

\begin{remark}
%Observe that also
%\begin{equation}\label{BB30}
%\|g(A)-g''(A)\|\le (2M_0+M_1)(g''(0)-1).
%\end{equation}
If $q$ is defined by \eqref{qqq}, then noting that
\[
g(z)-g''(z)=q(z)-2(g'(z)+g''(z)),
\]
and using  Proposition \ref{corD1}, \emph{(ii)} and \emph{(iv)},
one infers that
\begin{equation*}%\label{BB30}
\|g(A)-g''(A)\| %\|q(A)\|+2\|g'(A)+g''(A)\|
\le (2M_0+M_1)(g''(0)-1).
\end{equation*}
\end{remark}

The following theorem is a direct implication of norm estimates for $\Delta_1^g(A)$ on appropriate domains,
given by the HP-calculus.

\begin{thm}\label{tD3}
Let
$g\in \mathcal{B}_2$,
let $-A$ be the generator of a sectorially bounded holomorphic semigroup $(e^{-tA})_{t\ge 0}$ on $X$, and
let $K=3M_0+3M_1+M_2/2$.
Then
\begin{equation}\label{BB30y}
\|g(A)-e^{-A}\|\le K(g''(0)-1),
\end{equation}
and for every $x \in \dom(A),$
\begin{equation}\label{BB10y}
\|g(A)x-e^{-A}x\|
\le (2M_0+M_1/2)(g''(0)-1)\|Ax\|.
\end{equation}
Moreover, for every $\alpha \in (0,1)$ and every $x\in \dom(A^\alpha),$
\begin{equation}\label{assD}
\|g(A)x-e^{-A}x\|\le 3 M_0 K
(g''(0)-1)\|A^\alpha x\|,
\end{equation}
\end{thm}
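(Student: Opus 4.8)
The plan is to establish the three inequalities separately, treating the endpoints $\alpha=1$ and $\alpha=0$ first and then filling in $\alpha\in(0,1)$ by interpolation.

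First I would prove \eqref{BB10y}. The natural device is the algebraic splitting \eqref{rep11}, that is $g(z)-e^{-z}=[g(z)+g'(z)]-[g'(z)+e^{-z}]$, which is tailored precisely so that each bracket is already controlled on $\dom(A)$ by Corollary \ref{corD1}. Applying \eqref{firstA} to the first bracket and \eqref{BB10x} to the second, the triangle inequality yields, for $x\in\dom(A)$,
\[
\|g(A)x-e^{-A}x\|\le M_0(g''(0)-1)\|Ax\|+(M_0+M_1/2)(g''(0)-1)\|Ax\|,
\]
which is exactly \eqref{BB10y}. This step is routine once Corollary \ref{corD1} is available.

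Next I would prove the operator-norm bound \eqref{BB30y}. Here the efficient choice is the two-term splitting $g(z)-e^{-z}=[g(z)-g''(z)]+[g''(z)-e^{-z}]$. The first bracket is estimated by the Remark following Corollary \ref{corD1}, which gives $\|g(A)-g''(A)\|\le(2M_0+M_1)(g''(0)-1)$, and the second bracket is Corollary \ref{corD1}(iii), giving $\|g''(A)-e^{-A}\|\le(M_0+2M_1+M_2/2)(g''(0)-1)$. Adding the two constants produces exactly $K=3M_0+3M_1+M_2/2$. It is worth noting that the naive three-term splitting \eqref{rep} would lose a factor $M_0+M_1$; it is the sharper estimate for $\|g(A)-g''(A)\|$ coming from the Remark that yields the stated $K$.

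Finally, for $\alpha\in(0,1)$ I would obtain \eqref{assD} by interpolating the two endpoint estimates just proved. Set $B:=g(A)-e^{-A}$; then \eqref{BB30y} reads $\|B\|\le K(g''(0)-1)$ on $X$, while \eqref{BB10y} reads $\|Bx\|\le(2M_0+M_1/2)(g''(0)-1)\|Ax\|$ on $\dom(A)$. Since $B$ carries two spectral factors of $A$ near the origin (indeed $B=A^2\Delta_2^g(A)$ with $\Delta_2^g\in\mathcal{BM}$ by Lemma \ref{T1Dop}), the composition $BA^{-\alpha}$ is bounded, and I would estimate $\|BA^{-\alpha}\|$ by interpolating $B$ between $\mathcal{L}(X)$ and $\mathcal{L}(\dom(A),X)$, using that $\dom(A^\alpha)$ is the interpolation space between $X$ and $\dom(A)$ with embedding constants controlled by the sectoriality constants $M_\beta$ (cf. the moment inequality \eqref{moment}, where the numerical factor is at most $3$). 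This is the step I expect to be the main obstacle: the endpoints interpolate immediately to a geometric-mean constant $K^{1-\alpha}(2M_0+M_1/2)^\alpha\le K$, but extracting the clean, $\alpha$-independent constant $3M_0K$ requires combining this with the moment/sectoriality factor carefully rather than through the crude semigroup-integral splitting $A^{-\alpha}=\Gamma(\alpha)^{-1}\int_0^\infty t^{\alpha-1}e^{-tA}\,dt$, which would introduce unwanted blow-ups of the constant as $\alpha\to0^+$ or $\alpha\to1^-$.
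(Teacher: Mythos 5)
Your skeleton matches the paper's: \eqref{BB10y} is proved exactly as in the paper, by splitting via \eqref{rep11} and adding the two estimates \eqref{firstA} and \eqref{BB10x} of Corollary \ref{corD1}(iv). For \eqref{BB30y} you deviate slightly, and to your advantage: the paper cites the three-term identity \eqref{rep} together with Corollary \ref{corD1}(i)--(iii), but summed literally these give
\[
\bigl(M_0+2(M_0+M_1)+(M_0+2M_1+M_2/2)\bigr)(g''(0)-1)=(4M_0+4M_1+M_2/2)(g''(0)-1),
\]
which overshoots the stated $K$ by $M_0+M_1$; your two-term split $g-e^{-z}=(g-g'')+(g''-e^{-z})$, using the Remark's bound $(2M_0+M_1)(g''(0)-1)$ plus Corollary \ref{corD1}(iii), is the only combination of the listed estimates that reproduces $K=3M_0+3M_1+M_2/2$ exactly. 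One caveat you inherit: the Remark derives its constant from $g-g''=q-2(g'+g'')$, and Corollary \ref{corD1}(i)--(ii) applied to that identity yield $3M_0+2M_1$, not $2M_0+M_1$, so the sharper figure you are leaning on is itself not fully accounted for in the paper.

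For \eqref{assD} your idea (interpolate the endpoint bounds, with the moment inequality contributing the factor $3M_0$, and the bookkeeping $3M_0K^{1-\alpha}(2M_0+M_1/2)^{\alpha}\le 3M_0K$) is precisely the paper's, but the justification you sketch is the one step that would fail as written: on a general Banach space $\dom(A^\alpha)$ is \emph{not} an interpolation space between $X$ and $\dom(A)$ (real interpolation produces $(X,\dom(A))_{\alpha,p}$, which differs from $\dom(A^\alpha)$ absent extra hypotheses such as bounded imaginary powers), so "interpolating $B$ between $\Lin(X)$ and $\Lin(\dom(A),X)$" does not directly deliver a bound against $\|A^\alpha x\|$. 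The paper sidesteps interpolation-space theory entirely: it applies the moment inequality on the \emph{output} side, to the vector $y:=(A+\delta)^{-1}\Delta_0^g(A+\delta)x$, writing with $\beta=1-\alpha$
\[
\|(A+\delta)^{-\alpha}\Delta_0^g(A+\delta)x\|
=\|(A+\delta)^{\beta}y\|
\le 3M_0\,\|(A+\delta)y\|^{\beta}\,\|y\|^{1-\beta},
\]
then feeds in \eqref{BB30y} and \eqref{BB10y} (applied to $A+\delta$) for the two factors and finally lets $\delta\to 0+$ as in the proof of Theorem \ref{Psimple1}, using \cite[Proposition 3.1.9]{Haa2006}. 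The $\delta$-shift also repairs a second gap in your write-up: without injectivity of $A$ the operator $A^{-\alpha}$ in your expression $BA^{-\alpha}$ need not exist, whereas $(A+\delta)^{-\alpha}$ always does. Your instinct to avoid the integral representation $A^{-\alpha}=\Gamma(\alpha)^{-1}\int_0^\infty t^{\alpha-1}e^{-tA}\,dt$ was sound — the moment inequality is exactly how the paper keeps the constant $\alpha$-independent — so the correct completion of your step is the paper's regularize--apply-moment-inequality--pass-to-the-limit scheme, not an identification of $\dom(A^\alpha)$ with an interpolation space.
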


\begin{proof}
The inequalities
\eqref{BB30y} and \eqref{BB10y}
follow from Corollary  \ref{corD1} and the representations
\eqref{rep} and \eqref{rep11}.

Next, let $\alpha\in(0,1),$  $\beta=1-\alpha,$ and $\delta>0$.
Combining moment's inequality for the sectorial operator $A+\delta$
with \eqref{BB30y} and \eqref{BB10y} and employing the (extended) HP-calculus, we obtain for any $x\in X:$
\begin{align*}
\|(A+\delta)^{-\alpha}\Delta_0^g(A+\delta)x\|=&
\|(A+\delta)^\beta[(A+\delta)^{-1}\Delta_0^g(A+\delta)]x\|\\
\le& 3M_0\|\Delta_0^g(A+\delta)x\|^\beta
\|(A+\delta)^{-1}\Delta_0^g(A+\delta)x\|^{1-\beta}\\
\le& 3M_0 K^\beta(2M_0+M_1/2)^{1-\beta}
(g''(0)-1)\|x\|\\
\le& 3M_0 K(g''(0)-1)\|x\|.
\end{align*}
Thus, for all $x\in \dom(A^\alpha),$
\[
\|g(A+\delta)x-e^{-(A+\delta)}x\|
\le 3M_0 K
(g''(0)-1)\|(A+\delta)^\alpha x\|.
\]
Letting now  $\delta\to 0+$,
assertion (\ref{assD}) follows.
\end{proof}

Now by scaling we derive our first order approximation formulas with rates
for sectorially bounded holomorphic $C_0$-semigroups on Banach spaces.

\begin{cor}\label{tD3cor}
Let
$(g_t)_{t > 0}\subset  \mathcal{B}_2$,
and let $-A$ be the generator of a sectorially bounded holomorphic
$C_0$-semigroup $(e^{-tA})_{t \ge 0}$ on $X$.
Then for all $n\in \N$ and $t>0,$
\begin{equation}\label{BB30yc}
\|g_t^n(tA/n)-e^{-tA}\|\le K \frac{(g_t''(0)-1)}{n},
\end{equation}
where $K=3M_0+3M_1+M_2/2.$
Moreover for all $n\in \N,$ $t>0,$ and $x\in \dom(A),$
\begin{equation}\label{BB10yc}
\|g_t^n(tA/n)x-e^{-tA}x\|
\le (2M_0+3M_1/2)\frac{(g_t''(0)-1)}{n}t\|Ax\|,
\end{equation}
and for all $n\in \N,$ $t>0,$ $\alpha \in (0,1),$ and $x\in \dom(A^\alpha),$
\begin{equation}\label{momento}
\|g_t^n(tA/n)x-e^{-t A}x\|
\le 3M_0 K
\frac{(g_t''(0)-1)}{n}t^\alpha\|A^\alpha x\|.
\end{equation}
\end{cor}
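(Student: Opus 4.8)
The plan is to obtain Corollary \ref{tD3cor} from Theorem \ref{tD3} by a single scaling substitution, exactly as the corollaries to the general (non-holomorphic) results were derived. First I would fix $t>0$ and $n\in\N$ and introduce the auxiliary function $h:=(g_t)_n$, i.e. $h(z)=g_t^n(z/n)$, together with the scaled operator $B:=tA$. The two facts that make the substitution work are already recorded in the excerpt: since $g_t\in\mathcal{B}_2$ one has $(g_t)_n\in\mathcal{B}_2$, and by \eqref{h00},
\[
h''(0)-1=(g_t)_n''(0)-1=\frac{g_t''(0)-1}{n}.
\]
Thus $h$ is an admissible input for Theorem \ref{tD3}, and its ``defect'' $h''(0)-1$ carries the decisive factor $1/n$.

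Next I would verify that $-B=-tA$ again generates a sectorially bounded holomorphic $C_0$-semigroup and that its structural constants are unchanged. Writing $B^\beta=t^\beta A^\beta$ and $e^{-sB}=e^{-(st)A}$ for $\beta\ge 0$ and $s>0$, one gets $s^\beta B^\beta e^{-sB}=(st)^\beta A^\beta e^{-(st)A}$, so that $\sup_{s>0}\|s^\beta B^\beta e^{-sB}\|=M_\beta$ for every $\beta\ge 0$; in particular the constants $M_0$, $M_1$, $M_2$ and $K=3M_0+3M_1+M_2/2$ attached to $B$ coincide with those of $A$. It then remains only to translate the conclusions of Theorem \ref{tD3}, applied to $h$ and $B$, back into the original semigroup: here $h(B)=g_t^n(tA/n)$, $e^{-B}=e^{-tA}$, $\dom(B^\alpha)=\dom((tA)^\alpha)=\dom(A^\alpha)$, and $\|B^\alpha x\|=t^\alpha\|A^\alpha x\|$ for $\alpha\in(0,1]$.

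Feeding these identities into \eqref{BB30y}, \eqref{BB10y} and \eqref{assD}, and replacing $h''(0)-1$ by $(g_t''(0)-1)/n$ throughout, yields \eqref{BB30yc}, \eqref{BB10yc} and \eqref{momento}, respectively; the factors $t$ and $t^\alpha$ arise precisely from $\|Bx\|=t\|Ax\|$ and $\|B^\alpha x\|=t^\alpha\|A^\alpha x\|$. I expect no genuine obstacle here: the only points requiring care are the scaling invariance of the $M_\beta$ and the identity $(tA)^\alpha=t^\alpha A^\alpha$ for fractional powers, both standard for sectorial operators. I would also remark that this route in fact delivers the sharper constant $2M_0+M_1/2$ in the first-order estimate, which is dominated by the stated $2M_0+3M_1/2$, so \eqref{BB10yc} as written certainly holds.
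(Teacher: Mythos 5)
Your proposal is correct and coincides with the paper's own derivation: the corollary is obtained from Theorem \ref{tD3} precisely by the substitution $g\mapsto (g_t)_n$, $A\mapsto tA$, using \eqref{h00}, the invariance of the constants $M_\beta$ under this scaling, and the identity $(tA)^\alpha=t^\alpha A^\alpha$ for fractional powers. Your closing remark is also accurate: the scaling actually yields the sharper constant $2M_0+M_1/2$ in \eqref{BB10yc}, so the stated bound with $2M_0+3M_1/2$ holds a fortiori.
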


The above corollary can be essentially sharpened with respect to constants
in the right hand side of \eqref{momento}. The improvement requires
more involved arguments and finer function classes, and is contained in the following statement .

\begin{thm}\label{ThA}
Let $-A$ be the generator of a sectorially bounded holomorphic
$C_0$-semigroup $(e^{-tA})_{t\ge 0}$ on $X,$
and let $g\in \mathcal{B}_{2,\infty}$.
Then for  all $t >0,$ $\alpha\in [0,1]$ and $x\in \dom(A^\alpha),$
\begin{equation}\label{b1}
\|g^n(t A/n)x-e^{-tA}x \|\le
M_{2-\alpha} c_\alpha[g]\|A^\alpha x\|,
\end{equation}
where $c_\alpha$ and $M_{2-\alpha}$ are given by \eqref{cgg} and \eqref{mb}, respectively.
\end{thm}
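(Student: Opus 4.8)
The plan is to reduce \eqref{b1} to a single-function, single-operator estimate and then recover the stated form by scaling. Concretely, I would first prove that for any $h\in\mathcal{B}_{2,\infty}$ and any $B$ such that $-B$ generates a sectorially bounded holomorphic $C_0$-semigroup, one has
\[
\|h(B)x-e^{-B}x\|\le \widetilde M_{2-\alpha}\,c_\alpha[h]\,\|B^\alpha x\|,\qquad x\in\dom(B^\alpha),\ \alpha\in[0,1],
\]
where $\widetilde M_{2-\alpha}:=\sup_{s>0}\|s^{2-\alpha}B^{2-\alpha}e^{-sB}\|<\infty$ by \eqref{boundbeta} (finite for every $\alpha\in[0,1]$). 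The engine is the integral representation coming from Lemma \ref{T1Dop}: since $\Delta_2^h(z)=\int_0^\infty e^{-zs}G(s)\,ds$ by \eqref{GtSDop}--\eqref{dopp}, the bounded operator $\Delta_2^h(B)=\int_0^\infty e^{-sB}G(s)\,ds$ gives, for $x\in\dom(B^2)$,
\[
h(B)x-e^{-B}x=\Delta_0^h(B)x=B^2\Delta_2^h(B)x=\int_0^\infty B^2 e^{-sB}x\,G(s)\,ds.
\]

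The second step is to split $B^2 e^{-sB}=B^{2-\alpha}e^{-sB}\,B^\alpha$ and exploit holomorphy. For $x\in\dom(B^\alpha)$ and $s>0$ the vector $e^{-sB}x$ lies in $\dom(B^2)$, so $B^2 e^{-sB}x=B^{2-\alpha}e^{-sB}(B^\alpha x)$; combined with $\|B^{2-\alpha}e^{-sB}\|\le \widetilde M_{2-\alpha}s^{\alpha-2}$ this yields absolute convergence of $\int_0^\infty B^{2-\alpha}e^{-sB}(B^\alpha x)\,G(s)\,ds$, since by Proposition \ref{DeltaG} (formula \eqref{gamma})
\[
\int_0^\infty s^{\alpha-2}G(s)\,ds=c_\alpha[h]<\infty .
\]
Estimating under the integral gives $\|\Delta_0^h(B)x\|\le \widetilde M_{2-\alpha}\,c_\alpha[h]\,\|B^\alpha x\|$, first for $x\in\dom(B^2)$ and then for all $x\in\dom(B^\alpha)$: indeed $\Delta_0^h(B)=h(B)-e^{-B}$ is bounded (as $\Delta_0^h\in\Wip(\C_+)$), and for $x\in\dom(B^\alpha)$ the vectors $x_m:=m^2(m+B)^{-2}x\in\dom(B^2)$ satisfy $x_m\to x$ and $B^\alpha x_m=m^2(m+B)^{-2}B^\alpha x\to B^\alpha x$, so one passes to the limit.

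Finally I would specialize to $B=tA$ and $h=g_n$, where $g_n(z)=g^n(z/n)$ as in \eqref{Defg}. A short check shows $g_n\in\mathcal{B}_{2,\infty}$ (using $g\le 1$ together with $g\in\mathcal{B}_{2,\infty}$), while scale invariance gives $\widetilde M_{2-\alpha}(tA)=M_{2-\alpha}$ and $\|(tA)^\alpha x\|=t^\alpha\|A^\alpha x\|$; since $g_n(tA)=g^n(tA/n)$ this produces the claimed bound \eqref{b1} with constant $c_\alpha[g_n]$ (which by Theorem \ref{Asymp1} equals $\tfrac{g''(0)-1}{2n}+O(n^{-2})$, so that the right-hand side indeed decays in $n$). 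I expect the main obstacle to be the passage from $\dom(B^2)$ to $\dom(B^\alpha)$ in the second step: one must justify the commutation $B^2 e^{-sB}x=B^{2-\alpha}e^{-sB}B^\alpha x$ for merely $x\in\dom(B^\alpha)$ (resting on holomorphy, on $e^{-sB}x\in\dom(B^2)$ for $s>0$, and on the law of exponents for fractional powers), and to check that the two graph-norm-bounded maps $\int_0^\infty B^{2-\alpha}e^{-sB}(B^\alpha\,\cdot\,)\,G(s)\,ds$ and $h(B)-e^{-B}$, agreeing on the core $\dom(B^2)$, agree everywhere. The integrability of $s^{\alpha-2}G(s)$ near $0$ and $\infty$ is precisely the finiteness $c_\alpha[h]<\infty$ built into the class $\mathcal{B}_{2,\infty}$.
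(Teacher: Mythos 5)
Your proposal is correct, and its engine coincides with the paper's: both arguments rest on Lemma \ref{T1Dop} (writing $\Delta_2^g$ as the Laplace transform of the positive density $G$), on the holomorphy bound $\|B^{2-\alpha}e^{-sB}\|\le M_{2-\alpha}\,s^{\alpha-2}$ from \eqref{mb}--\eqref{boundbeta}, and on Proposition \ref{DeltaG} to identify $\int_0^\infty s^{\alpha-2}G(s)\,ds$ with $c_\alpha$. Where you genuinely diverge is in the regularization that makes the formal computation rigorous. The paper never works with $A$ directly: it replaces $A$ by $A+\delta$, writes $\Delta_0^g(A+\delta)=(A+\delta)^2[\Delta_2^g(\cdot+\delta)](A)$ via the product rule, splits $(A+\delta)^{2-\alpha}$ into $(A+\delta)A^{1-\alpha}$ plus an error controlled by $\|(A+\delta)^{1-\alpha}x-A^{1-\alpha}x\|\le C_{1-\alpha}(A)\delta^{1-\alpha}\|x\|$, and must then show that the resulting error term $N_\alpha(\delta)$ tends to $0$ as $\delta\to 0+$ (using Proposition \ref{Sgvz} and \eqref{pos3}), with the endpoint $\alpha=1$ handled by a separate computation via \eqref{rem}. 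You instead prove the inequality first on $\dom(B^2)$, where the product rule \eqref{hpfc.e.prod} together with closedness of $B^2$ (Hille's theorem for Bochner integrals) legitimizes $\Delta_0^h(B)x=\int_0^\infty B^{2-\alpha}e^{-sB}B^\alpha x\,G(s)\,ds$, and then extend to $\dom(B^\alpha)$ by the resolvent approximation $x_m=m^2(m+B)^{-2}x$, exploiting that $\Delta_0^h(B)$ is a bounded operator so that only the \emph{inequality}, not the integral representation, needs to pass to the limit. This buys a uniform treatment of all $\alpha\in[0,1]$ (no special case at $\alpha=1$) and eliminates the $\delta$-error bookkeeping entirely, at the modest price of invoking standard facts on fractional powers of sectorial operators --- commutation with $e^{-sB}$ and the law of exponents $B^{2-\alpha}B^\alpha y=B^2y$ on $e^{-sB}X\subset\dom(B^\infty)$ --- all available in \cite{Haa2006}, on which the paper relies anyway. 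One further point in your favor: the bound you actually obtain, $\|g^n(tA/n)x-e^{-tA}x\|\le M_{2-\alpha}\,c_\alpha[g_n]\,t^\alpha\|A^\alpha x\|$, is the correct and intended form of \eqref{b1}. The paper's own proof establishes the case $t=n=1$ (i.e.\ the estimate for $g(A)-e^{-A}$), and Corollary \ref{ThACor} is stated with exactly your $c_\alpha[g_n]\,t^\alpha$; the displayed \eqref{b1}, with $c_\alpha[g]$ and no factor $t^\alpha$, is a misprint --- it already fails for scalar $A=a$, $n=1$, $\alpha>0$, since $\sup_{t>0}|g(ta)-e^{-ta}|$ is a positive constant independent of $a$ while $M_{2-\alpha}c_\alpha[g]a^\alpha\to 0$ as $a\to 0+$. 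Your closing verification that $g_n\in\mathcal{B}_{2,\infty}$ (via $g^n\le g$ and a change of variables in $\int_1^\infty g(s/n)s^{-1}\,ds$) and the scale invariance $\widetilde M_{2-\alpha}(tA)=M_{2-\alpha}$ are exactly the remaining ingredients, so the reduction-by-scaling scheme is sound.
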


\begin{proof}
We consider the cases  $\alpha\in [0,1)$  and $\alpha=1$ separately.
Let $\alpha\in [0,1)$ be fixed.
By the product rule for the (extended) HP-calculus,
for every  $\delta>0,$
\begin{equation}\label{l11}
\Delta_0^g(A+\delta)=(A+\delta)^2
[\Delta_2^g(z+\delta)](A),
\end{equation}
where $\Delta_2^g(\cdot+\delta) \in \mathcal {BM},$
\begin{equation}\label{l12}
\Delta_2^g(z+\delta)=
\int_0^\infty e^{-zs}e^{-\delta s}G(s)\,ds,\,\, \, \, \text{and} \,\,\,\,
\|\Delta_2^g(z+\delta)\|_{\Wip(\C_+)}
=\Delta_2^g(\delta).
\end{equation}
Then, by (\ref{l11}) and (\ref{l12}), for every
for $x\in \dom(A^\alpha)$,
\begin{equation}\label{Equa}
\|\Delta_0^g(A+\delta)x\|
\le \int_0^\infty e^{-\delta s} G(s)\|(A+\delta)^{2-\alpha}e^{-sA}(A+\delta)^\alpha x\|\,ds.
\end{equation}
To estimate the right hand side of \eqref{Equa}, recall that $e^{-sA}(X)\subset \dom(A)$, $s>0$, and
by
\cite[Proposition 3.1.7]{Haa2006} for all $\beta\in [0,1]$ and $\delta>0,$
\[
\|(A+\delta)^\beta x-A^\beta x\|\le C_\beta(A)\delta^\beta\|x\|.
\]
Employing (\ref{b1}) we obtain
\begin{align}
 \int_0^\infty e^{-\delta s} &G(s) \|(A+\delta)^{2-\alpha}e^{-sA}  (A+\delta)^\alpha x \|\,ds \label{abov}\\
\le& \int_0^\infty e^{-\delta s} G(s)\|(A+\delta)A^{1-\alpha}e^{-sA}(A+\delta)^\alpha x \|\,ds \notag\\
+& \int_0^\infty e^{-\delta s} G(s)\|[(A+\delta)^{1-\alpha}-A^{1-\alpha}](A+\delta)e^{-sA} (A+\delta)^\alpha x \|\,ds\notag \\
\le& \|(A+\delta)^\alpha x\|\int_0^\infty e^{-\delta s} G(s)\left[\frac{M_{2-\alpha}}{s^{2-\alpha}}+
\delta \frac{M_{1-\alpha}}{s^{1-\alpha}}\right]\,ds \notag \\
+&\|(A+\delta)^\alpha x\|C_{1-\alpha}(A)\delta^{1-\alpha}
\int_0^\infty e^{-\delta s} G(s)[M_1s^{-1}+M_0\delta]\,ds.\notag
\end{align}
Then, using  (\ref{gamma}),
(\ref{gv11}) and (\ref{abov}), we infer from (\ref{Equa})
that
\begin{equation}\label{D}
\|\Delta_0^g(A+\delta)x\|\le (M_{2-\alpha}c_\alpha[g]+N_\alpha(\delta))\|(A+\delta)^\alpha x\|, \quad x\in \dom(A^\alpha),
\end{equation}
where
\begin{align*}
N_\alpha(\delta)=&
\frac{M_{1-\alpha}}{\Gamma(1-\alpha)}
\delta\int_0^\infty \frac{\Delta_2^g(z+\delta)}{z^\alpha}\, dz\\
+&C_{1-\alpha}(A)M_1\delta^{1-\alpha}
\int_0^\infty \Delta_2^g(z+\delta)\,dz
+C_{1-\alpha}(A)
M_0\delta^{1-\alpha}.
\end{align*}
Observe now that
\begin{equation}\label{lll}
\lim_{\delta\to 0+}\,N_\alpha(\delta)=0.
\end{equation}
Indeed,
by (\ref{pos3}) and the inequality $z+1\ge z^\nu, z>0,$ with  $\nu=(1-\alpha)/2\in (0,1]$,
we have
\begin{align*}
\frac{\delta}{2}\int_0^\infty
\frac{\Delta^g_2(z+\delta)}{z^\alpha}\,dz\le&
\delta\int_0^\infty
\frac{dz}{z^\alpha(z+\delta)(z+1)}\\
\le& \delta\int_0^\infty
\frac{dz}{z^{\alpha+\nu}(z+\delta)}\\
=&
\delta^{(1-\alpha)/2}\int_0^\infty
\frac{dz}{z^{(1+\alpha)/2}(z+1)},
\end{align*}
and \eqref{lll} follows.
Thus, letting  $\delta\to 0+$ in (\ref{D}),
 we conclude  that
\[
\|\Delta_0^g(A)x\|
\le
M_{2-\alpha}c_{\alpha}(g)\|A^\alpha x\|,\qquad x\in \dom(A^\alpha).
\]

Finally, if $\alpha=1$ and $x\in \dom(A)$,
then taking into account (\ref{gamma}), we obtain:
\begin{align}
\|\Delta_0^g(A+\delta)x\|\label{Eqq1}
\le& \int_0^\infty e^{-\delta s} G(s)\|(A+\delta)e^{-sA}(A+\delta) x\|\,ds\\
\le& \|(A+\delta)x\| \int_0^\infty e^{-\delta s} G(s)[M_1s^{-1}+\delta M_0]\,ds \notag\\
\le& \|(A+\delta)x\|\left\{M_1c_1[g]+ M_0\delta \int_0^\infty e^{-\delta s} G(s)\,ds\right\}.\notag
\end{align}
Thus  letting  $\delta\to 0+$ in (\ref{Eqq1}) and  using (\ref{rem}),
we get (\ref{b1}) for $\alpha=1$ as well.
\end{proof}

%To show that Theorem \ref{ThA} is an improvement indeed, we use the inequality \ref{moment}
Recall that, according to Corollary  \ref{tD3cor}, \eqref{momento} holds with a constant $3 M_0(3M_0+3M_1+M_2/2),$ while
Theorem \ref{ThA}, in view of  \eqref{moment},  Theorem \ref{Asymp1} and $M_0\ge 1$, provides a much better constant $3(M_1+M_2).$ Note that, curiously, $M_0$ does not enter the estimate
of Theorem \ref{ThA}.

Theorem \ref{ThA}
yields the following sharp approximation formula.

\begin{cor}\label{ThACor}
Let $-A$ be the generator of a sectorially bounded holomorphic
$C_0$-semigroup $(e^{-tA})_{t\ge 0}$ on $X$, and
let $g\in \mathcal{B}_{2}\cap L^k$  for some
$k\in \N$.
Then there exists  $C(g)>0$ such that for all $\alpha\in [0,1]$ and $x\in \dom(A^\alpha),$
\begin{equation}\label{b11}
\|g^n(t A/n)x-e^{-tA}x\|\le
M_{2-\alpha} \frac{g''(0)-1}{2n}
\left(1+C(g)n^{-1}\right)t^\alpha\|A^\alpha x\|.
\end{equation}
\end{cor}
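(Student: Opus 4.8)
The plan is to obtain the corollary directly from Theorem \ref{ThA}, applied to the rescaled functions $g_n$ defined in \eqref{Defg}, combined with the asymptotic estimate of $c_\alpha[g_n]$ supplied by Theorem \ref{Asymp1}. The only preliminary point is to verify that each $g_n$ lies in the class $\mathcal{B}_{2,\infty}$, on which both the functional $c_\alpha$ and Theorem \ref{ThA} are available.

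First I would check that $g_n \in \mathcal{B}_{2,\infty}$ for every $n \in \N$. By \eqref{h00} and the remark following it, $(g_n)_{n \ge 1} \subset \mathcal{B}_2$, so it remains to confirm the integrability condition $\int_1^\infty g_n(s)\,s^{-1}\,ds < \infty$. Substituting $u = s/n$ turns this integral into $\int_{1/n}^\infty g^n(u)\,u^{-1}\,du$; splitting at $u = 1$ and using $0 \le g \le 1$ (so that $g^n \le 1$ on $[1/n,1]$ and $g^n \le g$ on $[1,\infty)$) bounds it by $\log n + \int_1^\infty g(u)\,u^{-1}\,du$, which is finite since $g \in \mathcal{B}_2 \cap L^k \subset \mathcal{B}_{2,\infty}$. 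Thus $g_n \in \mathcal{B}_{2,\infty}$, and I may apply the estimate of Theorem \ref{ThA}, in the form $\|g(A)x - e^{-A}x\| \le M_{2-\alpha}c_\alpha[g]\|A^\alpha x\|$, to the function $g_n$ and to the operator $tA$. Invoking the scale invariance $M_{2-\alpha}(tA) = M_{2-\alpha}(A)$ of the constants in \eqref{mb}, the homogeneity $\|(tA)^\alpha x\| = t^\alpha\|A^\alpha x\|$, and the identity $g_n(tA) = g^n(tA/n)$, this yields
\[
\|g^n(tA/n)x - e^{-tA}x\| \le M_{2-\alpha}\,c_\alpha[g_n]\,t^\alpha\|A^\alpha x\|, \qquad \alpha \in [0,1],\ x \in \dom(A^\alpha).
\]

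It then remains to feed in Theorem \ref{Asymp1}, which applies precisely because $g \in \mathcal{B}_2 \cap L^k$: there is $C_0(g) > 0$ with $c_\alpha[g_n] \le (g''(0)-1)/(2n) + C_0(g)/n^2$ uniformly in $\alpha \in [0,1]$. Assuming $g \not\equiv e^{-z}$ (the case $g''(0) = 1$ being trivial, as then $g^n(tA/n) = e^{-tA}$ and both sides vanish), we have $g''(0) > 1$ by the discussion after \eqref{secD}, so the additive remainder can be absorbed into a multiplicative factor, $c_\alpha[g_n] \le \frac{g''(0)-1}{2n}\bigl(1 + C(g)n^{-1}\bigr)$ with $C(g) := 2C_0(g)/(g''(0)-1)$. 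Substituting this into the displayed bound gives exactly \eqref{b11}. No genuinely hard step remains, since all the analytic work is carried by Theorems \ref{ThA} and \ref{Asymp1}; the only points demanding care are the membership $g_n \in \mathcal{B}_{2,\infty}$ (needed so that $c_\alpha[g_n]$ is well defined and Theorem \ref{ThA} applies) and the conversion of the additive error term from Theorem \ref{Asymp1} into the multiplicative form appearing in \eqref{b11}.
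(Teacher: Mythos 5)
Your proposal is correct and follows essentially the same route as the paper, which presents Corollary \ref{ThACor} as an immediate consequence of Theorem \ref{ThA} (applied, after rescaling, to $g_n$ and $tA$) combined with the asymptotics $c_\alpha[g_n]=\frac{g''(0)-1}{2n}+\mathrm{O}(n^{-2})$ from Theorem \ref{Asymp1}. Your added care — verifying $g_n\in\mathcal{B}_{2,\infty}$, noting the scale invariance of $M_{2-\alpha}$, and handling the degenerate case $g''(0)=1$ when converting the additive remainder into the multiplicative factor — fills in exactly the details the paper leaves implicit.
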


\subsection{Higher order approximations and sharp constants}

In this subsection we obtain a version of the second order approximation formulas from  Theorem \ref{1Th+}
for sectorially  bounded holomorphic semigroups.
While in Theorem \ref{1Th+} the constants in approximation rates were controlled by functionals $d_0$ and $b$ introduced
in Section \ref{functions}, in the setting of holomorphic semigroups
 the functional $d_1$ substitutes
 $d_0.$

\begin{thm}\label{ThAx}
Let $-A$ be the generator of a sectorially bounded holomorphic
$C_0$-semigroup $(e^{-tA})_{t\ge 0}$ on $X$.
Assume that  $g\in \mathcal{B}_4\cap L^k$ for some $k\in\N$.
Then for all $\alpha\in [0,1]$ and $x\in \dom(A^\alpha),$
\begin{equation}\label{b1x}
\left \|g(A)x-e^{-A}x
-\frac{g''(0)-1}{2} A^2e^{-A}x \right\|
\le K(g, \alpha)\|A^\alpha x\|.
\end{equation}
where $$K(g, \alpha)=
|b[g]| M_{3-\alpha}
+2^{-1}d_1[g] M_{4-\alpha},$$
and $d_1$ and $M_\alpha$ are given by \eqref{d1F}
 and \eqref{mb}, respectively.
\end{thm}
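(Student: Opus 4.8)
The plan is to follow the template of the general second order result Theorem \ref{1Th+}, which already yields an \emph{exact} identity, and then to replace the crude bound $\|e^{-tA}A^{j}x\|\le M\|A^{j}x\|$ by the holomorphic smoothing estimate $\|A^{j-\alpha}e^{-tA}\|\le M_{j-\alpha}t^{-(j-\alpha)}$ coming from \eqref{mb}. Concretely, I would start from the expansion \eqref{34} established in the proof of Theorem \ref{1Th+}: writing $a[g]=\tfrac{g''(0)-1}{2}$ by \eqref{defab3} and using the integral Taylor remainder
\[
e^{-sA}A^2x-e^{-A}A^2x+(s-1)e^{-A}A^3x=\int_1^s (s-t)e^{-tA}A^4x\,dt,
\]
one obtains
\[
g(A)x-e^{-A}x-\tfrac{g''(0)-1}{2}\,e^{-A}A^2x
= -b[g]\,e^{-A}A^3x+\int_0^\infty G(s)\int_1^s (s-t)e^{-tA}A^4x\,dt\,ds,
\]
with $G$ given by Lemma \ref{T1Dop}. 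Since $g\in\mathcal{B}_4\cap L^k$ forces $g\in\mathcal{B}_{4,\infty}$, both $b[g]$ and $d_1[g]$ are finite, so the right-hand side is well defined.

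Next I would estimate the two terms separately using holomorphy. For the first term, factor $e^{-A}A^3x=A^{3-\alpha}e^{-A}\,A^\alpha x$ and use $\|A^{3-\alpha}e^{-A}\|\le M_{3-\alpha}$ (namely \eqref{mb} at $t=1$), so $\|b[g]\,e^{-A}A^3x\|\le |b[g]|\,M_{3-\alpha}\|A^\alpha x\|$. For the remainder, factor $e^{-tA}A^4x=A^{4-\alpha}e^{-tA}\,A^\alpha x$ and apply $\|A^{4-\alpha}e^{-tA}\|\le M_{4-\alpha}t^{-(4-\alpha)}$, bounding its norm by $M_{4-\alpha}\|A^\alpha x\|\int_0^\infty G(s)K_\alpha(s)\,ds$, where
\[
K_\alpha(s)=\left|\int_1^s (s-t)\,t^{-(4-\alpha)}\,dt\right|.
\]

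The crux is then the elementary, $\alpha$-uniform kernel inequality $K_\alpha(s)\le \tfrac{(1-s)^2(1+s)}{2s^2}$. For this I would bound $t^{-(4-\alpha)}\le t^{-3}$ on $t\ge1$ to treat $s>1$, getting $K_\alpha(s)\le \tfrac12 s^{-1}+\tfrac12 s-1$, and $t^{-(4-\alpha)}\le t^{-4}$ on $t\le1$ to treat $s<1$, getting $K_\alpha(s)\le\tfrac16 s^{-2}+\tfrac13 s-\tfrac12$; the claim then reduces to the nonnegativity of $\tfrac12(1-s^{-1})^2=\tfrac{(s-1)^2}{2s^2}$ (for $s>1$) and of $\tfrac{s^3-3s+2}{6s^2}=\tfrac{(s-1)^2(s+2)}{6s^2}$ (for $s<1$). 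Recognizing that $\int_0^\infty \tfrac{(1-s)^2(1+s)}{2s^2}\,G(s)\,ds=\tfrac12 d_1[g]$ by \eqref{d1F} bounds the remainder by $\tfrac12 d_1[g]\,M_{4-\alpha}\|A^\alpha x\|$, and adding the two contributions produces exactly $K(g,\alpha)\|A^\alpha x\|$.

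The main obstacle is rigor rather than ideas: the exact identity above lives a priori on $\dom(A^4)$, whereas the estimate is asserted for $x\in\dom(A^\alpha)$ with $\alpha\le1$, and the fractional-power factorizations $A^je^{-tA}x=A^{j-\alpha}e^{-tA}A^\alpha x$ must be justified (the semigroup is smoothing, so $e^{-tA}x\in\dom(A^\infty)$ for $t>0$, and $A^{j-\alpha}$ commutes with $e^{-tA}$). As in the proof of Theorem \ref{ThA}, I would run the whole computation with $A+\delta$ in place of $A$ — which again generates a sectorially bounded holomorphic semigroup, for which $\dom((A+\delta)^4)=\dom(A^4)$ and the relevant moment bounds hold — derive the estimate with $(A+\delta)^\alpha x$ on the right, and then let $\delta\to0^+$, using $\|(A+\delta)^\alpha x-A^\alpha x\|\to0$ together with dominated convergence in the $s$- and $t$-integrals. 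The integrability needed for the limit is precisely $\int_0^\infty G(s)K_\alpha(s)\,ds<\infty$, i.e.\ $G(s)s^{-2}$ integrable near $s=0$, which is exactly the finiteness of $d_1[g]$ guaranteed by $g\in\mathcal{B}_{4,\infty}$; this verification of integrability and the passage to the limit are the only steps requiring genuine care.
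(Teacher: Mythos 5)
Your proposal is correct and follows essentially the same route as the paper's proof: you start from the identity \eqref{34}, bound the $b[g]$ term by $|b[g]|\,M_{3-\alpha}\|A^{\alpha}x\|$, and reduce the remainder to the kernel inequality $K_\alpha(s)\le (1-s)^2(1+s)/(2s^2)$, which you verify by the pointwise bounds $t^{\alpha-4}\le t^{-3}$ (for $s>1$) and $t^{\alpha-4}\le t^{-4}$ (for $s<1$) instead of the paper's exact evaluation of $I_\alpha(s)$ followed by $s^{\alpha}\le \alpha s+(1-\alpha)$ --- an equivalent elementary computation leading to the same constant $2^{-1}d_1[g]M_{4-\alpha}$. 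Your extra $\delta$-regularization to justify the identity for $x\in\dom(A^{\alpha})$, $\alpha\le 1$, is a point of rigor the paper leaves implicit in this proof (it carries it out only for Theorem \ref{ThA}), but it does not alter the argument.
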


\begin{proof}
Fix $\alpha \in [0,1].$
By \eqref{34}
 we have
\begin{align*}
g(A)x-e^{-A}x
-a[g]A^2e^{-A}x
=&b[g]A^3e^{-A}x\\
+&\int_0^\infty
\left(\int_1^s
(s-t)A^4e^{-t A}x\,dt\,\right)\,
G(s)\,ds.
\end{align*}
Then,
\begin{align*}
\|g(A)x-e^{-A}x
-a[g]&A^2e^{-A}x\|
\\
\le& |b[g]| M_{3-\alpha}\|A^\alpha x\|
+M_{4-\alpha}\|A^\alpha x\|\int_0^\infty
I_\alpha(s)\,
G(s)\,ds,
\end{align*}
where
$$
I_\alpha(s):=\int_1^s (s-t)t^{\alpha-4}\,dt = \frac{(2-\alpha)s^3-(3-\alpha)s^2+s^\alpha}{(3-\alpha)(2-\alpha)s^2}.
$$
Since
\[
s^\alpha\le \alpha s+(1-\alpha), \qquad s>0,
\]
%and $3-\alpha\ge 2$, $\alpha\in [0,1]$,
we have
\begin{align*}
I_\alpha(s)\le&
\frac{(2-\alpha)s^3-(3-\alpha)s^2+\alpha s+(1-\alpha)}{2(2-\alpha)s^2}\\
=&\frac{(1-s)^2((2-\alpha)s+(1-\alpha)}{2(2-\alpha) s^2}\\
\le&
\frac{(1-s)^2(1+s)}{2s^2}.
\end{align*}

Therefore,
\begin{align*}
\|g(A)x-e^{-A}x
-a[g]&A^2e^{-A}x\|\\
\le&
|b[g]| M_{3-\alpha}\|A^\alpha x\|
+ \frac{d_1[g] M_{4-\alpha}}{2}\|A^\alpha x\|.
\end{align*}
\end{proof}

Now Corollary
\ref{Corgn} makes it possible to replace the constants in Theorem \ref{ThAx} by
their estimates in terms of $g$ and $M_\beta$.

\begin{cor}\label{ThAxC}
Let $-A$ be the generator of a sectorially bounded holomorphic
$C_0$-semigroup $(e^{-tA})_{t\ge 0}$ on $X$.
Assume that $g\in \mathcal{B}_4 \cap L^k$ for some $k\in \N$.
If $\alpha\in [0,1]$ then
there exists  $C(g)>0$ such that
for all $n\in \N$, $t>0$,  and  $x\in \dom(A^\alpha),$
\begin{align}\label{b1xC}
\|g^n(tA/n)x-e^{-tA}x
-& a[g]n^{-1}t^2A^2e^{-tA}x\|\\
\le&
C(g)[M_{3-\alpha}
+M_{4-\alpha}]\frac{t^\alpha}{n^2} \|A^\alpha x\|,
\end{align}
where $a[g]=(g''(0)-1)/2$.\notag
\end{cor}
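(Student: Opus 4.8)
The plan is to apply Theorem \ref{ThAx} not to $g$ directly but to the powered-and-rescaled function $g_n(z):=g^n(z/n)$, and then to reintroduce the parameter $t$ by a scaling argument, exactly as Corollary \ref{1Th0} was deduced from Theorem \ref{1Th}, Corollary \ref{tD3cor} from Theorem \ref{tD3}, and Corollary \ref{1Th+C} from Theorem \ref{1Th+}. First I would check that $g\in\mathcal{B}_4\cap L^k$ forces $g_n\in\mathcal{B}_4\cap L^k$ for every $n$: membership in $\mathcal{B}_4$ is preserved because, writing $g_n=h^n$ with $h(z)=g(z/n)$, the Leibniz rule expresses $g_n^{(3)}(0)$ and $g_n^{(4)}(0)$ as finite linear combinations of the finite quantities $g^{(j)}(0)$, $j\le 4$; and $g_n\in L^k$ follows from $g\le 1$ (so $g^{nk}\le g^k\in L^1$, giving $\int_0^\infty g^{nk}(z/n)\,dz<\infty$), whence in particular $g_n\in\mathcal{B}_{4,\infty}$, so $b[g_n]$, $c_\alpha[g_n]$ and $d_1[g_n]$ are well defined. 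Granting this, Theorem \ref{ThAx} applied to $g_n$ yields, for $x\in\dom(A^\alpha)$,
\[
\| g_n(A)x - e^{-A}x - a[g_n]\,A^2 e^{-A}x \| \le \left(|b[g_n]|\,M_{3-\alpha} + \tfrac12 d_1[g_n]\,M_{4-\alpha}\right)\|A^\alpha x\|.
\]

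The heart of the matter is then to show that the two functionals decay like $n^{-2}$, which is precisely what Corollary \ref{Corgn} furnishes. For the correction coefficient, \eqref{h00} and \eqref{defab3} give $a[g_n]=(g_n''(0)-1)/2=(g''(0)-1)/(2n)=a[g]/n$, matching the coefficient in \eqref{b1xC}. For the $b$-term, a direct computation of $g_n''(0)$ and $g_n'''(0)$ via the chain rule shows that every contribution of order $n^{-1}$ cancels, leaving the exact identity $b[g_n]=b[g]/n^2$, so $|b[g_n]|=|b[g]|\,n^{-2}$. For the $d_1$-term I would use the representation $d_1[g_n]=-b[g_n]-c_1[g_n]+c_0[g_n]$ together with the asymptotics of Theorem \ref{Asymp1}, namely $c_\alpha[g_n]=(g''(0)-1)/(2n)+r_\alpha(n)$ with $|r_\alpha(n)|\le C(g)n^{-2}$; since the leading $n^{-1}$ parts of $c_0[g_n]$ and $c_1[g_n]$ coincide, their difference is $r_0(n)-r_1(n)=O(n^{-2})$, and combined with $b[g_n]=O(n^{-2})$ this gives $|d_1[g_n]|\le C(g)n^{-2}$. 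Consequently the right-hand side above is bounded by $C(g)(M_{3-\alpha}+M_{4-\alpha})\,n^{-2}\|A^\alpha x\|$.

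Finally I would introduce $t$ by scaling, replacing $A$ with $tA$. The operator $-tA$ again generates a sectorially bounded holomorphic semigroup, and the constants $M_\beta$ are scale-invariant, since $\sup_{s>0}\|s^\beta(tA)^\beta e^{-stA}\|=\sup_{u>0}\|u^\beta A^\beta e^{-uA}\|=M_\beta$ after the substitution $u=st$. Using $g_n(tA)=g^n(tA/n)$, $(tA)^2=t^2A^2$ and $(tA)^\alpha=t^\alpha A^\alpha$, together with $a[g_n]=a[g]/n$ and the $n^{-2}$ bounds, turns the displayed estimate into exactly \eqref{b1xC}. The main obstacle is not the scaling, which is routine, but the two functional asymptotics packaged in Corollary \ref{Corgn}: the subtle point is that the first-order-in-$1/n$ part of $a[g_n]$ must \emph{survive} to produce the explicit correction term $a[g]n^{-1}t^2A^2e^{-tA}$, while the first-order parts of $b[g_n]$ and of $c_0[g_n]-c_1[g_n]$ must \emph{cancel} to yield the sharp $n^{-2}$ remainder — a cancellation that ultimately rests on the technical estimate of Theorem \ref{Asymp1}.
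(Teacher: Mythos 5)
Your proposal is correct and takes essentially the same route as the paper: Corollary \ref{ThAxC} is obtained by applying Theorem \ref{ThAx} to $g_n(z)=g^n(z/n)$ with the rescaled generator $tA$, using $a[g_n]=a[g]/n$, and then bounding the coefficients via Corollary \ref{Corgn}. In particular, your derivation of $b[g_n]=b[g]/n^2$ by the chain-rule cancellation is precisely Proposition \ref{simpleN} combined with \eqref{CalA1}, and your derivation of $d_1[g_n]\le C(g)n^{-2}$ from the identity $d_1[g]=-b[g]-c_1[g]+c_0[g]$ together with Theorem \ref{Asymp1} is exactly how the paper proves \eqref{CalA5}.
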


 Corollary \ref{ThAxC}
generalizes the corresponding result from \cite{Viel1}, where a second order Euler's approximation (with  $\alpha=0$) for holomorphic semigroups has been studied
using a completely different technique.

%\section{Applications to Euler's approximation
%of bounded holomorphic $C_0$-semigroups.}
\subsection{Optimality for holomorphic $C_0$-semigroups}

As for general $C_0$-semigroups,  our approximation
formulas for holomorphic semigroups are sharp, as the next proposition shows.

\begin{prop}\label{Opt2}
Let $-A$ be the generator of a sectorially bounded holomorphic $C_0$-semigroup $(e^{-tA})_{t\ge 0}$ on a Banach space $X$ such that $\overline{\ran}(A)=X$ and let $g\in \mathcal{B}_4$, $g(z)\not\equiv e^{-z}$.
If $\R_{+}\subset \sigma(A)$, then there exists $c>0$  such that for all
$\alpha\in [0,1],$  $t>0$, and $n\in \N,$
\[
\|A^{-\alpha}(g^n(tA/n)-e^{-tA})\|\ge c \frac{t^\alpha}{n}.
\]
Moreover, if in addition $b[g]\neq 0$,  then
there is  $c>0$  such that for all
$\alpha\in [0,1],$  $t>0$, and $n\in \N,$
\[
\|A^{-\alpha}(g^n(tA/n)-e^{-tA}
-a[g]n^{-1}t^2A^2e^{-tA})\|
\ge
c\frac{t^\alpha}{n^2}.
\]
\end{prop}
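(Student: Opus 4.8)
The plan is to reduce both inequalities to sharp lower bounds for scalar quantities and to promote these to operator-norm lower bounds via the spectral mapping theorem, exactly as in the proof of Proposition~\ref{Opt1}, but now exploiting that $\R_{+}\subset\sigma(A)$ rather than the imaginary axis. First I would record that $g''(0)>1$: since $g\in\mathcal{B}_2$ and $g(z)\not\equiv e^{-z}$, this is immediate from \eqref{secD}. For fixed $t>0$ and $n\in\N$ set $F_{t,n}(z):=z^{-\alpha}(g^n(tz/n)-e^{-tz})$; the bracket lies in $\Wip(\C_+)$ and vanishes to second order at the origin, so $F_{t,n}$ belongs to the extended HP-calculus and $F_{t,n}(A)=A^{-\alpha}(g^n(tA/n)-e^{-tA})$ is bounded. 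Applying Theorem~\ref{spmapping} in its extended-calculus form at any point $r\in\R_{+}\subset\sigma(A)$ gives $\|A^{-\alpha}(g^n(tA/n)-e^{-tA})\|\ge |F_{t,n}(r)|$, and I would always take $r=c_0/t$ for a constant $c_0>0$ to be fixed, so that $tr=c_0$ and the scaling factor becomes $r^{-\alpha}=(t/c_0)^\alpha$.

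For the first inequality, choose e.g.\ $c_0=1$. Then $|F_{t,n}(1/t)|=t^\alpha\,(g^n(1/n)-e^{-1})$, so it suffices to show $g^n(1/n)-e^{-1}\ge c/n$ for all $n$. Positivity $g^n(1/n)-e^{-1}>0$ is Proposition~\ref{exp}, while the scalar expansion (obtained from $n\log g(1/n)=-1+\tfrac{g''(0)-1}{2n}+O(n^{-2})$) yields $g^n(1/n)-e^{-1}=\tfrac{g''(0)-1}{2}e^{-1}n^{-1}+O(n^{-2})$; since $g''(0)-1>0$ this is $\ge c_1/n$ for $n\ge N_0$, and the finitely many remaining $n$ contribute a positive minimum. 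Combining and using $t^\alpha$ as the prefactor gives $\ge c\,t^\alpha/n$, with $c$ independent of $\alpha,t,n$.

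For the second inequality the relevant scalar object is $R_n(c_0):=g^n(c_0/n)-e^{-c_0}-\tfrac{g''(0)-1}{2n}c_0^2e^{-c_0}$, since $F_{t,n}(c_0/t)=(t/c_0)^\alpha R_n(c_0)$ with $a[g]=\tfrac{g''(0)-1}{2}$. Carrying the expansion of $n\log g(c_0/n)$ one order further, using $g\in\mathcal{B}_4$ and the fact that the cubic Taylor coefficient of $\log g$ is exactly $b[g]=\tfrac{3g''(0)+g'''(0)-2}{6}$, I would obtain
\[
R_n(c_0)=\frac{e^{-c_0}}{n^2}\,\kappa(c_0)+O(n^{-3}),\qquad
\kappa(c_0):=b[g]\,c_0^3+\frac{(g''(0)-1)^2}{8}\,c_0^4,
\]
the remainder being uniform for $c_0$ in compact subsets of $(0,\infty)$. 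Here the hypothesis $b[g]\ne0$ enters: for small $c_0$ one has $\kappa(c_0)=c_0^3\big(b[g]+O(c_0)\big)\ne0$, so I fix $c_0$ within a small interval on which $\kappa$ stays bounded away from $0$ and (using that each $R_n$ is real-analytic with leading small-argument term $b[g]c_0^3e^{-c_0}/n^2\ne0$, hence not identically zero) which avoids the finitely many zeros contributed by the small $n<N_0$. On such $c_0$ the expansion gives $|R_n(c_0)|\ge c'/n^2$ for every $n$, and feeding this into the spectral bound yields $\|A^{-\alpha}(\cdots)\|\ge(t/c_0)^\alpha|R_n(c_0)|\ge c\,t^\alpha/n^2$.

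The main obstacle is the second inequality, and within it two points. The quantitative one is extracting the genuine $n^{-2}$ term: the operator estimate \eqref{RowA0++} only gives the matching upper bound of order $n^{-3/2}$, so the lower bound forces one to compute the next coefficient in the expansion of $\exp(n\log g(c_0/n))$ and to verify it is governed by $b[g]$ (this is where the structure of \eqref{34} and the identity for $b[g]$ are used). The bookkeeping one is uniformity in $n$: unlike the first inequality, $R_n$ has no sign, so one must rule out $R_n(c_0)=0$ for the finitely many small $n$, handled by the analyticity-plus-avoidance argument above together with the uniformity of the remainder over the chosen interval. A minor technical point, already implicit in Proposition~\ref{Opt1}, is justifying the spectral inclusion $F_{t,n}(r)\in\sigma(F_{t,n}(A))$ for the regularizable function $F_{t,n}$ rather than for a genuine element of $\Wip(\C_+)$.
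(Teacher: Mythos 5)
Your proposal is correct, and its skeleton --- reduce both inequalities to scalar lower bounds at points of $\R_+\subset\sigma(A)$ and lift them to operator norms via Theorem~\ref{spmapping}, evaluating at $\lambda$ proportional to $1/t$ so that $\lambda^{-\alpha}$ produces the factor $t^\alpha$ --- is exactly the paper's. For the first inequality the two arguments are essentially identical (the paper obtains $g^n(1/n)-e^{-1}\ge c/n$ by citing the scalar case of Corollary~\ref{1Th+C} rather than expanding $n\log g(1/n)$, which is equivalent). Where you genuinely diverge is the second-order scalar bound. Your route is asymptotic in $n$: the expansion of $\exp(n\log g(c_0/n))$ (whose coefficient $\kappa(c_0)=b[g]c_0^3+\tfrac{(g''(0)-1)^2}{8}c_0^4$ is indeed correct, $b[g]$ being the cubic Taylor coefficient of $\log g$) yields $|R_n(c_0)|\ge c'/n^2$ only for $n\ge N_0$, and you must dispose of the finitely many $n<N_0$ by the analyticity/avoidance argument; that step is sound and non-circular, since $R_n(z)\sim b[g]z^3/n^2$ as $z\to 0^+$ shows $R_n\not\equiv 0$, while $N_0$ depends only on the chosen compact interval. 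The paper instead argues non-asymptotically: applying the identity \eqref{34} to $g_n$ at a scalar point $s$ gives
\[
|g^n(s/n)-e^{-s}-a[g]n^{-1}s^2e^{-s}|\ \ge\ |b[g_n]|\,s^3e^{-s}-\tfrac{1}{2}s^4\,d_0[g_n]
\]
for \emph{every} $s>0$ and \emph{every} $n$, and the exact scalings $b[g_n]=b[g]/n^2$ and $d_0[g_n]\le (g''''(0)-1)/n^2$ from Corollary~\ref{Corgn} make the right-hand side at least $n^{-2}s^3\bigl(|b[g]|e^{-s}-s\,g''''(0)\bigr)$, hence at least $\tfrac{|b[g]|}{2e}\,s^3n^{-2}$ once $s\le s_0=\min\bigl(1,|b[g]|/(2e\,g''''(0))\bigr)$. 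So the paper's version needs no case analysis in $n$ and produces an explicit constant and an explicit admissible $s_0$; yours avoids the integral-representation machinery of Theorem~\ref{1Th+} at the price of the extra bookkeeping you correctly identified. Two minor points, equally implicit in the paper's own proof: strict positivity $g^n(1/n)>e^{-1}$ for the finitely many small $n$ is not literally Proposition~\ref{exp} (which gives only $\ge$) but follows from Lemma~\ref{T1Dop}, since $g\not\equiv e^{-z}$ forces $G\ge0$, $G\not\equiv0$, whence $g(z)>e^{-z}$ for all $z>0$; and the functions you call $F_{t,n}$ actually lie in $\Wip(\C_+)$ (for the first difference this is Corollary~\ref{CintC}; for the second it takes a short extra argument that neither you nor the paper spell out), so Theorem~\ref{spmapping} applies verbatim rather than in an extended-calculus form.
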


\begin{proof}
Let $\alpha\in [0,1]$ be fixed.
Employing Corollary \ref{1Th+C} (for scalar functions) with  $s=1/t$
we conclude that  there exists $c>0$ such that
\[
|g^n(1/n)-e^{-1}|\ge cn^{-1}
\]
for all $n \in \N.$
Then, by Theorem \ref{spmapping},
we have
\[
\|A^{-\alpha}(g^n(tA/n)-e^{-tA})\|\ge
\sup_{\lambda \in \R_{+}}\,|\lambda^{-\alpha}(g^n(\lambda t/n)-e^{-\lambda t})|
\ge c  t^{\alpha}n^{-1},
\]
for all $t >0$.
% and a constant $c >0.$

Next, if $g\in \mathcal{B}_4$ and $b[g] \neq 0$,  then
by \eqref{RowA01+},
 \begin{align*}
|g^n(s/n)-e^{-s}
-a[g] n^{-1}s^2 e^{-s}|
\ge& |b[g_n]|s^4 e^{-s}-\frac{s^4}{2}d_0[g_n]\\
\ge&  n^{-2}s^3 \left(|b[g]|e^{-s}
-s g''''(0)\right), \qquad s >0.
\end{align*}
If
$$ s_0 = \min\left(1,\frac{|b[g]|}{2e g''''(0)}\right),$$
and $s \in (0,s_0],$ then
 \[
|g^n(s/n)-e^{-s}-a[g]n^{-1}s^2 e^{-s}|
\ge
 \frac{|b[g]|}{2e}\frac{s^3}{n^2}.
\]
So, by Theorem \ref{spmapping},
 we infer that
\begin{align*}
\|A^{-\alpha}(g^n(tA/n)-e^{-tA})\|\ge&
\sup_{\lambda\in \R_{+}}\,|\lambda^{-\alpha}(g^n(\lambda t/n)-
e^{-\lambda t}
-a[g]n^{-1} s^2 e^{-s})|\\
\ge& |t^\alpha s^{-\alpha}(g^n(s/n)-e^{-s}
-a[g]n^{-1} s^2 e^{-s})|\\
\ge& \frac{|b[g]|}{2e}\frac{s^{3-\alpha}t^\alpha }{n^2},
\end{align*}
where in the second line we have chosen  $\lambda=s/t$
 for  $s\in (0,s_0].$ This finishes the proof.
 \end{proof}
Similarly to the situation of Proposition \ref{Opt1},  it is straightforward to provide concrete examples of $A$'s satisfying
the assumptions of Proposition \ref{Opt2}, e.g. using  multiplication operators on $L^2(\mathbb R_+).$

\begin{remark}\label{BfM}
If $g\in \mathcal{B}_3$ is of the form
$g(z)=e^{-\varphi(z)}$, $\varphi\in \mathcal{BF}$,
then
$
b[g]=-\varphi'''(0)/6,
$
and $b[g] \neq 0$ if $g(z)\not=e^{-z}.$
On the other hand, if
\[
g_\alpha(z):=\frac{e^{-\alpha z}+e^{-(2-\alpha)z}}{2}, \qquad \alpha\in [0,1],
\]
then  $g_\alpha \in \mathcal{B}_3,$  $b[g_\alpha]=0,$
and $g_\alpha\in L^1(\R_{+})$ for  $\alpha\in (0,1]$.
\end{remark}

We finish the paper by illustrating the sharpness of our results in a particular situation of Euler's approximation formula.
Let  $g(z)=(1+z)^{-1}$. Note that $g\in \mathcal B_2,$  $g''(0)=2$ and $g \in L^2(\R_+).$
%and $(g_n)_{n \ge 1}$ is our standard approximation sequence.
Now  Theorem \ref{ThA}, Corollary \ref{ThAxC} and
\eqref{calpha}
imply the following  first and second order Euler's approximation
formulas with rates  for sectorially bounded holomorphic $C_0$-semigroups.

\begin{thm}\label{Euler}
Let $-A$ be the generator of a sectorially bounded holomorphic
$C_0$-semigroup $(e^{-tA})_{t\ge 0}$ on $X.$
 Then for all $\alpha\in [0,1]$, $t>0$, and $n\in\N$,
\begin{equation}\label{b10E}
\left\|\left(1+\frac{t}{n}A\right)^{-n}x - e^{-tA}x \right\|\le
M_{2-\alpha} r_{\alpha,n} t^\alpha\|A^\alpha x\|,\qquad x\in \dom(A^\alpha),
\end{equation}
where
\[
r_{\alpha,n}=\frac{1}{2n}+\frac{1-2\alpha}{12n^2},\quad \alpha\in [0,1/2),\quad
r_{\alpha,n}=\frac{1}{2n},\quad \alpha\in [1/2,1],\quad n\in \N.
\]
Moreover, there exists an absolute constant $C>0$ such that for all
$\alpha\in [0,1]$, $t>0$, and $n\in\N,$
\begin{align}
\label{cggA0}
&\left\|\left(1+\frac{t}{n}A\right)^{-n}x - e^{-tA}x
-\frac{t^2A^2x}{2n}e^{-tA}x
\right\|\\
\le&
C[M_{3-\alpha}
+M_{4-\alpha}]\frac{t^\alpha}{n^2} \|A^\alpha x\|, \quad x\in \dom(A^\alpha).\notag
\end{align}
\end{thm}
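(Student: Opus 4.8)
The plan is to specialize the two general holomorphic-semigroup results already established, namely Theorem \ref{ThA} and Corollary \ref{ThAxC}, to the single function $g(z)=(1+z)^{-1}$, and then to substitute the explicit evaluation of $c_\alpha[g_n]$ recorded in \eqref{calpha}. First I would collect the elementary facts about $g$: its representing measure is $\nu(ds)=e^{-s}\,ds$, so $g\in\mathcal{CM}$ with $g^{(k)}(0)=(-1)^k k!$; in particular $g(0)=1$, $g'(0)=-1$, $g''(0)=2$, and $|g'''(0)|,\,g''''(0)<\infty$. Moreover $\int_0^\infty(1+z)^{-2}\,dz<\infty$ gives $g\in L^2$, whence $g\in\mathcal{B}_4\cap L^2\subset\mathcal{B}_{2,\infty}$. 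Thus $g$ satisfies the hypotheses of both Theorem \ref{ThA} and Corollary \ref{ThAxC}.

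For the first-order estimate \eqref{b10E}, I would invoke Theorem \ref{ThA} (applied, by time-rescaling, to $g_n$ and the generator $tA$, whose moment constants $M_\beta$ coincide with those of $A$), which yields for each $n\in\N$
\[
\|g^n(tA/n)x-e^{-tA}x\|\le M_{2-\alpha}\,c_\alpha[g_n]\,t^\alpha\|A^\alpha x\|,\qquad x\in\dom(A^\alpha),
\]
where $g_n(z)=g^n(z/n)\in\mathcal{B}_{2,\infty}$. It then only remains to bound $c_\alpha[g_n]$ by $r_{\alpha,n}$, and this is exactly where \eqref{calpha} enters: it gives $c_\alpha[g_n]\le\frac{1}{2n}+(1-2\alpha)\frac{\theta_n}{12n^2}$ with $\theta_n\in(0,1)$. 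The case split in the definition of $r_{\alpha,n}$ is dictated by the sign of $1-2\alpha$: for $\alpha\in[0,1/2)$ the correction term is positive and bounded by $\frac{1-2\alpha}{12n^2}$, while for $\alpha\in[1/2,1]$ it is nonpositive and may simply be discarded. This gives $c_\alpha[g_n]\le r_{\alpha,n}$ and hence \eqref{b10E}.

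For the second-order estimate \eqref{cggA0}, I would apply Corollary \ref{ThAxC} to the same $g\in\mathcal{B}_4\cap L^2$. There $a[g]=(g''(0)-1)/2=\tfrac12$, so the correction term $a[g]\,n^{-1}t^2A^2e^{-tA}x$ becomes $\frac{t^2A^2x}{2n}e^{-tA}x$, matching the left-hand side of \eqref{cggA0}. Since $g$ is now a fixed function, the quantity $C(g)$ produced by Corollary \ref{ThAxC} is an absolute constant $C$, and the resulting bound $C[M_{3-\alpha}+M_{4-\alpha}]\,t^\alpha n^{-2}\|A^\alpha x\|$ is precisely the asserted right-hand side.

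Since the substantive analytic work — the HP-calculus estimate underlying Theorem \ref{ThA}, the power-integrability bounds feeding Corollary \ref{ThAxC}, and the Gamma-function computation behind \eqref{Lim1}–\eqref{calpha} — has already been carried out, this proof is essentially a verification-and-substitution argument, and there is no genuine obstacle. The only points requiring care are keeping the rescaling consistent (applying the general estimates to $g_n$ and to the rescaled generator $tA$) and checking that the sharp constant $r_{\alpha,n}$, including its case distinction, emerges exactly from the two-sided control of $c_0[g_n]$ and $c_1[g_n]$ via $\theta_n\in(0,1)$ in \eqref{psi1}.
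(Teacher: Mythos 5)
Your proposal is correct and follows essentially the same route as the paper: the paper obtains Theorem \ref{Euler} precisely by specializing Theorem \ref{ThA} and Corollary \ref{ThAxC} to $g(z)=(1+z)^{-1}$ and invoking the bound \eqref{calpha} on $c_\alpha[g_n]$. The details you supply (rescaling to $g_n$ and $tA$, the sign split on $1-2\alpha$ coming from $\theta_n\in(0,1)$, and the verification that $g\in\mathcal{B}_4\cap L^2\subset\mathcal{B}_{2,\infty}$ with $a[g]=1/2$) are exactly what the paper leaves implicit.
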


Let us illustrate the sharpness of Theorem \ref{Euler}.
%First note that an easy calculation shows that
%\begin{align}
%(1+t/n)^{-n}- e^{-t}=&
%\frac{t^2}{2n}e^{-t}
%+\mbox{O}(n^{-2}),\quad n\to\infty,\label{V0}\\
%(1+t/n)^{-n}- e^{-t}=&
%\frac{t^2}{2n}e^{-t}
%+t\mbox{O}(n^{-2}),\quad n\to\infty,\label{V1}
%\end{align}
 %$\alpha=0$ and $\alpha=1,$
Observe that by Taylor series expansion, for fixed $t>0,$
\begin{align}
\frac{(1+t/n)^{-n}-e^{-t}}{1/n}=e^{-t}\left[ \frac{t^2}{2n} - \frac{t^3}{3n^2} + {\rm O}(1/n^3) \right], \qquad n \to \infty. \label{e1}
\end{align}
Apply Theorem  \ref{Euler} to the scalar function $e^{-t}$ when $\alpha$ is either $0$ or $1.$
If $\alpha=0$ then uniformly in $t\ge 0:$
\begin{align}\label{N0}
|(1+t/n)^{-n}- e^{-t}|
\le& \left(\sup_{s>0}\,s^2e^{-s}\right)\,
\left(\frac{1}{2n}+\mbox{O}(n^{-2})\right)\\
=&4e^{-2}\left(\frac{1}{2n}+\mbox{O}(n^{-2})\right), \quad n \to \infty,\notag
\end{align}
and if $\alpha=1$ then
\begin{align}\label{N1}
|(1+t/n)^{-n}- e^{-t}|
\le& t\left(\sup_{s>0}\,se^{-s}\right)\,
\left(\frac{1}{2n}+\mbox{O}(n^{-2})\right)\\
=&e^{-1}t\left(\frac{1}{2n}+\mbox{O}(n^{-2})\right), \quad n \to \infty.\notag
\end{align}

Thus, the abstract estimate (\ref{N0})  agrees with its numerical counterpart \eqref{e1} (where \eqref{N0} is a uniform version of \eqref{N1}).
% (\ref{V0}) and (\ref{V1}), respectively.

\section{Appendix 1: Derivatives}

\begin{prop}\label{simpleN}
If $g\in \mathcal{B}_2$ and $g_n(t)=g^n(t/n), t\ge 0,$ then
\[
g_n(0)=1,\quad
g_n'(0)=-1,\quad
g_n''(0)=1+\frac{g''(0)-1}{n};
\]
if $g\in \mathcal{B}_3,$ then
\[
g_n'''(0)=\frac{1}{n^2}(-(n-1)(n-2)
-3(n-1)g''(0)+
g'''(0));
\]
and if $g\in \mathcal{B}_4,$ then
\begin{align*}
g_n''''(0)=&\frac{1}{n^3}((n-1)(n-2)(n-3)
+6(n-1)(n-2)g''(0)\\
+&3(n-1)(g''(0))^2
-4(n-1)g'''(0)
+g''''(0)).
\end{align*}
\end{prop}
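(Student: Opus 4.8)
The plan is to reduce the whole statement to differentiating a single power. Set $h(t):=g(t/n)$, so that $g_n=h^n$ and, on $(0,\infty)$, the chain rule gives $h^{(j)}(t)=n^{-j}g^{(j)}(t/n)$ for every $j$; in particular $g_n(0)=h(0)^n=1$ and $g_n'(0)=n\,h(0)^{n-1}h'(0)=-1$ are immediate. Differentiating the power $h^n$ by the product and chain rules $k$ times (equivalently, applying the higher-order chain rule of Fa\`a di Bruno to $x\mapsto x^n$) expresses $g_n^{(k)}$ on $(0,\infty)$ as a finite sum
\[
g_n^{(k)}=\sum_{r} c_r\, h^{\,n-r}\prod_j \bigl(h^{(j)}\bigr)^{m_j},
\qquad \sum_j m_j=r,\quad \sum_j j\,m_j=k,
\]
in which each coefficient $c_r$ is a combinatorial constant times the falling factorial $n(n-1)\cdots(n-r+1)$. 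For $k=2$ this reads $g_n''=n(n-1)h^{n-2}(h')^2+n\,h^{n-1}h''$; the same mechanical differentiation produces, at third order, the coefficients $3n(n-1)$ and $n(n-1)(n-2)$, and at fourth order $6n(n-1)(n-2)$, $3n(n-1)$, $4n(n-1)$ and $n(n-1)(n-2)(n-3)$.

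Next I would pass to the limit $t\to0+$. Since $g\in\mathcal{B}_k$ guarantees that each $g^{(j)}(0)=\lim_{z\to0+}g^{(j)}(z)$ is finite for $0\le j\le k$, the above expression for $g_n^{(k)}(t)$ has a finite limit as $t\to0+$, which, in accordance with the convention $g_n^{(k)}(0):=\lim_{t\to0+}g_n^{(k)}(t)$ adopted in the paper, is the claimed value. Substituting $h(0)=1$ and $h^{(j)}(0)=n^{-j}g^{(j)}(0)$, each summand becomes a constant times $n(n-1)\cdots(n-r+1)\cdot n^{-k}\prod_j\bigl(g^{(j)}(0)\bigr)^{m_j}$; because $\sum_j j\,m_j=k$ supplies the factor $n^{-k}$ while the falling factorial always carries one factor $n$, exactly one power of $n$ cancels, leaving every term over the common denominator $n^{k-1}$. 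Collecting the numerators then yields precisely the displayed polynomials in $g''(0)$, $g'''(0)$, $g''''(0)$.

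The argument is a bounded, purely algebraic computation, so the only genuine obstacle is bookkeeping the combinatorial coefficients $c_r$ and the powers of $n$ without error. To guard against slips I would run the check $g(z)=e^{-z}$, for which $g_n(t)=e^{-t}$ for every $n$ and hence $g_n^{(k)}(0)$ must equal $\tfrac{d^k}{dt^k}e^{-t}\big|_{t=0}=(-1)^k$; substituting $g''(0)=1$, $g'''(0)=-1$, $g''''(0)=1$ should make each numerator collapse to $(-1)^k n^{k-1}$, confirming both the coefficients and the overall factors $n^{-1}$, $n^{-2}$, $n^{-3}$ for $k=2,3,4$.
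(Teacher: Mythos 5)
Your proposal is correct and follows essentially the same route as the paper: the paper's proof also just writes out the derivatives of $g^n(t/n)$ via the product/chain rule (with exactly the coefficients $n(n-1)$, $3n(n-1)$, $n(n-1)(n-2)$, $6n(n-1)(n-2)$, $3n(n-1)$, $4n(n-1)$, $n(n-1)(n-2)(n-3)$ you list) and then evaluates at $t\to 0+$ using $g(0)=1$, $g'(0)=-1$. Your Fa\`a di Bruno bookkeeping of the powers of $n$ and the consistency check with $g(z)=e^{-z}$ are sound additions, but they do not change the substance of the argument.
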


\begin{proof}
The statement follows from the next explicit calculations:
\begin{align*}
g_n'(t)=&g^{n-1}(t/n)g'(t/n),\\
g_n''(t)=&((n-1)(g'(t/n))^2+
g(t/n)g''(t/n))\frac{g^{n-2}(t/n)}{n},\\
g_n'''(t)=&((n-1)(n-2)(g'(t/n))^3
+3(n-1)g(t/n)g'(t/n)g''(t/n)\\
+&
g^2(t/n)g'''(t/n))\frac{g^{n-3}(t/n)}{n^2},\\
g_n''''(t)=&((n-1)(n-2)(n-3)(g'(t/n))^4\\
+&6(n-1)(n-2)g(t/n)(g'(t/n))^2g''(t/n)\\
+3&(n-1)g^2(t/n)(g''(t/n))^2
+4(n-1)g^2(t/n)g'(t/n)g'''(t/n)\\
+&g^3(t/n)g''''(t/n))\frac{g^{n-4}(t/n)}{n^3}.
\end{align*}
\end{proof}

\begin{cor}\label{Corgn}
Let $g\in \mathcal{B}_3$ and the functionals $b$ and $d_0$ are given by \eqref{defab2} and \eqref{defd01}. respectively.
Then
\begin{equation}\label{CalA1}
b[g_n]=\frac{b[g]}{n^2},\qquad
b[g]=\frac{-2+3g''(0)+g'''(0)}{6},
\end{equation}
and, if $g\in \mathcal{B}_4$,
\begin{equation}\label{CalA3}
d_0[g_n]
=\frac{(g''(0)-1)^2}{4n^2}+\frac{-6+12g''(0)-3(g''(0))^2+4g'''(0)+g''''(0)}{12 n^3}.
\end{equation}
Moreover,
\begin{equation}\label{CalA2}
|b[g_n]|
 \le \frac{g''''(0)-1}{n^2},\qquad n\in \N,
\end{equation}
and
\begin{equation}\label{CalA4}
d_0[g_n]
\le \frac{g''''(0)-1}{n^2},\qquad n\in \N.
\end{equation}

If $g\in \mathcal{B}_4\cap L^k$ for some $k\in\N$ and $d_1$ is defined by \eqref{d1F},
then
there exists $C(g)>0$ such that
\begin{equation}\label{CalA5}
d_1[g_n]\le C(g) n^{-2},\qquad n\in \N.
\end{equation}
\end{cor}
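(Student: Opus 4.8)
The plan is to handle the five assertions of Corollary \ref{Corgn} in two groups: the exact evaluations \eqref{CalA1} and \eqref{CalA3}, which are pure bookkeeping, and the three bounds \eqref{CalA2}, \eqref{CalA4}, \eqref{CalA5}, which rest on elementary moment inequalities and on the asymptotics of Theorem \ref{Asymp1}.

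First I would prove \eqref{CalA1} and \eqref{CalA3}. Recall from Section \ref{functions} that $b[g]=\frac{-2+3g''(0)+g'''(0)}{6}$ and $d_0[g]=\frac{-3+6g''(0)+4g'''(0)+g''''(0)}{12}$. Substituting the explicit expressions for $g_n''(0),g_n'''(0),g_n''''(0)$ from Proposition \ref{simpleN} and collecting powers of $1/n$ gives the claims: for \eqref{CalA1} everything except $b[g]/n^2$ cancels, while for \eqref{CalA3} the $n^{-2}$ and $n^{-3}$ contributions separate exactly as stated. This step is routine but lengthy algebra, and I would carry it out once and reuse it.

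For the bounds \eqref{CalA2} and \eqref{CalA4} I would pass to the representing measure. Writing $g\sim\nu$ and $m_k=\int_0^\infty s^k\,\nu(ds)$ (so that $m_0=m_1=1$ and $\nu$ is a probability measure of mean $1$), a short computation of the Taylor coefficients of $\Delta_2^g$ yields $b[g]=\frac16\int_0^\infty(1-s)^3\,\nu(ds)$ and $d_0[g]=\frac1{12}\int_0^\infty(1-s)^4\,\nu(ds)$, alongside $g''(0)-1=\int_0^\infty(1-s)^2\,\nu(ds)$ (this is \eqref{secD}) and $g''''(0)-1=\int_0^\infty(s^4-1)\,\nu(ds)$. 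Since $g_n\sim\nu_n$, the law of the mean $\frac1n(S_1+\cdots+S_n)$ of i.i.d.\ copies of $S\sim\nu$, the central moments of $\nu_n$ are explicit; writing $\mu_2=g''(0)-1$ and $\mu_4=12\,d_0[g]$ one finds $d_0[g_n]=\frac{\mu_4}{12n^3}+\frac{(n-1)\mu_2^2}{4n^3}$, which together with \eqref{CalA1} makes the $n^{-2}$ rates transparent. Using $n-1\le n$ and $|1-s|^3\le\frac12[(1-s)^2+(1-s)^4]$, both \eqref{CalA2} and \eqref{CalA4} then reduce to linear moment inequalities of the form $\sum_{k}a_k m_k\ge0$ (after removing the quadratic term $m_2^2$ in \eqref{CalA4} via Cauchy--Schwarz, $m_2^2\le m_4$). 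Each such inequality I would verify by exhibiting a nonnegative polynomial $(s-1)^2(c_2s^2+c_1s+c_0)$ with $c_2s^2+c_1s+c_0>0$ (negative discriminant) whose integral against $\nu$ is the left-hand side; concretely $(s-1)^2(11s^2+26s+34)$ handles \eqref{CalA2} and $(s-1)^2(2s^2+5s+8)$ handles \eqref{CalA4}. The forced double zero at $s=1$ reflects equality at $\nu=\delta_1$, i.e.\ $g(z)=e^{-z}$.

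Finally, \eqref{CalA5} follows cleanly once the identity $d_1[g]=c_0[g]-c_1[g]-b[g]$ (established just before the corollary) is applied to $g_n$: by Theorem \ref{Asymp1} the leading terms $\frac{g''(0)-1}{2n}$ of $c_0[g_n]$ and $c_1[g_n]$ coincide and cancel, leaving $c_0[g_n]-c_1[g_n]=O(n^{-2})$, while $b[g_n]=b[g]n^{-2}$ by \eqref{CalA1}; since $d_1[g_n]\ge0$ (positivity of $G$), this gives $d_1[g_n]\le C(g)n^{-2}$. I expect the main obstacle to be pinning down the \emph{exact} constant $g''''(0)-1$ in \eqref{CalA2} and \eqref{CalA4}: naive Cauchy--Schwarz estimates overshoot by a constant factor, so the crux is locating the precise nonnegative polynomials above, the cancellation of leading $1/n$ terms in \eqref{CalA5} being otherwise immediate from the cited asymptotics.
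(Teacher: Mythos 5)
Your proposal is correct, and for the two bounds \eqref{CalA2} and \eqref{CalA4} it takes a genuinely different route from the paper. The paper obtains these bounds by combining the exact formulas \eqref{CalA1} and \eqref{CalA3} with the classical log-convexity inequalities $[g^{(k+1)}(z)]^2\le g^{(k)}(z)g^{(k+2)}(z)$ for completely monotone functions (cited from \cite{Mitr}), which at $z=0$ give $1\le g''(0)\le (g''''(0))^{1/2}\le g''''(0)$ and $|g'''(0)|\le (g''(0)g''''(0))^{1/2}\le g''''(0)$; a short arithmetic estimate starting from \eqref{CalA1} and \eqref{CalA3} then yields both bounds. You instead pass to the representing probability measure $\nu$ (which has mean $1$), identify $6\,b[g]$ and $12\,d_0[g]$ as the third and fourth central moments of $\nu$, exploit the i.i.d.\ sample-mean structure of the measure representing $g_n$, and certify the resulting raw-moment inequalities by explicit nonnegative factorizations: I checked that $(s-1)^2(11s^2+26s+34)=11s^4+4s^3-7s^2-42s+34$ integrates (using $m_0=m_1=1$) to exactly $11m_4+4m_3-7m_2-8$, that $(s-1)^2(2s^2+5s+8)=2s^4+s^3-11s+8$ integrates to $2m_4+m_3-3$, that both quadratic factors have negative discriminant, and that these two inequalities do suffice after your reductions via $|1-s|^3\le\frac12[(1-s)^2+(1-s)^4]$ and $m_2^2\le m_4$. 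So the argument is complete. The paper's route buys brevity at the cost of invoking an external inequality; yours is self-contained and conceptually transparent (the forced double root at $s=1$ exhibits the equality case $\nu=\delta_1$, i.e.\ $g=e^{-z}$), at the cost of having to produce the certificates, and your central-moment bookkeeping also re-derives \eqref{CalA1} and \eqref{CalA3} with less algebra than Proposition \ref{simpleN}. For \eqref{CalA5} the two arguments coincide: the identity $d_1[g]=c_0[g]-c_1[g]-b[g]$ applied to $g_n$, Theorem \ref{Asymp1} for the cancellation of the $1/(2n)$ terms, and an $O(n^{-2})$ bound on $b[g_n]$ (the paper quotes \eqref{CalA2} where you quote \eqref{CalA1}; either works). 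Note that the paper's own sentence at this point contains a typo, writing ``$d_0[g]=-b[g]c_1[g]+c_2[g]$'' for this identity, which your version correctly repairs; also, positivity of $d_1[g_n]$ is not actually needed for the upper bound, since the triangle inequality already gives $|d_1[g_n]|\le C(g)n^{-2}$.
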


\begin{proof}
First note that the formulas
(\ref{CalA1}) are (\ref{CalA3}) are direct consequences of
Proposition \ref{simpleN}.
To prove \eqref{CalA4} and \eqref{CalA5}, first recall that if  $g\in \mathcal{CM},$ then
\[
[g^{(k+1)}(z)]^2\le g^{(k)}(z)g^{(k+2)}(z),\qquad z>0, \quad k \in \N\cup \{0\},
\]
see \cite[Ch. XIII]{Mitr}. Thus, if in addition $g\in \mathcal{B}_3,$ then
\[
1\le g''(0)\le (g''(0))^2\le |g'''(0)|.
\]
Moreover,
for $g\in \mathcal{B}_4$, one has
\[
 1\le g''(0)\le %(g(0)g''''(0))^{1/2}
(g''''(0))^{1/2}\le g''''(0),\qquad
|g'''(0)|\le (g''(0)g''''(0))^{1/2}\le g''''(0),
\]
%Thus, if $g\in \mathcal{B}_4$,
hence for every $n \in \N,$
\[
|b[g_n]|
\le \frac{3(g''(0)-1)+(g'''(0)+1)}{6n^2}
\le \frac{4(g''''(0)-1)}{6n^2}\le \frac{(g''''(0)-1)}{n^2},
\]
and
\begin{align*}
 d_0[g_n]\le&
\frac{(g''(0)-1)^2}{4n^2}+\frac{-3+6g''(0)+4g'''(0)+g''''(0)}{12n^3}\\
\le& \frac{(g''''(0)-1)}{4n^2}+\frac{6(g''(0)-1)+(g''''(0)-1)}{12n^3}\\
\le& \frac{5(g''''(0)-1)}{6n^2}\\
\le& \frac{(g''(0)-1)}{n^2}, \qquad n \in \N.
\end{align*}

Finally, (\ref{CalA5}) follows from
$d_0[g]=-b[g]c_1[g]+c_2[g],$ (\ref{CalA2}) and
Theorem \ref{Asymp1}.
\end{proof}

\section{Appendix 2: Proof of Theorem \ref{Asymp1}}

Since for each $n \in \N,$ the mapping $[0,1] \ni \alpha \to c_\alpha[g_n]$ is continuous,
it suffices to prove \eqref{ContAs} only for $\alpha\in (0,1)$.

Fix
$\alpha\in (0,1)$. Then for each $n \in \N,$
\begin{align*}
c_\alpha[g_n]
=&-\frac{1}{\alpha\Gamma(2-\alpha)}\int_0^\infty (g_n(z)-e^{-z})
d z^{-\alpha}\\
=&\frac{1}{\alpha\Gamma(2-\alpha)}
\int_0^\infty \frac{g_n'(z)}{z^\alpha}\,dz+
\frac{1}{\alpha\Gamma(2-\alpha)}
\int_0^\infty e^{-z}z^{-\alpha}\,dz.
\end{align*}
Hence
\begin{equation}\label{cgnM}
c_\alpha[g_n]
=n^{1-\alpha}\frac{R_\alpha(n)}{\alpha\Gamma(2-\alpha)}
+\frac{1}{\alpha(1-\alpha)},
\end{equation}
where
\[
R_{\alpha}(n):=\int_0^\infty g^{n-1}(z)g'(z)z^{-\alpha}\,dz, \qquad n \in \N.
\]
Define
\[
f_\alpha(z):=\frac{z^\alpha(1-\alpha a(1-g(z))}{(1-g(z))^\alpha},\qquad a=\frac{g''(0)}{2},
\]
and write
\[
R_{\alpha}(n)=R_{0,\alpha}(n)+R_{1,\alpha}(n), \qquad n \in \N,
\]
where
\begin{align*}
R_{0,\alpha}(n):=\int_0^\infty g^{n-1}(z)g'(z)
z^{-\alpha}
f_\alpha(z)\,dz,
\end{align*}
and
\begin{align*}
R_{1,\alpha}(n):=\int_0^\infty g^{n-1}(z)g'(z)
z^{-\alpha}
[1-f_\alpha(z)]\,dz.
\end{align*}

We will estimate $R_{0,\alpha}$ and $R_{1,\alpha}$ separately.
To bound $R_{0,\alpha}(n)$, letting
 $s=g(z)\in (0,1)$, $z>0,$
and using \eqref{Class}, we obtain
\begin{align*}
R_{0,\alpha}(n)=&-\int_0^1 s^{n-1}
\frac{(1-\alpha a(1-s))}{(1-s)^\alpha}
\,ds
\\
=&-\frac{\Gamma(n)\Gamma(1-\alpha)}{\Gamma(n+1-\alpha)}
+\alpha a
\frac{\Gamma(n)\Gamma(2-\alpha)}{\Gamma(n+2-\alpha)}.
\end{align*}
Thus, employing \eqref{gamma1}, we have
\begin{align*}
Q_\alpha(n):=&\frac{n^{1-\alpha}}{\alpha\Gamma(2-\alpha)}R_{0,\alpha}(n)
+\frac{1}{\alpha(1-\alpha)}\\
=&\frac{n^{1-\alpha}}{\alpha\Gamma(2-\alpha)}
\left[-\frac{\Gamma(n)\Gamma(1-\alpha)}{\Gamma(n+1-\alpha)}
+\alpha a
\frac{\Gamma(n)\Gamma(2-\alpha)}{\Gamma(n+2-\alpha)}\right]
+\frac{1}{\alpha(1-\alpha)}\\
=&-
\frac{n^{1-\alpha}\Gamma(n)}{\alpha(1-\alpha)\Gamma(n+1-\alpha)}
+a\frac{n^{1-\alpha}\Gamma(n)}{\Gamma(n+2-\alpha)}
+\frac{1}{\alpha(1-\alpha)},
\end{align*}
so that
\begin{equation}\label{unifGT}
Q_\alpha(n)=-\frac{1}{2n}
+\frac{a}{n}
+\mbox{O}(n^{-2})=
\frac{g''(0)-1}{2n}+\mbox{O}(n^{-2}),\qquad n\to\infty,
\end{equation}
uniformly in  $\alpha\in (0,1)$

Let us now consider $R_{1,\alpha}(n).$
Note that in view of  \eqref{cgnM} and \eqref{unifGT},
if there exists $C(g)>0$ such that
\begin{equation}\label{forproof}
|R_{1,\alpha}(n)|\le
\frac{C(g)\alpha }{n^{3-\alpha}}, \qquad n \in \N,
\end{equation}
then \eqref{ContAs} follows. So let us proceed with the proof of \eqref{forproof}.
Let
\[
m(z):=\frac{1-g(z)}{z},\qquad z>0.
\]
Then
\begin{equation}\label{d00}
f_\alpha(z)=
m^{-\alpha}(z)\left[1-\alpha a zm(z)\right],
\end{equation}
and
\begin{equation}\label{d01}
f'_\alpha(z)
=-\alpha m^{-1-\alpha}(z)\left[
(1+(1-\alpha) a zm(z))m'(z)
+ a m^2(z)
\right].
\end{equation}
Since $g\in \mathcal{B}_2,$ we have  $m \in \mathcal{BM},$
\begin{align*}
m(0)=1,\quad  m(1)\le m(z)\le 1,&\qquad z\in [0,1],\\
(1-g(1))/z\le m(z)\le 1/z,& \qquad z\ge 1,
\end{align*}
and
\[
m'(0)=-a, \qquad |m'(z)|\le a,\qquad  z\ge 0.
\]
Therefore,  $f_\alpha \in C^2[0,\infty)$,
$
f_\alpha(0)=1$, and $f'_\alpha(0)=0.$
Moreover, by \eqref{d00} and \eqref{d01}, there exists $C_1(g)>0$ such that
\begin{equation}\label{SimEs}
|1-f_\alpha(z)|\le C_1(g)z^2,\qquad
|f'_\alpha(z)|\le C_1(g)\alpha z,\qquad z>0.
\end{equation}
Now, integrating by parts,
\begin{align*}
R_{1,\alpha}(n)=&\frac{1}{n}\int_0^\infty
z^{-\alpha}
(1-f_\alpha(z))\,d g^n(z)\\
=&\frac{1}{n}\int_0^\infty
g^n(z)\left[
\alpha z^{-1-\alpha}
(1-f_\alpha(z))
+z^{-\alpha}f'_\alpha(z)\right]\,dz.
\end{align*}
Hence, using \eqref{SimEs}, we  conclude that
\[
|R_{1,\alpha}(n)|
\le C_1(g)\frac{\alpha}{n}\int_0^\infty
g^n(z)z^{1-\alpha}\,dz,
\]
and Lemma \ref{nnk} implies \eqref{forproof}.
\section{Acknowledgments}
The authors would like to thank Satbir Singh Malhi, Reinhard Stahn,  Lasse Vuursteen
and two anonymous referees for useful remarks and suggestions
that significantly improved our presentation.

\end{document}